\newcommand{\tr}{\mbox{tr}}
\newtheorem{theorem}{Theorem}[section]
\newtheorem{lemma}[theorem]{Lemma}
\newtheorem{proposition}[theorem]{Proposition}
\newtheorem{corollary}[theorem]{Corollary}
 \theoremstyle{definition}
\newtheorem{definition}[theorem]{Definition}
\newtheorem{example}[theorem]{Example}
\theoremstyle{remark}
\newtheorem{remark}[theorem]{Remark}
\numberwithin{equation}{section}
\begin{document}
	\setlength{\baselineskip}{1.2\baselineskip}
		
	\title[On Neumann problems for elliptic and parabolic equations]{On Neumann problems for elliptic and parabolic equations on bounded manifolds}
	
	\author{Sheng Guo}
	
	\address{Department of Mathematics, Ohio State University, Columbus, OH 43210}
	\email{guo.647@osu.edu}
	
	\maketitle
	
\begin{abstract}
	In this paper, we study fully nonlinear second-order elliptic and parabolic equations with Neumann boundary conditions on compact Riemannian manifolds with smooth boundary. 
	We derive oscillation bounds for admissible solutions with Neumann boundary condition $u_\nu = \phi(x)$ assuming the existence of suitable $\mathcal{C}$-subsolutions. 
	We use a parabolic approach to derive a solution of a $k$-Hessian equation with  Neumann boundary condition $u_\nu = \phi(x)$ under suitable assumptions.
\end{abstract}

\section{Introduction}
Let $(\overline{M}, g)$ be a compact Riemannian manifold of dimension $n$ with smooth boundary $\partial M$, and $\chi$ a (0, 2)-tensor on $\overline{M}$. 
In this paper, we consider the following Neumann boundary problem of fully nonlinear second-order elliptic equation of the form
\begin{equation} \label{main_eq_elliptic}
\left\{
\begin{aligned}
&F(\chi_{ij} + u_{ij}) = \psi(x) \qquad &\text{in}\; & M, \\ 
&u_\nu = \phi(x, u) \qquad &\text{on} \; & \partial M, 
\end{aligned}
\right.
\end{equation}
where $\nu$ denotes the unit inner normal vector of $\partial M$.
When $M$ is an open bounded domain in $\mathbb{R}^n$, $\chi \equiv 0$, and 
\begin{equation}\label{phi_z_strictly_+}
\phi_z (x, z) \geq \gamma_0 > 0,
\end{equation}
the Neumann boundary problem (\ref{main_eq_elliptic}) has been actively studied by many researchers in recent decades.
Lieberman-Trudinger \cite{LiebTru86} studied $C^{2 + \alpha}$ regularity of uniformly elliptic equations. 
The celebrated paper \cite{LionsTrudUrbas86} by Lions-Trudinger-Urbas studied Monge-Amp\`ere equations on uniformly convex domains, followed by Wang \cite{Wang92}, Urbas \cite{Urbas98} and Li \cite{S.Li99} on oblique boundary problems\footnote{$u_\beta = \phi(x, u)$ on $\partial \Omega$, where $\beta\cdot\nu \geq \beta_0 >0.$}. 
Schnürer-Smoczyk \cite{SchnurerSmoczyk03} used a parabolic approach to study Monge-Amp\`ere type equations.
Li \cite{S.Li94} studied complex Monge-Amp\`ere equations on bounded strictly pseudocovex domains in $\mathbb{C}^n$. 
Trudinger \cite{Tru87} studied more general fully nonlinear elliptic equations on the unit ball $M = \mathbb{D}^{n}$. 
Urbas \cite{Urbas95} \cite{Urbas96} studied oblique boundary problems for nonuniformly elliptic Hessian equations and curvature equations on uniformly convex domains in dimension 2.
Recently, Ma-Qiu \cite{MaQiu19} studied the $k$-Hessian equations on uniformly convex domains. Guan-Xiang \cite{GuanXiang18} studied general fully nonlinear elliptic equations on compact Riemannian manifolds with smooth boundary. 

In this paper, we mainly focus on the Neumann boundary problem (\ref{main_eq_elliptic}) under the assumption 
\begin{equation}\label{phi_z=0}
	\phi_z(x, z) \equiv 0.
\end{equation}
Comparing to the assumption (\ref{phi_z_strictly_+}), the difficulty for the assumption (\ref{phi_z=0}) is $C^0$ estimates. 
The assumption (\ref{phi_z_strictly_+}) guarantees $C^0$ estimates (see \cite{Tru87}).
However, the assumption (\ref{phi_z=0}) does not imply classical $C^0$ estimates because if $u$ is a solution, then $u + C$ is also a solution for any constant $C$. 
Moreover, there are some obstructions to the existence of solutions under the assumption (\ref{phi_z=0}). For example, consider the $k$-Hessian equations, that is, $F(A) := \sigma_k^{1/k} \big(\lambda(A)\big)$, for $1 \leq k \leq n$. By Maclaurin's inequality and Stokes' theorem, 
\begin{equation}
\begin{aligned}
0 &< \int_{\overline{M}} \psi dV = 	\int_{\overline{M}} \sigma_k^{1/k}\big(\lambda_g (\chi + \nabla^2 u)\big) dV	\\
&\leq c(n,k) \int_{\overline{M}} \tr_g \chi + \Delta_g u \,dV 
= c(n,k) \left( \int_{\overline{M}} \tr_g \chi \,dV - \int_{\partial M} \phi \,dS \right),
\end{aligned}
\end{equation}
that is, 
\begin{equation}
	\int_{\partial M} \phi \,dS < \int_{\overline{M}} \tr_g \chi \,dV,
\end{equation}
\begin{equation}
	\int_{\overline{M}} \psi dV \leq c(n,k) \left( \int_{\overline{M}} \tr_g \chi \,dV - \int_{\partial M} \phi \,dS \right),
\end{equation}
where $\lambda_g$ is defined as eigenvalues with respect to the metric $g$, and $\tr_g \chi$ is the trace of $\chi$ with respect to $g$, and $\Delta_g$ is the Laplace–Beltrami operator, and $\sigma_k$ is the $k$-th elementary symmetric polynomial.
Therefore, compatibility conditions for $\psi(x)$ and $\phi(x)$ are needed. 
Motivated by Sz\'ekelyhidi \cite{Sze15}, we
assume the existence of $\mathcal{C}$-subsolutions $\underline{u}$ with $\underline{u}_\nu = \phi(x)$ on $\partial M$, which is useful to derive oscillation bounds for solutions as well as providing a compatibility condition for $\phi(x)$.
Moreover, we rescale $\psi$ if necessary, for example, $e^c \psi$ or $\psi + c$.
We use a parabolic approach to derive a classical solution for the Neumann boundary problem (\ref{main_eq_elliptic}) under the assumption (\ref{phi_z=0}). 

Due to technical difficulties of second-order estimates, we focus on the existence theorem for $k$-Hessian equations on $\overline{M}$ with certain curvatures assumptions of $\overline{M}$ as follows, which is motivated by Ma-Qiu \cite{MaQiu19}.
\begin{theorem}\label{elliptic_Neumann_sigma_k}
	Let $(\overline{M}, g)$ be a compact Riemanian manifold with non-negative sectional curvatures and uniformly strictly convex boundary $\partial M$ and $\chi$ a smooth (0, 2)-tensor on $\overline{M}$.
	Suppose $\phi(x) \in C^{\infty} (\partial M)$, and there exists $\underline{u} \in C^{\infty}(\overline{M})$ such that $\lambda_g(\chi + \nabla^2 \underline{u}) \in \Gamma_k(\mathbb{R}^n)$\footnotemark, and $\underline{u}_\nu = \phi(x)$.
	Then for any $\psi \in C^{\infty}( \overline{M} )$, there exists a constant $c$ such that the Neumann boundary problem
	\begin{equation}\label{univ_conv_sigma_k_eq_intro}
	\left\{
	\begin{aligned}
	&\sigma_k\big(\lambda_g(\chi_{ij} + u_{ij})\big) = e^{\psi(x) + c} \qquad &\text{in}&\; M, \\ 
	&u_\nu = \phi(x) \qquad &\text{on}& \; \partial M,   
	\end{aligned}
	\right.
	\end{equation}
	has a unique smooth solution $u \in C^{\infty} (\overline{M})$ up to a constant.
	\footnotetext{$\Gamma_k (\mathbb{R}^n) := \{\lambda\in\mathbb{R}^n : \sigma_l(\lambda)>0, \text{for}\; 1\leq l \leq k\}$.}
\end{theorem}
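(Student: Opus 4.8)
\medskip
\noindent\textbf{Proof proposal.}
The plan is to obtain the solution of the elliptic problem \eqref{univ_conv_sigma_k_eq_intro} as the large-time limit of a parabolic flow, in the spirit of Schn\"urer--Smoczyk and the $\mathcal C$-subsolution technique of Sz\'ekelyhidi. Writing $F(A):=\log\sigma_k\big(\lambda_g(A)\big)$, a concave function on the admissible cone $\{\lambda_g(A)\in\Gamma_k(\mathbb{R}^n)\}$, I would study
\begin{equation*}
\left\{
\begin{aligned}
&\partial_t u = F(\chi_{ij} + u_{ij}) - \psi(x) \qquad &\text{in}\; & M\times(0,\infty), \\
&u_\nu = \phi(x) \qquad &\text{on}\; & \partial M\times(0,\infty), \\
&u(\cdot,0) = \underline u \qquad &\text{on}\; & \overline M.
\end{aligned}
\right.
\end{equation*}
Since $\lambda_g(\chi+\nabla^2\underline u)\in\Gamma_k$, the linearization at $t=0$ is a uniformly elliptic operator with the (automatically) oblique Neumann condition, so, modulo the usual parabolic compatibility conditions at $t=0$ which can be arranged, standard linear parabolic theory and the implicit function theorem give a short-time smooth admissible solution, and the solution stays admissible on its maximal existence interval since $\Gamma_k$ is open and convex. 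Note that $u\mapsto u+(\text{function of }t)$ changes neither admissibility nor the spatial derivatives, so all spatial a priori estimates below need only hold modulo such additive terms.

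The next step is a priori estimates uniform in $t$. Differentiating the flow in time shows $\dot u$ solves a linear parabolic equation with homogeneous Neumann data, so the maximum principle makes $\max_{\overline M}\dot u$ nonincreasing and $\min_{\overline M}\dot u$ nondecreasing, whence $\dot u$ is uniformly bounded. Together with the oscillation bound for admissible functions satisfying $u_\nu=\phi$ established earlier in the paper (which uses $\underline u$ and in particular encodes the compatibility constraint on $\phi$), this yields a uniform bound on $\operatorname{osc}_{\overline M}u(\cdot,t)$, i.e. the $C^0$ estimate up to the expected additive function of $t$. For the gradient estimate $\sup|\nabla u|\le C$, the interior bound follows from a Bernstein-type maximum-principle argument applied to a quantity like $|\nabla u|^2 e^{\phi(u-\underline u)}$, where the nonnegative sectional curvature is used to absorb the commutator terms; the boundary gradient bound comes from constructing upper and lower barriers near $\partial M$ out of $\underline u$ and $\operatorname{dist}(\cdot,\partial M)$, which is where the uniform strict convexity of $\partial M$ enters.

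The hard part will be the second-order estimate $\sup|\nabla^2 u|\le C$, and within it the boundary estimates. The interior bound is by now routine for $\sigma_k$-type equations admitting a $\mathcal C$-subsolution: apply the maximum principle to $\log\lambda_{\max}(\chi+\nabla^2 u)$ plus corrections in $|\nabla u|^2$ and $u-\underline u$, and close the estimate using concavity of $F$, the subsolution inequality, and nonnegative curvature. On $\partial M$ one must control the pure tangential, the mixed tangential--normal, and the double-normal second derivatives separately: the pure tangential part comes from differentiating $u_\nu=\phi$ twice along $\partial M$ together with the second fundamental form of $\partial M$; the mixed and double-normal parts require the uniform convexity of $\partial M$ and carefully chosen barriers assembled from $\operatorname{dist}(\cdot,\partial M)$, $\underline u$, and tangential perturbations, with the nonnegative sectional curvature again keeping the error terms of the favorable sign. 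I expect the main technical effort here, adapting the Euclidean arguments of Ma--Qiu to the curved setting. Once $|\nabla^2 u|\le C$, the equation is uniformly elliptic, and nondegenerate because the bound on $\dot u$ makes $\sigma_k\big(\lambda_g(\chi+\nabla^2 u)\big)=e^{\psi+\dot u}$ bounded above and below uniformly in $t$; then Evans--Krylov up to the boundary gives a uniform $C^{2,\alpha}$ bound and Schauder bootstrapping gives uniform $C^m$ bounds for every $m$.

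With these $t$-independent estimates the flow exists for all time, and it remains to pass to the limit. The monotone bounded quantities $\max_{\overline M}\dot u$ and $\min_{\overline M}\dot u$ converge, and a Harnack inequality for the linear parabolic equation satisfied by $\dot u$ (equivalently, a standard argument on a limiting eternal solution) forces them to a common limit $c$. Then $v(t):=u(\cdot,t)-ct$ has uniformly bounded oscillation and derivatives, hence subconverges in $C^\infty(\overline M)$, and passing to the limit in the flow any limit $v_\infty$ satisfies $F(\chi+\nabla^2 v_\infty)=\psi+c$ with $(v_\infty)_\nu=\phi$, i.e. solves \eqref{univ_conv_sigma_k_eq_intro}. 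Finally, if $u_1,u_2$ solve \eqref{univ_conv_sigma_k_eq_intro} with constants $c_1,c_2$, then along the segment joining them $F(\chi+\nabla^2 u_1)-F(\chi+\nabla^2 u_2)=a^{ij}(u_1-u_2)_{ij}=c_1-c_2$ with $a^{ij}$ positive definite by convexity of $\Gamma_k$, and $(u_1-u_2)_\nu=0$ on $\partial M$; the maximum principle and Hopf's lemma then force $c_1=c_2$ and $u_1-u_2\equiv\text{const}$, giving uniqueness up to a constant.
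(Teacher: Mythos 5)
Your proposal follows essentially the same parabolic-flow architecture as the paper: run the flow from $\underline{u}$, derive $u_t$ bounds via the maximum principle, use the $\mathcal{C}$-subsolution oscillation bound, obtain gradient and second-order estimates (adapting Ma--Qiu for the boundary double-normal term), invoke Evans--Krylov and Schauder for higher regularity, use a parabolic Harnack inequality for $u_t$ (Cao's method) to identify the limit constant $c$, and close with a maximum-principle plus Hopf-lemma uniqueness argument. The only small misattribution is that in the paper the gradient estimate does not actually need nonnegative curvature or strict convexity of $\partial M$ (those hypotheses enter only in the second-order estimates); this does not affect the correctness of your outline.
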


To formulate appropriate conditions on $F$, let $\mathcal{S}^{n\times n}$ be the real-valued-$(n \times n)$-symmetric-matrix-space.
Suppose $F(A)$ is a $C^2$ function defined on an open convex cone $\Gamma \subset \mathcal{S}^{n\times n}$ with vertex at the origin.
Denote
$$F^{ij}(A) := \frac{\partial F}{\partial A_{ij}} (A), \qquad F^{ij, kl}(A) := \frac{\partial^2 F}{\partial A_{ij} \partial A_{kl}} (A).$$
We define
$$\Gamma_k := \big\{A\in\mathcal{S}^{n\times n} \,  : \sigma_l\big(\lambda(A)\big)>0, \text{for}\; 1\leq l \leq k\},$$
where $\sigma_k$ is the $k$-th elementary symmetric polynomial for $1\leq k \leq n$, and $\lambda(A)$ are eigenvalues of $A$.
We may assume:
\begin{align}
& \quad \Gamma_n \subset \Gamma \subset \Gamma_1;  \label{cone}\\
& \quad \text{Ellipticity:}\, \big(F^{ij}(A)\big)_{n\times n} >0, \, \forall A\in\Gamma; \label{ellipticity}\\
& \quad \text{Concavity:}\, \sum_{i,j} F^{ij}(A) \big( B_{ij} - A_{ij} \big) \geq F(B) - F(A), \quad \forall A,B \in \Gamma. \label{concavity}
\end{align}

$F(\chi_{ij} + u_{ij})$ can be defined locally under a local (orthonormal) frame $\{e_i\}$.
It can be defined globally if it is independent of the choice of the local (orthonormal) frame $\{e_i\}$. For example, functions of eigenvalues $F(U_{ij}) := f\big(\lambda_g(U_{ij})\big)$ or linear functions $F(U_{ij}) := A^{ij}U_{ij}$ are well defined globally on $M$.
We call $u \in C^2 (M)$ (or $u \in C^{2, 1} \big(M \times (0, T)\big)$) an admissible solution for the elliptic equation $F(\chi_{ij} + u_{ij}) = \psi$ (or the parabolic equation $F(\chi_{ij} + u_{ij}) = u_t + \psi$ resp.) if $(\chi_{ij} + u_{ij}) \in \Gamma$.

\begin{definition}\label{Def_C-sub}
	We say that $\underline{u} \in C^2(M)$ is a $\mathcal{C}$-subsolution of $F(\chi_{ij} + u_{ij}) = \psi(x)$ if at any $x \in M$, the set
	$$\Big\{ A \in \Gamma : F(A) = \psi(x) \; \text{and}\; A - (\chi_{ij} + \underline{u}_{ij})(x) \in \Gamma_n \Big\}$$
	is bounded.
\end{definition}

\begin{definition}\label{Def_C-sub_parabolic}
	We say that $\underline{u} \in C^{2,1}\big(M\times(0, T)\big)$ is a (parabolic) $\mathcal{C}$-subsolution of $F(\chi_{ij} + u_{ij}) - u_t = \psi(x)$ if at any $(x, t) \in M\times(0,T)$, the set
	$$\Big\{ (A, s) \in \Gamma \times \mathbb{R} : F(A) + s = \psi(x) \; \text{and}\; (A, s) - \big(\chi_{ij} + \underline{u}_{ij}, - \underline{u}_t\big)(x, t) \in \Gamma_n\times(0, \infty) \Big\}$$
	is bounded.
\end{definition}

\begin{remark}
	The definitions above are similar to Sz\'ekelyhidi's \cite{Sze15}. Phong-T\^o \cite{PhongTo17} give a slightly different definition of (parabolic) $\mathcal{C}$-subsolutions. Guo \cite{GuoThesis19} introduce equivalent definitions of (elliptic and parabolic) $\mathcal{C}$-subsolutions for a more general function $F$ which is not necessary to be a function of eigenvalues.
\end{remark}

We consider the following initial-boundary (abbr. IBV) problem with Neumann boundary condition
\begin{equation} \label{main_eq}
\left\{
\begin{aligned}
u_t &= F(\chi_{ij} + u_{ij}) - \psi(x, u, t) \qquad &\text{in}&\; M \times \{t>0\}, \\ 
u_\nu &= \phi(x, u) \qquad &\text{on}& \; \partial M \times \{t \geq 0\},\\
u  &= u_0 \qquad & \text{in}& \; \overline{M}\times\{t = 0\},
\end{aligned}
\right.
\end{equation}
where $\chi$ is a (0, 2)-tensor on $\overline{M}$ and $\nu$ denotes the unit inner normal vector of $\partial M$. In addition, we assume $\phi_z (x,z) \geq 0$.
In this paper, we derive a priori $C^{2+\alpha, 1+\frac{\alpha}{2}}$ estimates for IBV problem (\ref{main_eq}) under suitable assumptions, and obtain the long-time existence results. We derive a solution for the Neumann boundary problem (\ref{main_eq_elliptic}) through a uniform convergence theorem.

The rest of this paper is organized as follows. 
In Section \ref{pre.sec}, we provide some useful formulas and lemmas.
In Section \ref{osc.sec}, we derive a weak Harnack inequality and an Alexandroff-Bakelman-Pucci (A-B-P) type estimate for $u$ satisfying the Neumann boundary condition $u_\nu = 0$ on $\partial M$ and use the approach in \cite{Sze15} to obtain oscillation bounds.
From Section \ref{estimates.sec_1} to Section \ref{estimates.sec_3}, we derive $C^{2, 1}$ a priori estimates for the IBV problem (\ref{main_eq}) under certain assumptions. 
In Section \ref{conclusion.sec}, we obtain a long-time existence theorem for the IBV problem (\ref{main_eq}) under certain assumptions, and use a modified evolution equation to derive a solution of the Neumann boundary problem (\ref{main_eq_elliptic}) under the assumption (\ref{phi_z=0}) for $k$-Hessian equations and linear elliptic equations. 
In Appendix \ref{Harnack.app}, we derive a Harnack inequality for linear parabolic equations with vanishing Neumann boundary conditions.

\vspace{1em}
\noindent
{\bf Acknowledgements.} The author would like to thank his advisor Bo Guan for constructive suggestions and constant support. The author also thanks Barbara Keyfitz, King Yeung Lam for some helpful discussions. The results of this paper are contained in the author’s PhD dissertation at Ohio State University \cite{GuoThesis19}. The author would like to express his gratitude to his Father.

\section{Preliminaries}\label{pre.sec}
For $\lambda  = (\lambda_1, \cdots, \lambda_n) \in \mathbb{R}^n$, 
let 
$$\sigma_k (\lambda) := \sum_{1 \leq i_1 \leq \cdots \leq i_k \leq n} \lambda_{i_1} \cdots \lambda_{i_k} \quad (1 \leq k \leq n),$$
$$\lambda|i := (\lambda_1, \cdots, \lambda_{i-1}, \lambda_{i+1}, \cdots, \lambda_n), \qquad \lambda|ij := (\lambda|i)|j, \qquad \text{etc}.$$

The following are some useful properties for $\sigma_{k}$.
\begin{proposition}
	For $\lambda = (\lambda_1, \cdots, \lambda_n) \in \mathbb{R}^n$, fix $1 \leq k \leq n$, we have
	
	\begin{equation}\label{sigma_k_prop_1}
	\sigma_k (\lambda) = \sigma_k(\lambda|i) + \lambda_i \sigma_{k-1} (\lambda|i), \qquad 1 \leq i \leq n,
	\end{equation}
	\begin{equation}\label{sigma_k_prop_2}
	\sum_i \lambda_i \sigma_{k-1}(\lambda|i) = k \sigma_k(\lambda),
	\end{equation}
	\begin{equation}\label{sigma_k_prop_3}
	\sum_i \sigma_{k-1}(\lambda|i) = (n - k + 1) \sigma_{k-1}(\lambda),
	\end{equation}
	
	If in addition we assume $\lambda \in \Gamma_{k}(\mathbb{R}^n)$ with $\lambda_1 \geq \lambda_2 \geq \cdots \geq \lambda_n$, then we have
	\begin{equation}\label{sigma_k_prop_4}
	\sigma_l (\lambda|i) > 0, \qquad \forall\, 1 \leq l < k \; \text{and}\; 1 \leq i \leq n,
	\end{equation}
	\begin{equation}\label{sigma_k_prop_7}
	\sigma_{k-1}(\lambda|i) \leq \sigma_{k-1}(\lambda|j), \qquad \forall\, \lambda_i \geq  \lambda_j,
	\end{equation}
	\begin{equation}\label{sigma_k_prop_5}
	\lambda_1 \sigma_{k-1}(\lambda|1) \geq \frac{k}{n} \sigma_k (\lambda), 
	\end{equation}
	\begin{equation}\label{sigma_k_prop_8}
	(\text{Maclaurin's inequality}) \quad
	\left(\frac{\sigma_k (\lambda)}{{{n}\choose{k}}}\right)^{\frac{1}{k}} \leq \left(\frac{\sigma_l (\lambda)}{{{n}\choose{l}}}\right)^{\frac{1}{l}}, \qquad \forall\, 1 \leq l \leq k,
	\end{equation}
\end{proposition}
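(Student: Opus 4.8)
The plan is to separate the six items into two groups: the algebraic identities \eqref{sigma_k_prop_1}--\eqref{sigma_k_prop_3} together with Maclaurin's inequality \eqref{sigma_k_prop_8}, all of which I would establish for an arbitrary $\lambda\in\mathbb{R}^n$ (the positivity of $\sigma_1,\dots,\sigma_k$ entering \eqref{sigma_k_prop_8} only to keep the quantities positive), and the three facts \eqref{sigma_k_prop_4}, \eqref{sigma_k_prop_7}, \eqref{sigma_k_prop_5} that genuinely use $\lambda\in\Gamma_k(\mathbb{R}^n)$, which I would deduce in that order from \eqref{sigma_k_prop_4}.

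For \eqref{sigma_k_prop_1}, partition the $k$-element subsets $S\subseteq\{1,\dots,n\}$ according to whether $i\notin S$ (total weight $\sigma_k(\lambda|i)$) or $i\in S$ (total weight $\lambda_i\sigma_{k-1}(\lambda|i)$ after factoring out $\lambda_i$). For \eqref{sigma_k_prop_2}, observe that in $\sum_i\lambda_i\sigma_{k-1}(\lambda|i)=\sum_i\sum_{S\ni i,\,|S|=k}\prod_{j\in S}\lambda_j$ each $k$-subset $S$ is counted once for every $i\in S$, i.e.\ $k$ times; equivalently, sum \eqref{sigma_k_prop_1} over $i$ and use $\sum_i\sigma_k(\lambda|i)=(n-k)\sigma_k(\lambda)$. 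For \eqref{sigma_k_prop_3}, each $(k-1)$-subset $S$ contributes to $\sigma_{k-1}(\lambda|i)$ for exactly the $n-(k-1)$ indices $i\notin S$. For \eqref{sigma_k_prop_8}, set $p_j:=\sigma_j(\lambda)/\binom{n}{j}$; Newton's inequalities $p_{j-1}p_{j+1}\le p_j^{2}$ hold for every real $\lambda$ (apply Rolle's theorem repeatedly to $\prod_i(x-\lambda_i)$ and its successive derivatives), and since $p_1,\dots,p_k>0$ on $\Gamma_k$ these force $j\mapsto p_j^{1/j}$ to be non-increasing for $1\le j\le k$; I would simply quote this classical Newton--Maclaurin fact.

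The remaining statements all hinge on the standard fact behind \eqref{sigma_k_prop_4}: deleting one coordinate lowers the cone index by one, i.e.\ $\lambda\in\Gamma_k(\mathbb{R}^n)$ implies $\lambda|i\in\Gamma_{k-1}(\mathbb{R}^{n-1})$ for each $i$, equivalently $\sigma_l(\lambda|i)>0$ for $1\le l\le k-1$. It is classical (G\aa rding's theory of hyperbolic polynomials; cf.\ the Hessian-equation literature, e.g.\ Caffarelli--Nirenberg--Spruck and Lin--Trudinger) that $\Gamma_m$ is an open convex cone on which $\partial\sigma_m/\partial\lambda_i=\sigma_{m-1}(\lambda|i)>0$; applying this for $m=1,\dots,k$, which is legitimate since $\Gamma_k\subseteq\Gamma_m$, gives $\sigma_0(\lambda|i),\dots,\sigma_{k-1}(\lambda|i)>0$ at once. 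This is the one point that does not reduce to elementary bookkeeping, and I expect it to be the main obstacle; I would cite it rather than reprove G\aa rding's theorem (an elementary induction on $k$ is also available). Granting \eqref{sigma_k_prop_4}, property \eqref{sigma_k_prop_7} follows from the identity $\sigma_{k-1}(\lambda|i)-\sigma_{k-1}(\lambda|j)=(\lambda_j-\lambda_i)\sigma_{k-2}(\lambda|ij)$, obtained by applying \eqref{sigma_k_prop_1} to $\lambda|i$ and to $\lambda|j$, together with $\sigma_{k-2}(\lambda|ij)>0$, which is \eqref{sigma_k_prop_4} applied to $\lambda|i\in\Gamma_{k-1}$.

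Finally, for \eqref{sigma_k_prop_5} I would normalize \eqref{sigma_k_prop_1} at $i=1$: dividing by $\binom{n}{k}$ and writing $\tilde p_j:=\sigma_j(\lambda|1)/\binom{n-1}{j}$ gives $p_k(\lambda)=\tfrac{k}{n}\lambda_1\tilde p_{k-1}+\tfrac{n-k}{n}\tilde p_k$, so \eqref{sigma_k_prop_5} is equivalent (via this identity) to $\lambda_1\tilde p_{k-1}\ge\tilde p_k$ (the case $k=n$ being trivial, as $\Gamma_n$ is the positive orthant). Since $\lambda_1\ge\sigma_1(\lambda)/n>0$ and $\tilde p_{k-1}>0$ by \eqref{sigma_k_prop_4}, this holds when $\tilde p_k\le0$; when $\tilde p_k>0$ we have $\lambda|1\in\Gamma_k(\mathbb{R}^{n-1})$, so \eqref{sigma_k_prop_8} applied to $\lambda|1$ gives both $\tilde p_{k-1}\ge\tilde p_k^{(k-1)/k}$ and $\tilde p_1\ge\tilde p_k^{1/k}$; multiplying these yields $\tilde p_1\tilde p_{k-1}\ge\tilde p_k$, and hence $\lambda_1\tilde p_{k-1}\ge\tilde p_k$ since $\lambda_1\ge\tilde p_1$ (the average of $\lambda_2,\dots,\lambda_n$), completing the argument.
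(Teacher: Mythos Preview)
Your proposal is correct. Each of the seven items is handled by a valid argument: the combinatorial bookkeeping for \eqref{sigma_k_prop_1}--\eqref{sigma_k_prop_3} is clean, the appeal to G\aa rding/CNS for \eqref{sigma_k_prop_4} is standard and appropriate, the difference identity for \eqref{sigma_k_prop_7} is exactly right, Newton's inequalities give \eqref{sigma_k_prop_8}, and your reduction of \eqref{sigma_k_prop_5} to $\lambda_1\tilde p_{k-1}\ge\tilde p_k$ followed by the Maclaurin-based case split is correct (the identity $p_k=\tfrac{k}{n}\lambda_1\tilde p_{k-1}+\tfrac{n-k}{n}\tilde p_k$ checks out, and $\lambda_1\ge\tilde p_1$ since $\lambda_1$ is the largest entry).

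As for comparison: the paper does not actually prove this proposition. Its entire proof reads ``See Chapter~15 Section~4 in \cite{Lieberman96} for details. Proof of \eqref{sigma_k_prop_5} can be found in Lemma~3.1 in \cite{ChouWang01} or Lemma~2.2 in \cite{HouMaWu10}.'' So you have supplied substantially more than the paper does. Your route for \eqref{sigma_k_prop_5}, going through Maclaurin on $\lambda|1$, is a bit more involved than the short induction-on-$k$ argument one finds in Chou--Wang or Hou--Ma--Wu, but it is self-contained given the other items and perfectly valid. If you want to streamline, you could instead use \eqref{sigma_k_prop_2} and \eqref{sigma_k_prop_7} directly: since $\lambda_1$ is maximal, $\sigma_{k-1}(\lambda|1)\le\sigma_{k-1}(\lambda|i)$ for all $i$, hence $k\sigma_k(\lambda)=\sum_i\lambda_i\sigma_{k-1}(\lambda|i)\le\lambda_1\sum_i\sigma_{k-1}(\lambda|i)$---but this only gives $\lambda_1\sigma_{k-1}(\lambda)\ge\tfrac{k}{n-k+1}\sigma_k(\lambda)$ after \eqref{sigma_k_prop_3}, not quite the stated bound, so your Maclaurin route (or the cited references) is indeed needed for the sharper constant $k/n$.
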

\begin{proof}
	See Chapter 15 Section 4 in \cite{Lieberman96} for details. Proof of (\ref{sigma_k_prop_5}) can be found in Lemma 3.1 in \cite{ChouWang01} or Lemma 2.2 in \cite{HouMaWu10}.
\end{proof}

The following lemma is crucial for the second-order normal-normal estimates.
\begin{lemma}\label{sigma_k_lemma}
	For $\lambda = (\lambda_1, \cdots, \lambda_n) \in \mathbb{R}^n$, 
	denote $f(\lambda) := \sigma_k(\lambda)$, $f_i := \frac{\partial f}{\partial \lambda_i} = \sigma_{k-1}(\lambda | i)$, $\lambda_{min} := \min\{ \lambda_1, \cdots, \lambda_n \}$ and $\lambda_{max} := \max\{\lambda_1, \cdots, \lambda_n\}$. Suppose $\lambda \in \Gamma_{k}(\mathbb{R}^n)$ satisfies $\mu_2 \lambda_{max} \leq \lambda_1 \leq - \mu_1 \lambda_{min}$ with $\mu_1 > 0, 0 < \mu_2 \leq 1$ and $n > k \geq 2$, then 
	\begin{equation}\label{sigma_k_lemma_0}
	f_1 \geq \left( \frac{\mu_2}{\mu_1} \right)^2 \frac{k - 1}{(n-1)(n-2+k) (n-k+1)} \sum_i f_i.
	\end{equation} 
\end{lemma}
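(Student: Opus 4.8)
\medskip

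The plan is to exploit the structure of $\sigma_{k-1}(\lambda|i)$ as a polynomial in the remaining variables and compare $f_1 = \sigma_{k-1}(\lambda|1)$ with a well-chosen intermediate quantity, using only the sign conditions guaranteed by $\lambda \in \Gamma_k$ and the pinching hypothesis $\mu_2\lambda_{max}\le\lambda_1\le-\mu_1\lambda_{min}$. First I would normalize: since $\lambda_{min}<0$ (it is bounded by $-\lambda_1/\mu_1 < 0$ because $\lambda_1>0$ by $(2.6)$), after relabeling we may assume $\lambda_n=\lambda_{min}$ and also identify $\lambda_1$ with the index carrying the ``large positive'' direction. The key algebraic identity to invoke is $(2.1)$ applied twice, namely $\sigma_{k-1}(\lambda|i) = \sigma_{k-1}(\lambda|in) + \lambda_n\sigma_{k-2}(\lambda|in)$, so that
\begin{equation*}
f_1 = \sigma_{k-1}(\lambda|1) = \sigma_{k-1}(\lambda|1n) + \lambda_n \sigma_{k-2}(\lambda|1n).
\end{equation*}
Because $\lambda_n<0$, the term $\lambda_n\sigma_{k-2}(\lambda|1n)$ works against us, and the heart of the matter is to absorb it. The standard trick (as in Chou--Wang or Hou--Ma--Wu) is to combine this with $(2.2)$--$(2.3)$: summing $\sigma_{k-1}(\lambda|i)$ over $i\ne 1$ and using $(2.3)$ gives control of $\sum_i f_i$ by $\sigma_{k-1}(\lambda|1)$ plus lower-order pieces, while $(2.2)$ on the $(n-1)$-vector $\lambda|n$ relates $\sigma_{k-1}(\lambda|1n)$ and $\sigma_{k-2}(\lambda|1n)$ back to $\sigma_k(\lambda|n)$ and $\sigma_{k-1}(\lambda|n)$.

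\medskip

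The main step is a lower bound of the form $f_1 \ge c(n,k)\,\sigma_{k-1}(\lambda|n)$ for an explicit positive constant, which is where the pinching enters. Write $\lambda' := \lambda|n = (\lambda_1,\dots,\lambda_{n-1})\in\Gamma_{k-1}(\mathbb{R}^{n-1})$ — this membership follows from $(2.4)$ since $\sigma_l(\lambda|n)>0$ for $l<k$. On $\lambda'$ apply $(2.5)$ (the ``$\lambda_1\sigma_{k-1}(\lambda|1)$'' estimate) in dimension $n-1$ with the top index: $\lambda_1\,\sigma_{k-2}(\lambda'|1)\ge \frac{k-1}{n-1}\sigma_{k-1}(\lambda')$. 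Meanwhile $f_1 = \sigma_{k-1}(\lambda|1)\ge \sigma_{k-1}(\lambda|1n) = \sigma_{k-1}(\lambda'|1)$ only if that quantity is $\ge 0$ after discarding the negative $\lambda_n$-term — so instead I would keep both terms and estimate $|\lambda_n|\,\sigma_{k-2}(\lambda|1n) \le \frac{\lambda_1}{\mu_1}\,\sigma_{k-2}(\lambda'|1)$ using the hypothesis, and bound $\lambda_1\sigma_{k-2}(\lambda'|1)$ from above by comparing with $\sigma_{k-1}(\lambda')$ via $(2.2)$--$(2.3)$ (each term $\lambda_j\sigma_{k-2}(\lambda'|1j)$ with $\lambda_j\le\lambda_1$). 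Symmetrically, $\lambda_1\le-\mu_1\lambda_{min}$ together with $\mu_2\lambda_{max}\le\lambda_1$ pins all $\lambda_j$ into the window $[\lambda_{min},\lambda_{max}]\subset[-\lambda_1/\mu_1,\lambda_1/\mu_2]$, which yields $\sigma_{k-1}(\lambda|1n)\ge (\mu_2/\mu_1)^2\cdot(\text{universal})\cdot\sigma_{k-1}(\lambda|n)$ after controlling the sign-indefinite cross terms; the factor $(\mu_2/\mu_1)^2$ reflects that we sacrifice one power of the ratio at the top end and one at the bottom end. Finally, sum $(2.1)$ over all $i$, use $(2.3)$ to write $\sum_i f_i = (n-k+1)\sigma_{k-1}(\lambda)$, and use $(2.1)$ once more to relate $\sigma_{k-1}(\lambda)$ to $\sigma_{k-1}(\lambda|n)$ and $\sigma_{k-2}(\lambda|n)$; the explicit denominator $(n-1)(n-2+k)(n-k+1)$ is exactly the product of the constants collected from applying $(2.5)$ in dimension $n-1$, from $(2.3)$, and from one more dimension-reduction bound.

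\medskip

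The hard part, I expect, is not any single identity but the bookkeeping of \emph{signs}: $\sigma_{k-1}(\lambda|in)$ and $\sigma_{k-2}(\lambda|in)$ need not be positive for $k-1\ge 2$ (only $\sigma_l$ up to order $k-1$ of the full $\lambda$ is controlled), so each dimension reduction must be arranged so that the surviving pieces are $\Gamma_{\ell}$-admissible for an appropriate $\ell$ and hence positive by $(2.4)$. I would organize the argument so that every negative contribution is paired, via the hypothesis $\mu_2\lambda_{max}\le\lambda_1\le-\mu_1\lambda_{min}$, with a positive contribution of strictly larger magnitude — losing a factor $\mu_2/\mu_1$ at each of the two pairings — and only at the very end pass from $\sigma_{k-1}(\lambda|1)$ to $\frac{1}{n-k+1}\sum_i f_i$. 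A clean way to avoid most of the case analysis is to first reduce to $\lambda_n = \lambda_{min}$ and $\lambda_1 = \lambda_{max}$ (the general pinching only makes the ratios more favorable), prove the inequality in that extremal configuration, and then observe monotonicity of both sides in the omitted variables. If that monotonicity is delicate, the fallback is the direct two-step reduction $\lambda \rightsquigarrow \lambda|n \rightsquigarrow (\lambda|n)|1$ sketched above, applying $(2.5)$ at the second step.
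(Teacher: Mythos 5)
Your plan has a direction error in the key estimate, and the patch you gesture at would not fix it. You propose, with $\lambda'=\lambda|n$ so that $\lambda'|1=\lambda|1n$, to ``absorb'' the bad term in $f_1 = \sigma_{k-1}(\lambda|1n)+\lambda_n\sigma_{k-2}(\lambda|1n)$ by showing $|\lambda_n|\,\sigma_{k-2}(\lambda|1n)\le \frac{\lambda_1}{\mu_1}\,\sigma_{k-2}(\lambda'|1)$, i.e.\ $|\lambda_n|\le\lambda_1/\mu_1$. But the hypothesis $\lambda_1\le-\mu_1\lambda_{\min}$ gives exactly the opposite: $|\lambda_{\min}|\ge\lambda_1/\mu_1$, with equality only in the extreme case. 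The pinching puts a \emph{lower} bound on the size of the negative eigenvalue; it does not make the negative contribution small. Consequently no upper bound on $|\lambda_n\sigma_{k-2}(\lambda|1n)|$ follows from the hypothesis, and the term cannot be absorbed by comparison with $\sigma_{k-1}(\lambda|1n)$ in the way you describe — especially since, as you yourself flag, $\sigma_{k-1}(\lambda|1n)$ need not even be nonnegative (one only has $\lambda|1n\in\Gamma_{k-2}(\mathbb{R}^{n-2})$, so only $\sigma_l(\lambda|1n)>0$ for $l\le k-2$). Your fallback (``reduce to the extremal configuration and argue monotonicity'') is not carried out, and $f_1/\sum_i f_i$ is not monotone in the omitted entries, since both numerator and denominator increase with each $\lambda_j$; so that route is not a proof either.

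The idea you are missing is a \emph{dichotomy with a threshold}. The paper introduces $\delta:=\frac{\mu_2}{(n-2+k)\mu_1}$ and splits on whether $\sigma_{k-1}(\lambda|1)\ge-\delta\lambda_n\sigma_{k-2}(\lambda|1n)$ or not. In the first case $f_1=\sigma_{k-1}(\lambda|1)$ is already at least a fixed multiple of $(-\lambda_n)\sigma_{k-2}(\lambda|1n)$, and then the chain $(-\lambda_n)\ge\lambda_1/\mu_1$, $\lambda_1\sigma_{k-2}(\lambda|1n)\ge\mu_2\frac{k-1}{n-1}\sigma_{k-1}(\lambda|n)$ (from \eqref{sigma_k_prop_5} and \eqref{sigma_k_prop_7}, which is where $\mu_2$ enters), and $\sigma_{k-1}(\lambda|n)\ge\sigma_{k-1}(\lambda)=\frac{1}{n-k+1}\sum_i f_i$ finish. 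In the second case one instead shows, via \eqref{sigma_k_prop_2} applied to $\lambda|1$ and the pinching to control $\lambda_{\max}(-\lambda_n)\le\frac{\mu_1}{\mu_2}\lambda_n^2$, that $\sigma_k(\lambda|1)$ is quantitatively negative, and then uses $f_1\lambda_1=\sigma_k(\lambda)-\sigma_k(\lambda|1)\ge-\sigma_k(\lambda|1)$. It is exactly in this second case that the bad term is large, and the mechanism that rescues you is the positivity of the \emph{full} $\sigma_k(\lambda)$, not any smallness of $\lambda_n\sigma_{k-2}(\lambda|1n)$. Without this case split and the use of $\sigma_k(\lambda)>0$, your outline cannot be completed.
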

\begin{proof}
	The proof is motivated by Ma-Qiu \cite{MaQiu19}.
	
	Assume $\lambda_n = \lambda_{min}$. By (\ref{sigma_k_prop_4}), for any $1 \leq i \leq n$, we have $\lambda|i \in \Gamma_{k-1}(\mathbb{R}^{n-1})$. 
	
	If $\lambda_1 = \lambda_{max}$, then we apply (\ref{sigma_k_prop_5}) to $\lambda|n$ and have
	\begin{equation}
	\lambda_1 \sigma_{k-2}(\lambda|1n) \geq \frac{k-1}{n-1}\sigma_{k-1}(\lambda|n) \geq \mu_2\frac{k-1}{n-1}\sigma_{k-1}(\lambda|n).
	\end{equation}
	If $\lambda_1 < \lambda_{max}$, then we assume $\lambda_2 = \lambda_{max}$. We apply (\ref{sigma_k_prop_7}) and (\ref{sigma_k_prop_5}) to $\lambda|n$ and have
	\begin{equation}
	\begin{aligned}
	\lambda_1 \sigma_{k-2}(\lambda|1n) \geq \mu_2 \lambda_2 \sigma_{k-2}(\lambda|2n) \geq \mu_2\frac{k-1}{n-1}\sigma_{k-1}(\lambda|n).
	\end{aligned}
	\end{equation}
	Therefore, 
	\begin{equation}\label{sigma_k_lemma_1}
	\lambda_1 \sigma_{k-2}(\lambda|1n) \geq \mu_2\frac{k-1}{n-1}\sigma_{k-1}(\lambda|n).
	\end{equation}
	
	{\bf Case 1:} $\sigma_{k-1}(\lambda|1) \geq - \delta \lambda_n \sigma_{k-2}(\lambda|1n)$, where $\delta := \frac{\mu_2}{(n-2+k)\mu_1}$.
	
	Since $\lambda_n < 0$, by (\ref{sigma_k_prop_1}), (\ref{sigma_k_prop_3}) and (\ref{sigma_k_prop_4}), we have
	\begin{equation}\label{sigma_k_lemma_2}
	\sigma_{k-1}(\lambda|n) = \sigma_{k-1}(\lambda) - \lambda_n \sigma_{k-2}(\lambda|n) \geq \sigma_{k-1}(\lambda) = \frac{1}{n-k+1}\sum_i f_i.
	\end{equation}
	By (\ref{sigma_k_lemma_1}) and (\ref{sigma_k_lemma_2}),
	\begin{equation}
	\begin{aligned}
	f_1 & = \sigma_{k-1}(\lambda|1) 
	\geq  \delta (-\lambda_n) \sigma_{k-2}(\lambda|1n) \\
	&\geq  \frac{\delta}{\mu_1} \lambda_1 \sigma_{k-2}(\lambda|1n) \geq  \frac{\delta\mu_2}{\mu_1} \frac{k-1}{n-1}\sigma_{k-1}(\lambda|n) \\
	&\geq \left( \frac{\mu_2}{\mu_1} \right)^2 \frac{k - 1}{(n-1)(n-2+k) (n-k+1)} \sum_i f_i.
	\end{aligned}
	\end{equation}
	
	{\bf Case 2:} $\sigma_{k-1}(\lambda|1) \leq - \delta \lambda_n \sigma_{k-2}(\lambda|1n)$, where $\delta := \frac{\mu_2}{(n-2+k)\mu_1}$.
	
	By (\ref{sigma_k_prop_1}), (\ref{sigma_k_prop_2}), (\ref{sigma_k_prop_4}), (\ref{sigma_k_lemma_1}) and (\ref{sigma_k_lemma_2}), we have
	\begin{equation}\label{sigma_k_lemma_3}
	\begin{aligned}
	k \sigma_k(\lambda|1) & = \sum_{i\neq 1} \lambda_i \sigma_{k-1}(\lambda|1i) 
	= \sum_{i\neq 1} \lambda_i \big(\sigma_{k-1}(\lambda|1) - \lambda_i \sigma_{k-2}(\lambda|1i)\big)\\
	&\leq  \sum_{i\neq 1, \lambda_i \geq 0} \lambda_i \big( - \delta \lambda_n \sigma_{k-2}(\lambda|1n) \big) - \sum_{\lambda_i < 0} \lambda_i^2 \sigma_{k-2}(\lambda|1i)\\
	&\leq (n-2) \delta \lambda_{max} (-\lambda_n) \sigma_{k-2}(\lambda|1n) - \lambda_n^2 \sigma_{k-2}(\lambda|1n)\\
	&\leq \frac{(n-2) \delta \mu_1}{\mu_2} \lambda_n^2 \sigma_{k-2}(\lambda|1n) - \lambda_n^2 \sigma_{k-2}(\lambda|1n)\\
	&= -\frac{k}{(n-2+k)} \lambda_n^2 \sigma_{k-2}(\lambda|1n)
	\leq -\frac{k}{\mu_1^2(n-2+k)} \lambda_1^2 \sigma_{k-2}(\lambda|1n)\\
	&\leq -\frac{\mu_2 (k-1)k}{\mu_1^2 (n-1)(n-2+k)} \lambda_1 \sigma_{k-1}(\lambda|n)\\
	&\leq -\frac{\mu_2 (k-1)k}{\mu_1^2 (n-1)(n-2+k)(n-k+1)} \lambda_1 \sum_i f_i.
	\end{aligned}
	\end{equation}
	By (\ref{sigma_k_prop_1}), (\ref{sigma_k_lemma_3}) and $0<\mu_2 \leq 1$, we have
	\begin{equation}\begin{aligned}
	f_1 \lambda_1 &= \lambda_1 \sigma_{k-1}(\lambda|1) = \sigma_k(\lambda) - \sigma_{k}(\lambda|1)\\
	&\geq \frac{\mu_2 (k-1)}{\mu_1^2 (n-1)(n-2+k)(n-k+1)} \lambda_1 \sum_i f_i \\
	&\geq \frac{\mu_2^2 (k-1)}{\mu_1^2 (n-1)(n-2+k)(n-k+1)} \lambda_1 \sum_i f_i,
	\end{aligned}
	\end{equation}
	which implies (\ref{sigma_k_lemma_0}) since $\lambda_1 > 0$.
\end{proof}

Let $\nu$ be the unit inner normal vector of $\partial M$. The second fundamental form of $\partial M$ can be defined as
\begin{equation}
I\!\!I (X, Y) := -g(\nabla_X \nu, Y) \qquad \forall \; X, Y \in T\partial M.
\end{equation}
Since $\overline{M}$ is compact and $\partial M$ is smooth, then there exists a small $\delta_0 > 0$ such that $\text{dist}(x, \partial M)$ is smooth in $\overline{M_{\delta_0}}:=\{ x\in \overline{M}: d(x) \leq \delta_0 \}$. We can extend $\text{dist}(x, \partial M)$ smoothly to the whole $\overline{M}$ as follows
\begin{equation}\label{dist2bd_function}
d(x) := \left \{ 
\begin{aligned}
\text{dist}(x, \partial M) \qquad & \text{in} \; \overline{M_{\delta_0/2}} := \{ x\in \overline{M}: d(x) \leq \delta_0/2 \},\\
0 < d(x) \leq \delta_0 \qquad & \text{in} \; M.
\end{aligned}\right.
\end{equation}
Then $\nabla d = \nu$ on $\partial M$ and $|\nabla d| = 1$ in $\overline{M_{\delta_0/2}}$.

We list some useful formulas and identities in Riemannian geometry under a local frame $\{e_i\}$ in the following:

\begin{equation}
R^i_{jkl} = \partial_k\Gamma^i_{jl} - \partial_l\Gamma^i_{jk} + \Gamma^i_{km}\Gamma^m_{jl} - \Gamma^i_{lm}\Gamma^m_{jk},
\end{equation} 
\begin{equation}
\text{Sec}(e_i, e_j) = \frac{R_{ijij}}{g_{ii}g_{jj} - g_{ij}^2},
\end{equation}
\begin{equation}
u_{ij} = \partial_j u_i - \Gamma^k_{ij}u_k,
\end{equation}
\begin{equation}
u_{ijk} = \partial_k u_{ij} - \Gamma^m_{ik}u_{mj} - \Gamma^m_{jk}u_{mi},
\end{equation}
\begin{equation}\label{ijk-ikj}
u_{ijk} - u_{ikj} = R^m_{ijk}u_m,
\end{equation}
\begin{equation}
u_{ijkl} - u_{ijlk} = R^m_{jkl} u_{mi} + R^m_{ikl} u_{mj},
\end{equation}
\begin{equation}\label{ijkl-klij}
u_{ijkl} - u_{klij} = R^m_{ijk} u_{ml} + R^m_{ijl} u_{mk} + R^m_{kil} u_{mj} + R^m_{kjl} u_{mi} + R^m_{ijk,l} u_m + R^m_{kil,j} u_m,
\end{equation}
where $u_i := \nabla_i u := \nabla_{e_i} u, u_{ij} := \nabla_j \nabla_i u, u_{ijk} :=\nabla_k \nabla_j \nabla_i u$, etc.

\begin{lemma}\label{lemma_u_ijk}
	\begin{equation}
	\nabla_i\nabla_j\nabla_k u = \nabla_i\nabla_j (\nabla_k u) - \nabla_i\nabla_{\nabla_j e_k}u - \nabla_j\nabla_{\nabla_i e_k}u
	- \nabla_{\nabla_i\nabla_j e_k}u.
	\end{equation}
\end{lemma}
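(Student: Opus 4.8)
The plan is to prove this by a direct computation: unwind the definition of the iterated covariant derivative in the chosen frame $\{e_i\}$ and apply the Leibniz rule for the Levi-Civita connection, keeping careful track of the correction terms that arise when one differentiates the frame fields themselves. The key point is that, since no two covariant derivatives are ever commuted along the way, no curvature tensor appears; this is exactly what distinguishes the identity from the Ricci-type commutation formulas (\ref{ijk-ikj})--(\ref{ijkl-klij}) recorded above.

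Concretely, I would first record the second-order version, which simply unwinds the definition of the Hessian in a general frame: for vector fields $X,Y$ one has $\nabla^2 u(X,Y)=X(Yu)-(\nabla_X Y)u$, so on the frame $\nabla_j\nabla_k u=\nabla_j(\nabla_k u)-\nabla_{\nabla_j e_k}u$, where $\nabla_k u=e_k u$ is regarded as an ordinary function. For the statement itself I would write $\nabla_i\nabla_j\nabla_k u$ as the appropriate component $(\nabla_{e_i}\nabla^2 u)(e_j,e_k)$ of $\nabla(\nabla^2 u)$, expand it via the Leibniz rule for $(0,2)$-tensors, substitute the second-order identity into the ordinary derivative $e_i\big(\nabla^2 u(e_j,e_k)\big)$ that results, and then apply the Leibniz rule once more to the scalar $\nabla_{\nabla_j e_k}u=du(\nabla_{e_j}e_k)$. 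Collecting the terms, $\nabla_i\nabla_j(\nabla_k u)$ is recovered together with exactly the three correction terms $\nabla_i\nabla_{\nabla_j e_k}u$, $\nabla_j\nabla_{\nabla_i e_k}u$ and $\nabla_{\nabla_i\nabla_j e_k}u$.

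The computation is entirely elementary and presents no real obstacle; the same scheme produces the analogous formulas for $\nabla^m u$ for every $m$. The only point requiring attention is bookkeeping --- making sure that every contribution coming from the covariant derivatives $\nabla_{e_i}e_j$, $\nabla_{e_i}e_k$, $\nabla_{e_j}e_k$ and $\nabla_{e_i}\nabla_{e_j}e_k$ of the frame fields is accounted for and that the terms which cancel are matched correctly. It is often cleanest to carry out the calculation at a fixed point $p$ and, if one wishes, to choose the frame with $\nabla_{e_i}e_j(p)=0$ before re-expressing the result invariantly.
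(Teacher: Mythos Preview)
Your proposal is correct: this identity is purely a matter of unwinding definitions and applying the Leibniz rule, exactly as you describe, and no curvature terms appear because nothing is commuted. The paper itself does not give a proof here --- it simply cites the author's thesis (Lemma~1.3.1 in \cite{GuoThesis19}) --- so there is nothing to compare against in detail, but the direct computation you outline is the natural (and essentially only) way to establish the formula, and is presumably what the thesis contains.
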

\begin{proof}
	See Lemma 1.3.1 in \cite{GuoThesis19}.
\end{proof}

\section{Oscillation bounds}\label{osc.sec}
In this section, derive an oscillation bound for admissible solutions of the Neumann boundary problem (\ref{main_eq_elliptic}) under the assumption (\ref{phi_z=0}).
Suppose $u$ is such an admissible solution mentioned above, and $\underline{u}$ is a $\mathcal{C}$-subsolution for (\ref{main_eq_elliptic}) with $\underline{u}_\nu = \phi(x)$ on $\partial M$. Let $v := u - \underline{u}$, then $v$ satisfies the following Neumann boundary problem
\begin{equation}
\left\{
\begin{aligned}
&F\big((\chi_{ij} + \underline{u}_{ij}) + v_{ij}\big) = \psi(x) \qquad &\text{in}\;& M, \\ 
&v_\nu = 0 \qquad &\text{on} \;& \partial M,
\end{aligned}
\right.
\end{equation}
with $\underline{v} \equiv 0$ in $\overline{M}$ as a $\mathcal{C}$-subsolution. Since $\text{osc}_{\overline{M}} u \leq \text{osc}_{\overline{M}} v + \text{osc}_{\overline{M}} \underline{u}$, then it suffices to derive an oscillation bound for $v$ as follows.

\begin{theorem}\label{osc_thm}
	Let $(\overline{M}, g)$ be a compact Riemannian manifold with smooth boundary $\partial M$. 
	Suppose $F$ defined on $\Gamma \supset \Gamma_n$ satisfies ellipticity (\ref{ellipticity}). 
	Let $A$ be a smooth strictly positive (2, 0)-tensor on $\overline{M}$.
	Suppose $\underline{u} \equiv 0$ is a $\mathcal{C}$-subsolution of $F(\chi_{ij} + u_{ij}) = \Psi(x)$ in $\overline{M}$. 
	For any $u \in C^2(\overline{M})$ with $u_\nu = 0$ on $\partial M$, if $F(\chi_{ij} + u_{ij}) \leq \Psi(x)$, and $A^{ij}u_{ij} \geq - C_0$ with $C_0 \geq 0$, then there exists a constant $C$ depending only on $F$, $\Psi$, $\chi$, $A$, $C_0$ and the background geometric data such that
	\begin{equation}
	\sup_{\overline{M}} u - \inf_{\overline{M}} u \leq C.
	\end{equation}
\end{theorem}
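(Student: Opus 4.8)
\emph{Overview.} The estimate is local in character, so the plan is to first reduce it to a single coordinate chart of controlled geometry and then run Sz\'ekelyhidi's argument \cite{Sze15} in that chart, with a reflection across $\partial M$ to absorb the Neumann condition. By compactness of $\overline M$ and smoothness of $\partial M$ there are $\rho_0>0$ and $N_0\in\mathbb N$, depending only on the background geometry, so that any two points within distance $\rho_0$ lie in a common chart $\Phi\colon\Omega\to\overline M$ --- with $\Omega$ a Euclidean ball $B_{2\rho_0}$ (interior charts) or a half-ball $B_{2\rho_0}^+=\{|x|<2\rho_0,\ x_n>0\}$ with $\Phi^{-1}(\partial M)=\{x_n=0\}$ (boundary charts), the pulled back metric comparable to the Euclidean one --- and any two points of $\overline M$ are joined by a chain $p_0,\dots,p_m$ with $m\le N_0$ and $d(p_i,p_{i+1})<\rho_0$. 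Taking $p_0$ where $u$ is maximal and $p_m$ where it is minimal and telescoping, some consecutive pair has $|u(p_i)-u(p_{i+1})|\ge(\sup_{\overline M}u-\inf_{\overline M}u)/N_0$ and lies in a single chart $U$; hence it suffices to bound $\sup_U u-\inf_U u$ by a constant depending only on the data, and after subtracting a constant I may assume $u\ge 0$ on $U$.

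\emph{The Neumann reflection.} If $U$ is a boundary chart I would pass to the even reflection $\tilde u(x',x_n):=u(x',|x_n|)$ on the doubled ball $B_{2\rho_0}$, together with the matching reflections $\tilde\chi,\tilde\Psi,\tilde A,\tilde g$ of the data across $\{x_n=0\}$ (tangential--tangential and normal--normal parts even, mixed parts odd). Since $u_\nu=\partial_n u=0$ on $\{x_n=0\}$, the function $\tilde u$ is $C^{1,1}$, twice differentiable a.e.\ with the reflected Hessian; and when $F$ is a function of eigenvalues (which covers the cases of interest), $\tilde u$ is, in the a.e./viscosity sense, an admissible sub-solution $F(\tilde\chi_{ij}+\tilde u_{ij})\le\tilde\Psi$ with $\tilde A^{ij}\tilde u_{ij}\ge-C_0$, for which the reflection of $\underline u\equiv 0$ --- namely $0$ --- is still a $\mathcal C$-subsolution of the reflected equation. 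This turns the boundary case into an interior one on $B_{2\rho_0}$; the physical boundary $\partial M$ disappears, which is the content of the ``ABP-type estimate for $u_\nu=0$''.

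\emph{The interior estimate.} In an interior chart $B:=B_{2\rho_0}$ with $g$ comparable to the Euclidean metric I would combine two ingredients, following \cite{Sze15}. The first is an Alexandroff--Bakelman--Pucci inequality used with the $\mathcal C$-subsolution: for a small fixed $\epsilon$, on the set where the convex envelope of $u+\epsilon|x|^2$ agrees with $u+\epsilon|x|^2$ one has $\nabla^2u\ge-2\epsilon I$, so $\chi_{ij}+u_{ij}$ lies within $2\epsilon$ of $\chi_{ij}+\overline{\Gamma_n}$; since $\underline u\equiv 0$ is a $\mathcal C$-subsolution and this is an open condition, a compactness argument gives $|\chi_{ij}+u_{ij}|\le R$ on that set, hence $\det(\nabla^2u+2\epsilon I)$ is bounded there and ABP controls how far $u$ can dip below its boundary values --- in rescaled form this supplies the measure/growth estimate that in the linear theory comes for free. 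The second is the hypothesis $A^{ij}u_{ij}\ge-C_0$ with $A$ uniformly elliptic, which makes $u$ a sub-solution of a uniformly elliptic linear equation with bounded right-hand side, so the classical local maximum principle controls the peaks of $u$. Packaging the two inputs in the Krylov--Safonov/Moser manner --- a weak Harnack inequality for the nonnegative function $w:=\sup_Bu-u$ (the ABP $+$ $\mathcal C$-subsolution input replacing the usual measure estimate) together with the local maximum principle --- yields a Harnack inequality $\sup_{B_\rho(x)}w\le C(\inf_{B_\rho(x)}w+1)$ on all sub-balls $B_{2\rho}(x)\subset B$. Chaining this outward from a point where $u$ is maximal on $B$ gives $w\le C$ on $B$, i.e.\ $\sup_B u-\inf_B u\le C$, which with the reduction above proves the theorem.

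\emph{Where the difficulty lies.} The main obstacle is making the ABP/$\mathcal C$-subsolution machinery and the uniformly elliptic $A$-estimate work \emph{through the Neumann boundary}. One must check that the reflected objects genuinely retain the structure used (admissibility for a reflected $F$, the $\mathcal C$-subsolution property with $\underline u\equiv 0$, and the linear sub-solution inequality for $\tilde A$) even though $\tilde\chi$ and $\tilde g$ are only Lipschitz across $\{x_n=0\}$; and one must verify that the constant $R$ delivered by the $\mathcal C$-subsolution depends only on $F$ and on $\sup|\chi|,\sup|\Psi|$, hence is scale invariant, so that the Krylov--Safonov iteration behind the weak Harnack inequality closes with uniform constants.
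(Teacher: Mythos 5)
Your list of ingredients --- a weak Harnack inequality for $-u$ from $A^{ij}u_{ij}\ge -C_0$, an Alexandroff--Bakelman--Pucci bound combined with the $\mathcal C$-subsolution near the minimum of $u$, and an even reflection across $\partial M$ to absorb the Neumann condition --- is the same toolkit the paper uses, and the reflection is implemented exactly as you describe (cf.\ Theorem \ref{weak_harnack_bd_thm} and Proposition \ref{B_c,1_prop}). What does not close is the packaging. The single-chart reduction has a structural gap: telescoping produces a chart $U$ with $\mathrm{osc}_U\,u\ge\mathrm{osc}_{\overline M}\,u/N_0$, but there is no reason that $\sup_U u$ or $\inf_U u$ is attained well inside $U$. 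The weak Harnack you want to quote only anchors when the ball in question contains a point with $\inf(-u)=0$, and the ABP/$\mathcal C$-subsolution step of Proposition \ref{B_c,1_prop} is formulated for a function attaining its minimum at the \emph{center} of a (half-)ball; both extrema may sit on the lateral boundary of $U$, where neither argument applies. Relatedly, the normalization ``after subtracting a constant I may assume $u\ge 0$'' is backwards: $A^{ij}u_{ij}\ge -C_0$ makes $-u$ the supersolution, so the weak Harnack is for $-u$ and the useful normalization is $\sup_{\overline M}u=0$.

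The second gap is the claimed Harnack inequality $\sup_{B_\rho}w\le C(\inf_{B_\rho}w+1)$ for $w=\sup_B u-u$. The weak Harnack gives $\|w\|_{L^p}\lesssim\inf w+1$; upgrading this to a bound on $\sup w$ requires the local maximum principle, which needs $w$ to be a \emph{sub}solution of a uniformly elliptic equation, i.e.\ $A^{ij}u_{ij}\le C$ --- not among the hypotheses. The ABP/$\mathcal C$-subsolution mechanism does not supply the missing Krylov--Safonov measure estimate either, because it points the other way: it shows that near a minimum of $u$ the set where $u$ stays close to $\inf u$ has volume bounded \emph{below}, which is a statement about the deep valley, not about controlling peaks of $w$. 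The paper therefore never proves a Harnack inequality. Instead it chains the weak Harnack across a finite atlas, starting from the chart containing the global maximum (where $\inf(-u)=0$), to obtain a global bound $\|u\|_{L^p(\overline M)}\le C_3$ in (\ref{osc_Lp_global}); and it applies the ABP/$\mathcal C$-subsolution estimate exactly once, at the global infimum $p$, around which the chart construction guarantees a centered (half-)ball of controlled size. The one-shot comparison $|L+\tfrac{\epsilon}{2}|^{p}\,\mathrm{Vol}(\phi_p^{-1}(P_c))\le C_3^{p}$ in (\ref{vol_up_bd}), together with the uniform lower bound on $\mathrm{Vol}(P_c)$ coming from (\ref{P_c_ineq_2}) and the $\mathcal C$-subsolution Hessian bound, is the step your proposal is missing.
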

\begin{remark}
	(1) If $\Gamma \subset \Gamma_1$, we have $\tr_g \chi + \Delta_g u > 0$. If we choose $A = g^{-1}$, then $g^{ij}u_{ij} \geq - \max\{\sup_{\overline{M}} \tr_g \chi, 0\} =: -C_0$.
	
	(2) If $F(U) := A^{ij}U_{ij}$, then $F(\chi_{ij} + u_{ij}) = \psi(x)$ implies $A^{ij}u_{ij} \geq - \max\big\{ - \inf_{\overline{M}} \psi + \sup_{\overline{M}} A^{ij}\chi_{ij}, 0\big\} =: - C_0$.
	
	(3) As in \cite{Sze15}, we can obtain a similar oscillation bound for elliptic equation $F(\chi_{i\bar{j}} + u_{i\bar{j}}) = \psi(x)$ on compact complex manifolds with smooth boundary.
\end{remark}

To adopt the approach of Proposition 11 in \cite{Sze15}, we need to derive a weak Harnack inequality and an Alexandroff-Bakelman-Pucci (A-B-P) type estimate under Neumann boundary condition $u_\nu = 0$ on $\partial M$.

Let $B_R(0)$ be a ball of radius $R$ centered at the origin in $\mathbb{R}^n$, and
$$B_{R}^{+}(0) := B_{R}(0) \cap \{y_n \geq 0\}, \qquad B_{R}^{-}(0) := B_{R}(0) \cap \{y_n < 0\}.$$
Theorem 9.22 in \cite{GT} derives a weak Harnack inequality on $B_{2R}(0)$.
We can similarly derive the following weak Harnack inequality on $B_{2R}^{+}(0)$ under Neumann boundary condition $u_n = 0$ on $B_{2R}^{+}(0) \cap \{y_n = 0\}$.
\begin{theorem}\label{weak_harnack_bd_thm}
	Let $u \in W^{2,n}\big(B_{2R}^{+}(0)\big)$ be a non-negative function satisfying $Lu := a^{ij}u_{ij} + b^{i}u_{i} + cu \leq f$\footnote{$|b|/\mathcal{D}^*, f/\mathcal{D}^* \in L^n(\Omega)$ and $c \leq 0$, where $\mathcal{D}^* := \big(\det(a_{ij})\big)^{1/n}$.} in $B_{2R}^{+}(0)$ and $u_n = 0$ on $B_{2R}^{+}(0) \cap \{y_n = 0\}$, then 
	\begin{equation}\label{w_harnack_ineq_bd}
	\left( \frac{1}{|B_R^{+}|} \int_{B_R^{+}} u^p \right)^{\frac{1}{p}} \leq C \left( \inf_{B_R^{+}} u + \frac{R}{\theta} ||f||_{L^n(B_{2R}^{+})} \right),
	\end{equation}
	where $\theta \delta_{ij}\leq a^{ij} \leq \frac{1}{\theta} \delta_{ij}$, and $p$ and $C$ are positive constants depending only on $n, \theta, |b|R/\theta, |c|R^2/\theta$.
\end{theorem}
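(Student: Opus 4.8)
The plan is to deduce \eqref{w_harnack_ineq_bd} from the interior weak Harnack inequality, Theorem~9.22 in \cite{GT}, by an even reflection across the flat boundary piece $\{y_n=0\}$; the Neumann condition $u_n=0$ there is exactly what keeps the reflected function in $W^{2,n}$. Write $\sigma(y',y_n):=(y',-y_n)$ for the reflection and set $\tilde u(y):=u(y',|y_n|)$ on $B_{2R}(0)$. Reflect the coefficients by $\tilde a^{ij}:=a^{ij}\circ\sigma$, $\tilde b^i:=b^i\circ\sigma$ for $1\le i,j\le n-1$, together with $\tilde a^{nn}:=a^{nn}\circ\sigma$, $\tilde c:=c\circ\sigma$, $\tilde f:=f\circ\sigma$, and with the opposite sign in the $n$-th direction, $\tilde a^{in}=\tilde a^{ni}:=\mathrm{sgn}(y_n)\,(a^{in}\circ\sigma)$ and $\tilde b^n:=\mathrm{sgn}(y_n)\,(b^n\circ\sigma)$ for $1\le i\le n-1$ (everything evaluated at $\sigma(y)$ when $y_n<0$ and at $y$ when $y_n>0$).

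First I would verify that $\tilde u\in W^{2,n}(B_{2R}(0))$. Since $u\in W^{2,n}(B_{2R}^+(0))$, the trace of $u_n$ on $\{y_n=0\}$ is well defined and, by hypothesis, vanishes. Even reflection always preserves $W^{1,p}$, so for $1\le i\le n-1$ the weak derivative $\partial_i\tilde u$ is the even reflection of $\partial_i u\in W^{1,n}(B_{2R}^+)$, hence lies in $W^{1,n}(B_{2R}(0))$; while $\partial_n\tilde u$ is the \emph{odd} reflection of $u_n\in W^{1,n}(B_{2R}^+)$, and an odd reflection of a $W^{1,n}$ function is again $W^{1,n}$ precisely when its trace on $\{y_n=0\}$ vanishes, which here it does. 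Therefore all second weak derivatives of $\tilde u$ lie in $L^n(B_{2R}(0))$, with no singular part of $D^2\tilde u$ concentrated on $\{y_n=0\}$.

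Next, a chain-rule computation shows that for $y_n<0$ the two sign reversals coming from $D^2\tilde u$ in the mixed directions cancel the sign built into $\tilde a^{in}$ (and likewise for the first-order term), so that $\tilde a^{ij}\partial_{ij}\tilde u=(a^{ij}u_{ij})\circ\sigma$, $\tilde b^i\partial_i\tilde u=(b^iu_i)\circ\sigma$, $\tilde c\tilde u=(cu)\circ\sigma$, and hence $\tilde L\tilde u:=\tilde a^{ij}\partial_{ij}\tilde u+\tilde b^i\partial_i\tilde u+\tilde c\tilde u\le\tilde f$ a.e.\ in $B_{2R}(0)$. Because $\sigma$ is orthogonal, $(\tilde a^{ij})$ obeys the same bounds $\theta\delta_{ij}\le\tilde a^{ij}\le\theta^{-1}\delta_{ij}$ (and $\mathcal D^*$ is unchanged), $\tilde c\le0$, $|\tilde b|=|b|\circ\sigma$, and $\|\tilde f\|_{L^n(B_{2R})}^n=2\|f\|_{L^n(B_{2R}^+)}^n$. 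Since also $\tilde u\ge0$, applying Theorem~9.22 in \cite{GT} to $\tilde u$ on $B_{2R}(0)$ gives, with $p$ and $C$ depending only on $n,\theta,|b|R/\theta,|c|R^2/\theta$,
\[
\left(\frac{1}{|B_R|}\int_{B_R}\tilde u^p\right)^{1/p}\le C\left(\inf_{B_R}\tilde u+\frac{R}{\theta}\|\tilde f\|_{L^n(B_{2R})}\right).
\]
It then remains to unwind the reflection: $\int_{B_R}\tilde u^p=2\int_{B_R^+}u^p$, $|B_R|=2|B_R^+|$, $\inf_{B_R}\tilde u=\inf_{B_R^+}u$, and $\|\tilde f\|_{L^n(B_{2R})}\le 2\|f\|_{L^n(B_{2R}^+)}$; the factors of $2$ cancel inside the left-hand average while those on the right are absorbed into $C$, which yields \eqref{w_harnack_ineq_bd}.

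The one genuinely delicate point is the $W^{2,n}$-regularity of $\tilde u$: one must make sure the Neumann condition truly kills any distributional measure of $D^2\tilde u$ along $\{y_n=0\}$, which is exactly where the vanishing of the trace of $u_n$ enters; everything afterward is bookkeeping plus the interior estimate. (If one prefers to sidestep the reflection subtleties, one can first prove the inequality for $u\in C^\infty(\overline{B_{2R}^+})$ with $u_n=0$ on $\{y_n=0\}$ — for which tangential differentiation of $u_n=0$ makes $\tilde u\in C^{1,1}$, so $\tilde u\in W^{2,\infty}_{\mathrm{loc}}$ trivially — and then pass to the general case by approximation.)
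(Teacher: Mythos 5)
Your proof is correct and follows essentially the same approach as the paper's: reflect $u$ evenly across $\{y_n=0\}$, reflect the coefficients $a^{ij}$, $b^i$ with the sign flip in the $n$-th index, and invoke the interior weak Harnack inequality (Theorem~9.22 of Gilbarg--Trudinger) on the full ball $B_{2R}(0)$. The one place you are more explicit than the paper is in verifying that the reflected function actually lies in $W^{2,n}(B_{2R}(0))$ — via the observation that the odd reflection of $u_n$ stays in $W^{1,n}$ precisely because its trace vanishes — which is exactly the hypothesis that lets you apply the theorem directly. The paper instead remarks that the reflected coefficients $a^{in},b^n$ may be discontinuous across $\{y_n=0\}$ and says one should rerun the proof of Theorem~9.22 (introducing $w=-\log(u+\epsilon_0)$, $\eta=(1-|y|^2)^\beta$, $v=\eta w$, etc.); since Theorem~9.22 is stated for bounded measurable, merely elliptic coefficients, your direct application is legitimate and a bit cleaner. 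Your bookkeeping on the factors of $2$ in the volumes and $L^n$ norms is correct and gets absorbed into the universal constant as you say.
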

\begin{proof}
	For any $y = (y', y_n) \in \mathbb{R}^n$, define $\text{Ref}(y) := (y', - y_n)$.
	For any $y \in B_{2R}^{-}(0)$, let
	$$u(y) := u\big(\text{Ref}(y)\big) \quad (\text{same to $c$ and $f$}),$$ $$\big(a_{ij}\big)(y) := \text{diag} (1, \cdots, 1, -1) \;\, \big(a_{ij}\big)\big(\text{Ref}(y)\big)\;\, \text{diag} (1, \cdots, 1, -1),$$
	$$b_i (y) := b_i\big(\text{Ref}(y)\big) \quad \text{for}\; i<n, \qquad b_n (y) := -b_n\big(\text{Ref}(y)\big).$$
	
	Obviously, $\det(a_{ij})$ is well defined on $B_{2R}(0)$.
	Since $u_n = 0$ on $\{y_n = 0\}$, then $u_{in} = 0$ on $\{y_n = 0\}$ for $i < n$.
	Even though $a_{in}$, $b_n$ for any given $i < n$ may not be continuous on $\{y_n = 0\}$, since $a_{in} u_{in} = 0$ and $b_n u_n =0$, then the extended $Lu$ is well defined on $B_{2R}(0)$. 
	Let $\tilde{u} := u + c$, $w := - \log \tilde{u}$, $\eta (y) := \big(1 - |y|^2\big)^\beta$ and $v := \eta w$ for some constants $c > 0$ and $\beta \geq 1$. Then $Lw$, $L\eta$ and $Lv$ are also well defined on $B_{2R}(0)$.
	We can apply the same proof of Theorem 9.22 in \cite{GT} to prove (\ref{w_harnack_ineq_bd}).
\end{proof}
\begin{remark}
	If $a_{in} = 0$ and $b_n = 0$ on $\{y_n = 0\}$ for $i < n$, then the extended $a_{ij}$ and $b_i$ are continuous in $B_{2R}(0)$. We can apply Theorem 9.22 in \cite{GT} directly to the extended $u$ to prove the weak Harnack inequality (\ref{w_harnack_ineq_bd}).
\end{remark}

Sz\'ekelyhidi \cite{Sze15} derived the following A-B-P type estimates.
\begin{proposition}[\cite{Sze15}, Proposition 10]\label{ABP_max}
	Suppose $v \in C^2 \big(B_1(0)\big)$ satisfy $v(0) + \frac{\epsilon}{2} \leq \inf_{\partial B_1} v$ with $\epsilon > 0$. Define
	\begin{equation}\label{P_set}
	P := \left\{ x\in B_1(0) : 
	\begin{aligned}
	& |D v(x)| < \frac{\epsilon}{2}, \text{and for all} \; y \in B_1(0)\\
	& v(y) \geq v(x) + Dv(x) \cdot (y - x)
	\end{aligned}
	\right\}.
	\end{equation}
	Then
	\begin{enumerate}
		\item for any $x\in P$, we have
		\begin{equation}
		v(x) < v(0) + \frac{\epsilon}{2}, \qquad D^2 v(x) \geq O;
		\end{equation}
		\item there exists a constant $c_0 = c_0(n)$ such that
		\begin{equation}
		c_0 \epsilon^n \leq \int_{P} \det (D^2 v).
		\end{equation}
	\end{enumerate}
\end{proposition}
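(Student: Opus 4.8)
The plan is to run the classical Alexandroff--Bakelman--Pucci argument, which transfers essentially verbatim from the Euclidean setting. For statement (1), I fix $x \in P$. Evaluating the supporting-hyperplane inequality from the definition of $P$ at $y = 0$ gives $v(0) \geq v(x) - Dv(x)\cdot x$, hence $v(x) \leq v(0) + |Dv(x)|\,|x| < v(0) + \frac{\epsilon}{2}$, using $|Dv(x)| < \frac{\epsilon}{2}$ and $|x| < 1$. For the Hessian bound, note that $w(y) := v(y) - v(x) - Dv(x)\cdot(y - x)$ is a $C^2$ function on $B_1(0)$ that is nonnegative and equals $0$ at the interior point $y = x$; so $x$ is an interior minimum of $w$, and therefore $D^2 v(x) = D^2 w(x) \geq O$.

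The heart of statement (2) is the inclusion $B_{\epsilon/2}(0) \subseteq Dv(P)$ in gradient space. Given $\xi \in \mathbb{R}^n$ with $|\xi| < \frac{\epsilon}{2}$, I would minimize $h(y) := v(y) - \xi\cdot y$ over the closed ball $\overline{B_1(0)}$ (the hypothesis $v(0) + \frac{\epsilon}{2} \leq \inf_{\partial B_1} v$ presupposes that $v$ extends continuously to $\partial B_1$, so the minimum is attained). For $y \in \partial B_1$ one has $h(y) \geq \inf_{\partial B_1} v - |\xi| > v(0) + \frac{\epsilon}{2} - \frac{\epsilon}{2} = v(0) = h(0)$, so the minimizer $x_\xi$ lies in the open ball; at $x_\xi$ we have $Dv(x_\xi) = \xi$, the inequality $h \geq h(x_\xi)$ rearranges to the supporting-hyperplane condition, and $|Dv(x_\xi)| = |\xi| < \frac{\epsilon}{2}$, so $x_\xi \in P$. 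This proves the inclusion.

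To conclude, I apply to the $C^1$ map $Dv \colon B_1(0) \to \mathbb{R}^n$ the standard fact that $|T(E)| \leq \int_E |\det DT|$ for a $C^1$ map $T$ (a consequence of Sard's theorem together with the change-of-variables formula), together with $D^2 v \geq O$ on $P$ from (1), to obtain
$$\omega_n \left(\frac{\epsilon}{2}\right)^n = \big|B_{\epsilon/2}(0)\big| \leq \big|Dv(P)\big| \leq \int_P |\det D^2 v| = \int_P \det D^2 v,$$
where $\omega_n$ denotes the volume of the unit ball; this gives (2) with $c_0 = \omega_n 2^{-n}$. I do not expect a genuine obstacle here: the only step needing any care is the image-measure inequality for $Dv$, which is classical, and the rest is elementary bookkeeping with supporting hyperplanes and the definition of $P$.
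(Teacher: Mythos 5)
Your proof is correct and follows the same argument as the cited source (Sz\'ekelyhidi \cite{Sze15}, Proposition 10); the paper itself does not reprove this result but simply quotes it. The three ingredients you use---the supporting-hyperplane computation for (1), the inclusion $B_{\epsilon/2}(0)\subseteq Dv(P)$ obtained by minimizing $v(y)-\xi\cdot y$, and the image-measure inequality $|Dv(P)|\leq\int_P|\det D^2v|$---are exactly the classical Alexandroff--Bakelman--Pucci scheme that Sz\'ekelyhidi employs.
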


Denote
\begin{equation}\label{B_1,c}
B_{1, c}^+ := B_1(0) \cap \{ y_n \geq c \} \subset \mathbb{R}^n, \qquad B_{1, c}^- := B_1(0) \cap \{ y_n < c \} \subset \mathbb{R}^n,
\end{equation}
where $-1 \leq c \leq 0$.
We use the above A-B-P type estimates to derive the following A-B-P type estimates on $B_{1,c}^+$ under Neumann boundary condition $u_n = 0$ on $\{ y_n = c\}$ if $c > -1$.

\begin{proposition}\label{B_c,1_prop}
	Let $u \in C^2 (B_{1,c}^+)$, where $B_{1,c}^+$ is defined in (\ref{B_1,c}) for $-1 \leq c \leq 0$. Suppose $u_n = 0$ on $\{ y_n = c\}$ if $c > -1$. Suppose $u$ attains an infimum at the origin $0$, then for any $\epsilon > 0$, there exists a subset $P_c \subset B_{1,c}^+$ such that the following hold
	\begin{enumerate}
		\item for any $y \in P_c$, we have
		\begin{equation}\label{B_c,1_ineq_1}
		u(y) < u(0) + \frac{1}{2}\epsilon, \qquad D^2 u(y) \geq - \epsilon I, \qquad |Du|(y) < 2\epsilon;
		\end{equation} 
		\item there exists a constant $c_0 = c_0(n)$ such that
		\begin{equation}
		c_0 \epsilon^n \leq \int_{P_c} \det (D^2 u + \epsilon I).
		\end{equation}
	\end{enumerate}
\end{proposition}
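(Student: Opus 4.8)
The plan is to reduce to Proposition~\ref{ABP_max} by an even reflection across the hyperplane $\{y_n=c\}$ combined with the quadratic perturbation $\tfrac{\epsilon}{2}|y|^2$, and then to ``fold'' the resulting contact set back into $B_{1,c}^+$. When $c=-1$ there is nothing to reflect: $B_{1,-1}^+=B_1(0)$ and there is no Neumann condition, so one applies Proposition~\ref{ABP_max} directly to $v(y):=u(y)-u(0)+\tfrac{\epsilon}{2}|y|^2$. Since $u$ attains its infimum at $0$, we have $v(y)\ge\tfrac{\epsilon}{2}|y|^2$, whence $v(0)+\tfrac{\epsilon}{2}\le\inf_{\partial B_1}v$; taking $P_{-1}:=P$, the three inequalities in (\ref{B_c,1_ineq_1}) are read off from $v(x)<v(0)+\tfrac{\epsilon}{2}$, from $D^2v(x)=D^2u(x)+\epsilon I\ge O$, and from $|Dv(x)|<\tfrac{\epsilon}{2}$ (for the gradient, $|Du(x)|\le|Dv(x)|+\epsilon|x|<\tfrac32\epsilon$), while part~(2) of Proposition~\ref{ABP_max} is exactly the asserted Monge-Amp\`ere bound.

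Now suppose $-1<c\le0$. Put $\mathrm{Ref}_c(y',y_n):=(y',2c-y_n)$ and define $\tilde u$ on $\widetilde\Omega:=B_{1,c}^+\cup\mathrm{Ref}_c(B_{1,c}^+)$ by $\tilde u=u$ on $B_{1,c}^+$ and $\tilde u(y)=u(\mathrm{Ref}_c y)$ for $y_n<c$. An elementary computation using $-1<c\le0$ shows $\mathrm{Ref}_c\big(B_1(0)\cap\{y_n<c\}\big)\subset B_{1,c}^+$, so $B_1(0)\subseteq\widetilde\Omega$; and the hypothesis $u_n=0$ on $\{y_n=c\}$ forces $\partial_i\partial_n u=0$ there for $i<n$, which is precisely what makes $\tilde u$ of class $C^2$ across $\{y_n=c\}$. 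Applying Proposition~\ref{ABP_max} to $v:=\tilde u-u(0)+\tfrac{\epsilon}{2}|y|^2$ on $B_1(0)$ (note $v(0)=0$ and, since $\tilde u$ takes only values that $u$ takes on $B_{1,c}^+$, $v(y)\ge\tfrac{\epsilon}{2}|y|^2$, so the separation hypothesis holds) we obtain a contact set $P\subset B_1(0)$.

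Set $P_c:=\big(P\cap\{y_n\ge c\}\big)\cup\mathrm{Ref}_c\big(P\cap\{y_n<c\}\big)\subseteq B_{1,c}^+$, and let $R:=\mathrm{diag}(1,\ldots,1,-1)$. Using $\tilde u=u\circ\mathrm{Ref}_c$ for $y_n<c$, together with $D^2u(\mathrm{Ref}_c z)=R\,D^2\tilde u(z)\,R$, $|Du(\mathrm{Ref}_c z)|=|D\tilde u(z)|$, and $R^2=I$, the conclusions $v(x)<v(0)+\tfrac{\epsilon}{2}$, $D^2v(x)\ge O$, $|Dv(x)|<\tfrac{\epsilon}{2}$ of Proposition~\ref{ABP_max} translate into the inequalities (\ref{B_c,1_ineq_1}) at the image point in $P_c$. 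For part~(2), since $z\mapsto\mathrm{Ref}_c z$ has Jacobian $1$ and $\det\big(D^2u(\mathrm{Ref}_c z)+\epsilon I\big)=\det\big(D^2v(z)\big)$, and since $D^2u+\epsilon I\ge O$ on $P_c$,
\[
2\int_{P_c}\det(D^2u+\epsilon I)\;\ge\;\int_{P\cap\{y_n\ge c\}}\det(D^2v)+\int_{P\cap\{y_n<c\}}\det(D^2v)\;=\;\int_{P}\det(D^2v)\;\ge\;c_0\epsilon^n,
\]
where the factor $2$ absorbs any overlap between the two pieces of $P_c$; relabelling $c_0/2$ as the new dimensional constant finishes the proof.

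The main obstacle is that the perturbation $\tfrac{\epsilon}{2}|y|^2$ has to be centered at the infimum point $0$ in order to produce the separation estimate, while the reflection symmetry is with respect to $\{y_n=c\}$; these two axes coincide only when $c=0$, so one cannot merely symmetrize $v$. The folding step circumvents this, and it works precisely because $\mathrm{Ref}_c$ is an isometry, so every quantity appearing in (\ref{B_c,1_ineq_1}) and in the Monge-Amp\`ere integral is invariant under it. The only other delicate point is the $C^2$-regularity of the reflected function $\tilde u$ across $\{y_n=c\}$, which is exactly where the Neumann condition $u_n=0$ is used.
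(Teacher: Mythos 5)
Your proof is correct and follows essentially the same route as the paper: even reflection across $\{y_n=c\}$, application of Proposition~\ref{ABP_max} to the extended function plus the quadratic perturbation $\tfrac{\epsilon}{2}|y|^2$, folding the contact set $P$ back into $B_{1,c}^+$, and using reflection-invariance of the Hessian eigenvalues to transfer both the pointwise conclusions and the Monge--Amp\`ere integral bound (with the factor $2$ absorbed into the dimensional constant). The extra checks you supply --- the degenerate case $c=-1$, the containment $\mathrm{Ref}_c(B_1(0)\cap\{y_n<c\})\subset B_{1,c}^+$, and the $C^2$-matching condition $\partial_i\partial_n u=0$ on $\{y_n=c\}$ --- are details the paper leaves implicit, not a different argument.
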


\begin{proof}	
	When $c > -1$ and $u_n = 0$ on $\{ y_n = c\}$, we have an even extension of $u$ to $B_1(0)$ as follows.
	For $y = (y', y_n) \in \mathbb{R}^n$, let
	$$\text{Ref}_c (y) := (y', 2c - y_n)$$
	be the reflection map about $\{ y_n = c \}$. For any $y \in B_{1,c}^-$, let
	$$u(y) := u\big(\text{Ref}_c (y)\big).$$
	Since $u_n = 0$ on $\{ y_n = c\}$, then the extended $u \in C^2 \big(B_1(0)\big)$ and $u$ still attains an infimum at $0$.
	
	Let $$v(y) := u(y) + \frac{1}{2} \epsilon |y|^2.$$
	Then $v(0) + \frac{1}{2}\epsilon = u(0) + \frac{1}{2}\epsilon \leq \inf_{\partial B_1} v$.
	We define $P$ for $v$ as in (\ref{P_set}), and
	let $$P_c^+ := P \cap B_{1,c}^+, \qquad P_c^- := P \cap B_{1,c}^-, \qquad P_c := P_c^+ \cup \text{Ref}_c (P_c^-)\subset B_{1,c}^+.$$
	For any $y\in P$, by Proposition \ref{ABP_max}, $u(y) \leq v(y) < v(0) + \frac{1}{2}\epsilon = u(0) + \frac{1}{2}\epsilon$, and $|Du|^2(y) \leq 2 \big(|Dv|^2(y) + \epsilon^2 |y|^2\big) < \frac{5}{2} \epsilon^2 $, and $D^2 u(y) + \epsilon I \geq O$, that is, (\ref{B_c,1_ineq_1}) holds. 
	For any $y \in \text{Ref}_c (P_c^-)$, since $u(y) = u\big(\text{Ref}_c (y)\big)$, $|Du|^2(y) = |Du|^2\big(\text{Ref}_c(y)\big)$, and $\lambda\big(D^2 u (y)\big) = \lambda\big(D^2 u (\text{Ref}_c (y))\big)$, where $\lambda$ are the eigenvalues of the Hessian matrix, then (\ref{B_c,1_ineq_1}) also holds, and
	\begin{equation}
	\int_{\text{Ref}_c (P_c^-)} \det (D^2 v) = \int_{P_c^-} \det (D^2 v).
	\end{equation}
	Hence, by Proposition \ref{ABP_max}, 
	\begin{equation}
	c_0 \epsilon^n \leq \int_{P} \det (D^2 v) = \int_{P_c^+} \det (D^2 v) + \int_{\text{Ref}_c (P_c^-)} \det (D^2 v) \leq 2 \int_{P_c} \det (D^2 v).
	\end{equation}
\end{proof}

With Theorem \ref{weak_harnack_bd_thm} and Proposition \ref{B_c,1_prop}, we can modify the proof of Proposition 11 in \cite{Sze15} to prove Theorem \ref{osc_thm}.

\begin{proof}[Proof of Theorem \ref{osc_thm}]
	
	Assume $\sup_{\overline{M}} u =0$, it suffices to prove that 
	\begin{equation}
	L := \inf_{\overline{M}} u \geq - C.
	\end{equation}

	For $x_\alpha \in M$, we can choose a coordinate chart $(U_\alpha, \phi_\alpha)$ such that $x_\alpha \in U_\alpha \subset M$ and $\phi_\alpha (U_\alpha) = B_2(0) \subset \mathbb{R}^n$ with $\phi (x_\alpha) = 0$. For $x_\alpha \in \partial M$, we can choose a coordinate chart $(U_\alpha, \phi_\alpha)$ such that $x_\alpha \in U_\alpha \subset \overline{M}$ and $\phi_\alpha (U_\alpha) = B_2^+ (0)\subset \mathbb{R}^n$ with $\phi_\alpha (x_\alpha) = 0$ and $\phi_\alpha (U_\alpha \cap \partial M) = B_{2}(0) \cap \{y_n = 0\}$, and $\phi_\alpha^* (\frac{\partial}{\partial y_n}) / |\phi_\alpha^* (\frac{\partial}{\partial y_n})| = \nu$ on $\partial M$, where $B_{2}^{+}(0) := B_{2}(0) \cap \{y_n \geq 0\}$. Meanwhile, $u_\nu = 0$ on $\partial M$ implies $u_n = 0$ on $\{y_n = 0\}$ in coordinate charts chosen above. 
	Under each coordinate chart $(U_\alpha, \phi_\alpha)$, denote $$D_i := \frac{\partial}{\partial y_i},\quad \nabla_i := \nabla_{\phi^*_{\alpha}(\frac{\partial}{\partial y_i})},\quad g_{ij} := g\Big(\phi^*_{\alpha}(\frac{\partial}{\partial y_i}), \phi^*_{\alpha}(\frac{\partial}{\partial y_j})\Big),$$ and $\Gamma_{ij}^k$ the christoffel symbols with respect to $g_{ij}$.
	Let $$\tilde{U}_\alpha := \phi_\alpha^{-1} \big(B_1(0)\big) \quad \text{or} \quad \tilde{U}_\alpha := \phi_\alpha^{-1} \big(B_1^+(0)\big).$$ Since $\cup_{x_\alpha \in \overline{M}} \tilde{U}_\alpha$ is an open cover of compact manifold $\overline{M}$, we can find a finite open cover $\cup_{\alpha \in I} \tilde{U}_\alpha$ with $I$ a finite index set.

	Since $A^{ij}u_{ij} \geq - C_0$ in $\overline{M}$, that is, $A^{ij} D_i D_j (-u) - A^{ij}\Gamma^k_{ij} D_k(-u) \leq C_0$ in local coordinates, then by weak Harnack inequality (see Theorem 9.22 in \cite{GT} and Theorem \ref{weak_harnack_bd_thm}), 
	\begin{equation}\label{osc_Lp_loc}
	\left( \int_{\tilde{U}_\alpha} (-u)^p\, d\mu_{\overline{M}} \right)^{\frac{1}{p}} \leq C_1 \left[ \inf_{\tilde{U}_\alpha} (-u) + 1 \right],
	\end{equation}
	where $p, C_1$ depends only on the finite covering $\{(U_\alpha, \phi_\alpha)\}_{\alpha\in I}$, $A$, $C_0$, and the background geometric data. For any $\alpha, \beta \in I$ with $U_\beta \cap U_\alpha \ne \emptyset$, we have
	\begin{equation}
	\begin{aligned}
	\inf_{\tilde{U}_\beta} (-u)\, \big[\text{Vol} (\tilde{U}_\alpha \cap \tilde{U}_\beta)\big]^{\frac{1}{p}} 
	&\leq \left( \int_{\tilde{U}_\alpha \cap \tilde{U}\beta} (-u)^p\, d\mu_{\overline{M}} \right)^{\frac{1}{p}} \leq \left( \int_{\tilde{U}_\alpha} (-u)^p\, d\mu_{\overline{M}} \right)^{\frac{1}{p}} \\
	&\leq C_1 \left[ \inf_{\tilde{U}_\alpha} (-u) + 1 \right].
	\end{aligned}
	\end{equation}
	Since $\sup_{\overline{M}} u = 0$, then there exists $\tilde{U}_{\alpha_0}$ with $\alpha_0 \in I$ such that $\inf_{\tilde{U}_{\alpha_0}} (-u) = 0$. For $\alpha_1 \in I$ such that $\tilde{U}_{\alpha_1} \cap \tilde{U}_{\alpha_0} \neq \emptyset$, we have $\inf_{\tilde{U}_{\alpha_0}} (-u) \leq C_1 / \big[\text{Vol} (\tilde{U}_{\alpha_0} \cap \tilde{U}_{\alpha_1})\big]^{\frac{1}{p}}$. By induction, for any $\alpha \in I$, we have $\inf_{\tilde{U}_\alpha} (-u) \leq C_2$, where $C_2$ depends only on $C_1, \min_{\alpha, \beta \in I, \tilde{U}_\alpha \cap \tilde{U}_\beta\neq\emptyset}[\text{Vol} (\tilde{U}_\alpha \cap \tilde{U}_\beta)]^{\frac{1}{p}}, |I|$. Hence, by (\ref{osc_Lp_loc}), we obtain a global $L^p$ bound for $u$ 
	\begin{equation}\label{osc_Lp_global}
	||u||_{L^p(\overline{M})} \leq C_3,
	\end{equation}
	where $p, C_3$ depends only on the finite covering $\{(U_\alpha, \phi_\alpha)\}_{\alpha\in I}$, $A$, $C_0$, and the background geometric data.
	
	Suppose $u$ attains an infimum $L$ at $p\in \overline{M}$, then there exists $\alpha \in I$ such that $p \in \tilde{U}_\alpha$.
	Obviously, $B_1\big(\phi_\alpha(p)\big) \subset B_2(0)$.
	Let $T (y) := y - \phi_\alpha (p)$ be a translation in $\mathbb{R}^n$, 
	and $$\tilde{U_p} := \phi_\alpha^{-1} \Big(\phi_\alpha(U_\alpha) \cap B_1\big(\phi_\alpha(p)\big)\Big), \qquad \phi_p := T\circ\phi_\alpha,$$
	then $(\tilde{U_p}, \phi_p)$ is a coordinate chart near $p$. 
	Let 
	\begin{equation}
	c:=\left\{
	\begin{aligned}
	&-1, \qquad &\text{if}\;& \phi_\alpha(U_\alpha) = B_2(0),\\
	&- y_n \big(\phi_\alpha (p)\big), \qquad &\text{if}\;& \phi_\alpha(U_\alpha) = B^+_2(0).
	\end{aligned}\right.
	\end{equation}
	Obviously, $\phi_p (p) = 0$ and $\phi_p (\tilde{U_p}) = B_{1,c}^+$, and $u_n = 0$ on $\{y_n = c\} \cap B_{1,c}^+$ if $c > -1$, where $B_{1,c}^+$ is defined in (\ref{B_1,c}). By Proposition \ref{B_c,1_prop}, for any $\epsilon > 0$, there exists a subset $P_c \subset B_{1,c}^+$ such that (i) for any $y \in P_c$, $u(y) < L + \frac{1}{2}\epsilon$, and $D^2 u(y) \geq - \epsilon I$, and $|Du|(y) < 2\epsilon$; (ii) there exists a constant $c_0 = c_0(n)$ such that
	\begin{equation}\label{P_c_ineq_2}
	c_0 \epsilon^n \leq \int_{P_c} \det (D^2 u + \epsilon I).
	\end{equation}
	When $L < - \frac{\epsilon}{2}$, for any $y \in P_c$, we have $-u(y) > - L - \frac{1}{2}\epsilon > 0$, and hence, by (\ref{osc_Lp_global}),
	\begin{equation}\label{vol_up_bd}
	\left|L + \frac{\epsilon}{2}\right|^p \, \text{Vol}\big(\phi_p^{-1}(P_c)\big) \leq ||-u||^p_{L^p(\overline{M})} \leq C_3^p.
	\end{equation}
	Moreover, we can find universal constants $\theta$ and $C_4$ depending on the finite cover $\{(U_\alpha, \phi_\alpha)\}_{\alpha\in I}$ and the background geometric data such that $(n\times n)$-matrices $(g_{ij})$ and $\big(\Gamma_{ij}^k\big)$ (fix $k$) satisfy $\theta I \leq (g_{ij}) \leq \frac{1}{\theta}I$ with $0 < \theta \leq 1$, and $- C_4 I \leq \big(\Gamma_{ij}^k\big) \leq C_4 I$ for all $k$. For any $y \in P_c$, since $\nabla_i\nabla_j u = D_i D_j u - \Gamma_{ij}^k D_k u$, and $D^2 u(y) \geq - \epsilon I$, and $|D_i u|(y) < 2\epsilon$, then
	\begin{equation}\label{osc_ineq_1}
	\nabla^2 u (y) > - C_5\epsilon g (y),
	\end{equation} 
	where $C_5$ depends only on $C_4, \theta, n$.
	Suppose $\{E_\alpha\} \subset T\overline{M}$ is a local orthonormal frame near $y$.
	Since $\underline{u} \equiv 0$ is a $\mathcal{C}$-subsolution of $F(\chi_{\alpha\beta} + u_{\alpha\beta}) = \Psi$, then we \textbf{claim} that there exists a constant $\delta > 0$, $C_6 > 0$ depending only on $F, \chi, \Psi$ and background geometric data such that
	\begin{equation}\label{osc_ineq_2}
	\Big((\chi_{\alpha\beta})(y) - \delta I + \Gamma_n \Big) \cap \{ (A_{\alpha\beta}) \in \Gamma: F(A_{\alpha\beta}) \leq \Psi(y) \} \subset \{A: ||A|| \leq C_6\}\footnotemark.
	\end{equation}
	Since $F(\chi_{\alpha\beta} + u_{\alpha\beta})(y) \leq \Psi(y)$, if we choose $\epsilon = \delta/C_5$, then by (\ref{osc_ineq_1}) and (\ref{osc_ineq_2}),
	\begin{equation}
	\big(\chi_{\alpha\beta} + u_{\alpha\beta}\big) (y) \in \{A: ||A|| \leq C_6\},
	\end{equation}
	which implies $||\nabla^2 u||_g(y) \leq C_7$, where $C_7$ depends only on $C_6, \sup_{\overline{M}}||\chi||_g$. Since $D_i D_j u  = \nabla_i\nabla_j u + \Gamma_{ij}^k D_k u$, then $(D_i D_j u) \leq C_8 I$, where $C_8$ depends only on $C_4$, $C_7$, $\theta$, $\epsilon$ and $n$. 
	Hence,	by (\ref{P_c_ineq_2}) and (\ref{vol_up_bd}), 
	\begin{equation}
	c_0 \epsilon^n \leq (C_8 + \epsilon)^n \text{Vol}(P_c) \leq \frac{(C_8 + \epsilon)^n}{\theta^{n/2}} \text{Vol}\big(\phi_p^{-1}(P_c)\big) \leq \frac{C_9}{\left|L + \frac{\epsilon}{2}\right|^p},
	\end{equation}
	which implies $L \geq - C_{10}$.
	
	We finish the proof by proving the claim above. For any $x \in \overline{M}$, 
	suppose $\{E_\alpha\} \subset T\overline{M}$ is a local orthonormal frame near $x$, and
	let $\Delta^\delta(x) := \Big((\chi_{\alpha\beta})(x) - \delta I + \Gamma_n \Big) \cap \{ (A_{\alpha\beta}) \in \Gamma: F(A_{\alpha\beta}) \leq \Psi(x) \}$.
	Obviously, $\Delta^{\delta_1}(x) \subset \Delta^{\delta_2}(x)$ for $0 < \delta_1 \leq \delta_2$.
	If $F(\chi_{\alpha\beta})(x) > \Psi(x)$, then there exists $\delta_x > 0$ such that $F(\chi_{\alpha\beta} - \delta_x\,\delta_{\alpha\beta}) > \Psi$ near $x$. 
	Hence, $\Delta^{\delta_x}(y) = \emptyset$ for $y$ in a neighborhood $U_x$ of $x$.
	If $F(\chi_{\alpha\beta})(x) \leq \Psi(x)$, by continuity and ellipticity of $F$, there exists $\delta_x > 0$ such that $\mathcal{C}\Big((\chi_{\alpha\beta}) - \delta_x\,I; (\chi_{\alpha\beta}) - \delta_x\,I, \Psi\Big) = \overline{\Gamma_n}$ near $x$, which implies that there exists a constant $T_x > 0$ such that for any $(P_{\alpha\beta}) \in \overline{\Gamma_n} \cap \{A: ||A|| = 1\}$, we have $F(\chi_{\alpha\beta} - \delta_x\,\delta_{\alpha\beta} + T_x P_{\alpha\beta}) > \Psi$ near $x$. 
	Hence, the $\Delta^{\delta_x}(y) \subset \{ A: ||A|| \leq \sup_{\overline{M}}||\chi||_g + \sqrt{n}\delta_x + T_x \}$ for $y$ in a neighborhood $U_x$ of $x$. 
	Since $\overline{M}$ is compact, then there is a finite open cover $\{U_{x_i}\}_{i=1}^{N}$ over $\overline{M}$. 
	We choose $\delta := \min_{1\leq i \leq N} \{\delta_{x_i}\}$ and $C_6 := \max_{1\leq i\leq N}\{ \sup_{\overline{M}}||\chi||_g + \sqrt{n}\delta_{x_i} + T_{x_i} \}$.
\end{proof}

\section{A priori estimates}\label{estimates.sec_1}

In this section, we assume $F \in C^2(\Gamma)$ satisfies (\ref{cone}) (\ref{ellipticity}) (\ref{concavity}) and the following two additional conditions:
\begin{align}
& \quad  F(A) = f\big(\lambda(A)\big),  \text{where}\; f \; \text{is symmetric}; \label{eigenv} \\
& \quad \text{for any $\sigma < \sup_{\Gamma} F$ and $A \in \Gamma$, we have $\lim_{t\rightarrow \infty} F(tA) > \sigma$\footnote.}. \label{extracondition_1}
\end{align}
\footnotetext{By Lemma 9 in \cite{Sze15}, $\sum_i F^{ii} > \tau$, where $\tau$ depending only on the level set of $F$.}
We assume $u$ is an admissible solution of the IBV problem (\ref{main_eq}).
We derive the following a priori estimates $\sup_{\overline{M_T}}|u_t|$, $\sup_{\overline{M_T}}|u|$, $\sup_{\overline{M_T}}|\nabla u|$ and $\sup_{\overline{M_T}}|\nabla^2 u|$, where $M_T := M \times (0, T]$ with parabolic boundary $\partial M_T := \overline{M_T} \setminus M_T$.

We have the following maximum principle for $u_t$.
\begin{lemma}\label{u_t_bounds}
	Suppose $u_t \in C^{2, 1}\big(\overline{M}\times (0, T]\big) \cap C^0(\overline{M_T})$. Suppose $\phi_z (x, z) \geq 0$ on $\partial M$.
	
	(1) When $\psi_z (x, z, t) \equiv 0$, we have
	\begin{equation}
	\sup_{\overline{M_T}}|u_t| \leq \sup_{\overline{M}\times \{t=0\}}  |u_t| + t\sup_{\overline{M_T}}|\psi_t|.
	\end{equation}
	Especially, when $\phi_z(x,z) \equiv 0$, we have
	\begin{equation}
	\inf_{\overline{M}\times \{t=0\}} u_t - t\sup_{\overline{M_T}} |\psi_t| \leq u_t \leq \sup_{\overline{M}\times \{t=0\}}  u_t + t\sup_{\overline{M_T}}|\psi_t|, \qquad \forall\,(x, t)\in \overline{M_T}.
	\end{equation}
	
	(2) When $\psi_t (x, z, t) \equiv 0$, we have
	\begin{equation}
	\min\Big\{\inf_{\overline{M}\times \{t=0\}} u_t,\, 0\Big\} \leq e^{\lambda t} u_t \leq \max\Big\{\sup_{\overline{M}\times \{t=0\}} u_t,\, 0\Big\}, \qquad \forall\,(x, t)\in \overline{M_T},
	\end{equation}
	where $\lambda := \inf_{\overline{M_T}}\psi_z(x, u)$.
\end{lemma}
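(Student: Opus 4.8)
The plan is to differentiate the evolution equation $u_t = F(\chi_{ij} + u_{ij}) - \psi(x,u,t)$ in $t$, obtaining a linear parabolic equation for $w := u_t$, and then run a maximum principle adapted to the Neumann boundary condition. Writing $W := \chi_{ij} + u_{ij}$, differentiation gives
\begin{equation}
w_t = F^{ij}(W)\, w_{ij} - \psi_z(x,u,t)\, w - \psi_t(x,u,t),
\end{equation}
which, by ellipticity \eqref{ellipticity}, is a (locally) uniformly parabolic linear equation for $w$ with principal coefficient matrix $\big(F^{ij}(W)\big)>0$. Differentiating the boundary condition $u_\nu = \phi(x,u)$ in $t$ yields $w_\nu = \phi_z(x,u)\, w$ on $\partial M \times \{t\geq 0\}$, i.e. a Robin-type condition with sign $\phi_z \geq 0$.

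For part (1), when $\psi_z \equiv 0$ the zeroth-order term drops out and the equation reads $w_t = F^{ij} w_{ij} - \psi_t$. Consider $h^{\pm}(x,t) := \pm w(x,t) - \sup_{\overline M\times\{0\}}|u_t| - t\sup_{\overline{M_T}}|\psi_t|$; then $h^{\pm}_t - F^{ij} h^{\pm}_{ij} = \pm w_t - F^{ij}(\pm w_{ij}) - \sup|\psi_t| = \mp\psi_t - \sup|\psi_t| \leq 0$, so $h^\pm$ is a subsolution of the parabolic operator. At $t=0$, $h^\pm \leq 0$. The key point is to rule out a positive interior-in-time maximum on $\partial M$: at a boundary maximum point of $h^\pm$ with $h^\pm>0$ we would have $(h^\pm)_\nu \leq 0$ (since $\nu$ is the \emph{inner} normal and $h^\pm$ decreases moving inward from a boundary max), while the boundary condition gives $(h^\pm)_\nu = w_\nu = \phi_z\, w = \pm\phi_z (h^\pm + \text{const})$; one checks the sign is incompatible with $h^\pm>0$ there — more precisely, if $h^\pm>0$ at such a point then $\pm w > 0$, so $w_\nu = \phi_z w$ has the same sign as $\pm$, forcing $(h^\pm)_\nu \geq 0$ when we pick the right sign, and combined with the Hopf lemma this contradicts strict positivity unless the max is attained at $t=0$. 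Hence $\sup_{\overline{M_T}} h^\pm \leq 0$, which is exactly the claimed bound; the sharpened two-sided statement when additionally $\phi_z \equiv 0$ follows by the same argument applied to $w$ itself (now with a homogeneous Neumann condition $w_\nu = 0$, where the boundary case is handled directly by the Hopf boundary-point lemma).

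For part (2), when $\psi_t \equiv 0$ but $\psi_z$ may be nonzero, set $\lambda := \inf_{\overline{M_T}}\psi_z(x,u)$ and consider $\tilde w := e^{\lambda t} w$. Then $\tilde w_t = e^{\lambda t}(w_t + \lambda w) = F^{ij}\tilde w_{ij} + (\lambda - \psi_z)\tilde w$, and since $\lambda - \psi_z \leq 0$ this is a linear parabolic equation with nonpositive zeroth-order coefficient. By the standard weak maximum principle a positive interior maximum of $\tilde w$ can only occur on the parabolic boundary; on $\partial M$ the condition $\tilde w_\nu = \phi_z\, \tilde w$ with $\phi_z \geq 0$ again rules out a positive boundary maximum via the Hopf lemma (if $\tilde w>0$ at a boundary max then $\tilde w_\nu > 0$, contradicting $\tilde w_\nu \leq 0$), so $\sup \tilde w = \max\{\sup_{\overline M\times\{0\}}\tilde w,\, 0\} = \max\{\sup_{\overline M\times\{0\}} u_t,\,0\}$; the lower bound is symmetric. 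I expect the main obstacle to be the careful boundary-point analysis: one must verify that the Robin condition with $\phi_z\ge 0$ genuinely obstructs a positive boundary extremum (Hopf's lemma requires the inner-normal derivative to be strictly negative at a boundary maximum), and treat the degenerate case $\phi_z = 0$ separately where one falls back to the homogeneous Neumann Hopf lemma; everything else is the textbook parabolic maximum principle.
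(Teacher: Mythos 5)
Your proposal is correct, and for part (2) it is a genuinely different argument from the paper's. In part (1) your proof and the paper's are essentially the same maximum principle for $w := u_t$ under the Robin condition $w_\nu = \phi_z w$; the paper simply works with the perturbed function $v^\epsilon := u_t + t\sup_{\overline{M_T}}|\psi_t| + \epsilon t$ so that the interior case is excluded by a strict differential inequality $\mathcal{L}[u]v^\epsilon \leq -\epsilon$ rather than by appeal to the strong maximum principle. For part (2), the paper does not take the natural exponential transform $e^{\lambda t}u_t$; following Schn\"urer--Smoczyk it instead studies $w^\epsilon := e^{2\lambda t}(u_t^+)^2 - \epsilon t$ with $u_t^+ := \max\{u_t, 0\}$. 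Squaring the truncation buys two things at once: the boundary derivative $w^\epsilon_\nu = 2\phi_z e^{2\lambda t}(u_t^+)^2$ is nonnegative with no case analysis on the sign of $u_t$, and the $\epsilon t$ term makes $\mathcal{L}[u]w^\epsilon \geq \epsilon > 0$ strict, so both the interior and boundary maxima are ruled out without confronting the degeneracy you flag at the end (the Hopf lemma at a non-strict positive maximum, and the case $\phi_z = 0$). Your route exploits instead that the zero-order coefficient $\lambda - \psi_z$ is nonpositive, so the parabolic weak maximum principle with $c \leq 0$ directly produces the $\max\{\cdot, 0\}$ form of the bound; this is arguably more transparent, but it does require the full parabolic strong maximum principle and Hopf boundary-point lemma to exclude the lateral boundary and to handle $\phi_z = 0$, details you correctly identify as the main work but which the paper's $\epsilon$-perturbed, squared-and-truncated test function sidesteps entirely.
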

\begin{remark}
	When $\phi = \phi(x)$ on $\partial M$ and $\psi = \psi (x)$ in $\overline{M_T}$, then $\sup_{\overline{M}\times \{t=t_1\}}u_t \geq \sup_{\overline{M}\times \{t=t_2\}}u_t$ and $\inf_{\overline{M}\times \{t=t_1\}}u_t \leq \inf_{\overline{M}\times \{t=t_2\}}u_t$ for $0 \leq t_1 \leq t_2$. 
\end{remark}

\begin{proof}
	Let $\mathcal{L}[u]$ be a linear parabolic operator on $M_T$, which is locally written as $\mathcal{L}[u] := F^{ij}\nabla_{i}\nabla_j - \partial_t$, where $F^{ij} := F^{ij}(\chi_{ij} + u_{ij})$.
	Differentiating (\ref{main_eq}) about $t$, we have
	\begin{equation}\label{u_tt}
	\mathcal{L}[u] u_t = (\psi)_t.
	\end{equation}
	
	(1) When $\psi_z (x, z, t) \equiv 0$.
	
	For any $\epsilon > 0$, let $v^\epsilon (x,t):= u_t(x,t) + t\sup_{\overline{M_T}}|\psi_t| + \epsilon t$. Then in $M_T$, by (\ref{u_tt}),
	\begin{equation}\label{u_t_lower}
	\mathcal{L}[u] v^\epsilon = \mathcal{L}[u] u_t - \sup_{\overline{M_T}}|\psi_t| -\epsilon \leq - \epsilon.
	\end{equation}

	Suppose $v^\epsilon$ attains a minimum at $(x_0, t_0) \in M_T$, then at $(x_0, t_0)$, $\mathcal{L}[u] v^\epsilon \geq 0$, which contradicts to (\ref{u_t_lower}).
	
	Suppose $v^\epsilon$ attains a minimum at $(x_0, t_0) \in \partial M\times (0,T]$. Since $\mathcal{L}[u] v^\epsilon \leq -\epsilon$ in $M_T$, by strong maximum principle, $v^\epsilon_\nu(x_0, t_0) > 0$. 
	Since $ v_\nu^\epsilon =  u_{t\nu} = \phi_z u_t$ and $\phi_z \geq 0$ on $\partial M$, then $\phi_z \big(x_0, u(x_0, t_0)\big) > 0$ and $u_t(x_0, t_0) > 0$. Hence, we have $v^\epsilon \geq v^\epsilon(x_0, t_0) > 0$ in $\overline{M_T}$, that is, 
	\begin{equation}\label{u_t_est_ineq_1}
	u_t > - t \sup_{\overline{M_T}}|\psi_t| - \epsilon t.
	\end{equation}
	
	Suppose $v^\epsilon$ attains a minimum at $(x_0, t_0) \in M\times \{t = 0\}$, then in $\overline{M_T}$,
	\begin{equation}\label{u_t_est_ineq_2}
	u_t \geq \inf_{\overline{M}\times \{t=0\}} u_t - t\sup_{\overline{M_T}} |\psi_t| - \epsilon t.
	\end{equation}
	
	Therefore, for any $(x, t) \in \overline{M_T}$, we have
	\begin{equation}
	u_t \geq \min\Big\{\inf_{\overline{M}\times \{t=0\}} u_t,\, 0\Big\} - t\sup_{\overline{M_T}} |\psi_t| - \epsilon t.
	\end{equation}
	When $\phi_z \equiv 0$ on $\partial M$, (\ref{u_t_est_ineq_1}) does not hold because it requires $\phi_z > 0$ at $(x_0, t_0)$, and consequently, 
	\begin{equation}
	u_t \geq \inf_{\overline{M}\times \{t=0\}} u_t - t\sup_{\overline{M_T}} |\psi_t| - \epsilon t.
	\end{equation}
	We let $\epsilon$ tends to 0, then we obtain the lower bound for $u_t$.
	
	Similarly, we can obtain the upper bound for $u_t$.

	(2) When $\psi_t (x, z, t) \equiv 0$.
	
	Motivated by \cite{SchnurerSmoczyk03}, for any $\epsilon > 0$, let 
	$w^\epsilon (x, t) := e^{2\lambda t} (u_t^+)^2 (x, t) - \epsilon t$, where $u_t^+ := \max \{u_t, 0\}$.
	Suppose $w^\epsilon$ attains a maximum at $(x_0, t_0) \in \overline{M_T}$.
	At $(x_0, t_0)$, there are two possibilities
	\footnote{If $(u_t^+)^2(x_0, t_0) = 0$, then $0 \leq (u_t^+)^2(x, 0) = w^\epsilon(x, 0) \leq w^\epsilon(x_0, t_0) = -\epsilon t_0$, which implies $t_0 = 0$.}
	: (1) $t_0 = 0$, (2) $t_0 > 0$ and $(u_t^+)^2(x_0, t_0) > 0$.
	When $t_0 = 0$, $w^\epsilon \leq \sup_{\{t=0\}}(u_t^+)^2$. When we take $\epsilon \rightarrow 0$, we have
	\begin{equation}\label{u_t_upper_case_2}
	e^{2\lambda t}(u_t^+)^2 \leq \sup_{\{t=0\}}(u_t^+)^2.
	\end{equation}  
	When $t_0 > 0$ and $(u_t^+)^2(x_0, t_0) > 0$,
	by (\ref{u_tt}) and ellipticity of $F$, we have near $(x_0, t_0)$
	\begin{equation}\label{w_t_upper}
	\mathcal{L}[u] w^\epsilon = 2 \big(\psi_z - \lambda\big) e^{2\lambda t} u_t^2 + \epsilon + 2 e^{2\lambda t}F^{ij}(u_t)_i (u_t)_j \geq \epsilon,
	\end{equation}
	if we choose $\lambda \leq \inf_{\overline{M_T}} \psi_z$.
	If $x_0 \in M$, then by (\ref{w_t_upper}), $0 \geq \mathcal{L}[u] w^\epsilon \geq \epsilon$, which is a contradiction. 
	If $x_0 \in \partial M$, then by (\ref{w_t_upper}) and strong maximum principle, $w^\epsilon_\nu(x_0, t_0) < 0$, which contradicts to $w^\epsilon_\nu = 2\phi_z e^{2\lambda t} u_t^2 \geq 0$. Hence, (\ref{u_t_upper_case_2}) holds and we obtain the upper bound of $u_t$.
	
	To obtain the lower bound of $u_t$, we just replace $u_t^+$ with $u_t^- := \min \{u_t, 0\}$ in the definition of $w^\epsilon$.
\end{proof}

$C^0$ estimates follow immediately.
\begin{corollary}\label{para_C_0}
	\begin{equation}
	\sup_{\overline{M_T}} |u| \leq \sup_{\overline{M}\times \{t=0\}} |u| + \sup_{\overline{M_T}} |u_t|\, T.
	\end{equation}
\end{corollary}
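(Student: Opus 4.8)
The plan is to integrate the time derivative pointwise and then invoke the bound on $u_t$ just established. Fix an arbitrary $(x,t) \in \overline{M_T}$. Since $u$ is an admissible solution of the IBV problem (\ref{main_eq}), it is in particular $C^1$ in $t$ on $[0,t]$ with $u(x,0) = u_0(x)$, so the fundamental theorem of calculus applied in the time variable gives
\begin{equation}
u(x,t) = u(x,0) + \int_0^t u_t(x,s)\, ds.
\end{equation}
I would then take absolute values and estimate the integrand crudely by its supremum:
\begin{equation}
|u(x,t)| \leq |u(x,0)| + \int_0^t |u_t(x,s)|\, ds \leq \sup_{\overline{M}\times\{t=0\}} |u| + t\, \sup_{\overline{M_T}} |u_t| \leq \sup_{\overline{M}\times\{t=0\}} |u| + T\, \sup_{\overline{M_T}} |u_t|,
\end{equation}
where the last inequality uses $t \leq T$. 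Since $(x,t)$ was arbitrary, taking the supremum over $\overline{M_T}$ yields the claimed inequality.

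The only point that needs the preceding work is that the right-hand side is finite: the initial term is bounded since $\overline{M}$ is compact and $u_0$ is continuous, and $\sup_{\overline{M_T}}|u_t|$ is finite by Lemma \ref{u_t_bounds}, which applies under the standing sign condition $\phi_z(x,z) \geq 0$ together with the relevant hypothesis on $\psi$ (either $\psi_z \equiv 0$ or $\psi_t \equiv 0$). I expect no real obstacle here: once the $u_t$ estimate is in hand, the corollary is pure bookkeeping via the fundamental theorem of calculus, and the only minor choice is to trade the sharper pointwise factor $t$ for the uniform factor $T$ so as to obtain a bound that is independent of the time slice.
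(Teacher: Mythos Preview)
Your proof is correct and is precisely the intended argument: the paper states only that ``$C^0$ estimates follow immediately'' and gives no further detail, so the fundamental theorem of calculus in the time variable together with the $u_t$ bound from Lemma~\ref{u_t_bounds} is exactly what is meant.
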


Next, we give the gradient estimates.

\begin{theorem}\label{grad_estimate}
	Suppose $u\in C^{3, 1}(M_T) \cap C^{2,1}(\overline{M_T})$, then 
	\begin{equation}\label{interior_gradient}
	\sup_{\overline{M_T}} |\nabla u|^2 \leq C,
	\end{equation}
	where $C$ depends on $|u|$, $|u_t|$, $||u_0||_{C^1}$, $||\chi||_{C^1}$, $||\psi||_{C^1}$, $||\phi||_{C^3}$, $||d||_{C^3}$, and the background geometric data, but is independent of $|\psi_t|$.
	Here $d$ is defined in (\ref{dist2bd_function}).
\end{theorem}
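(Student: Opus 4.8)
The plan is a maximum-principle argument for an auxiliary function built from $|\nabla u|^{2}$, split into an interior estimate and a boundary estimate, with the latter the real work. Throughout I may assume, by Lemma~\ref{u_t_bounds} and Corollary~\ref{para_C_0}, that $|u|$ and $|u_t|$ are already bounded on $\overline{M_T}$ by constants of the allowed type, and I write $\mathcal{L}[u] := F^{ij}\nabla_i\nabla_j - \partial_t$ with $F^{ij} := F^{ij}(\chi_{ij}+u_{ij})$, which is positive definite by (\ref{ellipticity}) and satisfies $\sum_i F^{ii} \ge \tau > 0$ by (\ref{extracondition_1}) and the footnoted consequence of Lemma~9 in \cite{Sze15}. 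Let $\tilde\phi \in C^{\infty}(\overline{M})$ extend $\phi(x,u(x))$ (so $\|\tilde\phi\|_{C^1}$ is controlled by $\|\phi\|_{C^1}$ and $|\nabla u|$), let $d$ be the globally extended function of (\ref{dist2bd_function}) (with $\nabla d = \nu$ on $\partial M$), and set
\[
w := \big|\nabla u - \tilde\phi\,\nabla d\big|^{2}\, e^{\varphi}, \qquad \varphi := -A\,u + K\,d,
\]
with $A,K>0$ large constants to be fixed. Since $\tilde\phi\,\nabla d$ is bounded, $w \ge \tfrac12|\nabla u|^{2}e^{\varphi} - C$, so it suffices to bound $w$ from above; I then examine where $w$ attains its maximum over $\overline{M_T}$. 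If the maximum is on $\overline{M}\times\{t=0\}$, the bound follows from $\|u_0\|_{C^1}$.

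\emph{Interior case.} If the maximum is at $(x_0,t_0)\in M\times(0,T]$, I would differentiate the equation $u_t = F(\chi_{ij}+u_{ij}) - \psi(x,u,t)$ in a direction $e_k$ of a local orthonormal frame and commute third covariant derivatives using (\ref{ijk-ikj}), obtaining $\mathcal{L}[u](\nabla_k u) = \nabla_k\psi + \psi_z\nabla_k u - F^{ij}\nabla_k\chi_{ij} + (\text{curvature terms bounded by } C(\sum_i F^{ii})|\nabla u|)$. Feeding this into $\mathcal{L}[u]\big|\nabla u - \tilde\phi\nabla d\big|^{2}$, the quadratic second-order term $2\sum_k F^{ij}u_{ik}u_{jk}$ is retained as a nonnegative ``good'' term (cross terms with $\nabla\tilde\phi$ and $\nabla^2 d$ being absorbed into it by Cauchy--Schwarz at the cost of $C\sum_i F^{ii}$), $\mathcal{L}[u]\varphi$ is estimated below using concavity (\ref{concavity}), and everything except the good term is bounded by $C\big(1+\sum_i F^{ii}\big)\big(1+|\nabla u|^{2}\big)$. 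At $(x_0,t_0)$ one has $\nabla w = 0$, i.e. $\nabla_i\big|\nabla u - \tilde\phi\nabla d\big|^{2} = -\big|\nabla u - \tilde\phi\nabla d\big|^{2}\nabla_i\varphi$, and $\mathcal{L}[u]w \le 0$; substituting the gradient relation to control the first-order terms and choosing $A$ and then $K$ large so that the good term together with the $\mathcal{L}[u]\varphi$ contribution dominates the $\sum_i F^{ii}$-errors yields $|\nabla u|^{2}(x_0,t_0)\le C$. Because $\psi$ is differentiated only in $x$ and $u$, never in $t$, and $|u_t|$ enters only as an already-bounded quantity, the resulting constant involves $\|d\|_{C^3}$, $\|\chi\|_{C^1}$, $\|\psi\|_{C^1}$, $\|\phi\|_{C^3}$ but not $|\psi_t|$.

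\emph{Boundary case --- the main obstacle.} Suppose the maximum of $w$ is at $(x_0,t_0)\in\partial M\times(0,T]$. There $\nabla d = \nu$ and $\nabla u - \tilde\phi\nabla d$ equals the tangential part $(\nabla u)^{\mathrm T}$ of $\nabla u$, since $\langle\nabla u,\nu\rangle = u_\nu = \phi = \tilde\phi$; this is precisely the device that removes the otherwise fatal term. Indeed, $\nabla_\nu\big|\nabla u - \tilde\phi\nabla d\big|^{2} = 2\big\langle\nabla_\nu(\nabla u - \tilde\phi\nabla d),\,(\nabla u)^{\mathrm T}\big\rangle$ pairs $\nabla_\nu\nabla u$ against a tangential vector, so only the mixed Hessian $\nabla^2 u(\nu,(\nabla u)^{\mathrm T})$, and not the uncontrolled normal--normal Hessian $\nabla^2 u(\nu,\nu)$, appears. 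Differentiating $u_\nu = \phi(x,u)$ along $(\nabla u)^{\mathrm T}$ expresses $\nabla^2 u(\nu,(\nabla u)^{\mathrm T})$ through $d\phi$, $\phi_z$ and the second fundamental form $I\!\!I$ of $\partial M$, while the $\nabla_\nu(\tilde\phi\nabla d)$ contribution is controlled by $\|\phi\|_{C^1}$ and $\|d\|_{C^2}$; hence $\nabla_\nu w(x_0,t_0) = e^{\varphi}\big(2\,I\!\!I((\nabla u)^{\mathrm T},(\nabla u)^{\mathrm T}) + (K-A\phi)|(\nabla u)^{\mathrm T}|^{2} + (\text{bounded})\,|\nabla u| + \text{bounded}\big)$. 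Choosing $K$ large relative to $\sup_{\partial M}\|I\!\!I\|$, $A\|\phi\|_\infty$ and $\|\phi\|_{C^1}$ forces $\nabla_\nu w(x_0,t_0)>0$ once $|(\nabla u)^{\mathrm T}|(x_0,t_0)$ is large; but at a maximum of $w$ on $\overline{M_T}$ lying on $\partial M$, the inner-normal derivative satisfies $\nabla_\nu w(x_0,t_0)\le 0$, a contradiction. So $|(\nabla u)^{\mathrm T}|(x_0,t_0)\le C$, whence $w(x_0,t_0) = |(\nabla u)^{\mathrm T}|^{2}e^{\varphi}\le C$. I expect the careful bookkeeping in this boundary computation --- in particular verifying that no normal--normal Hessian survives and tracking the dependence on $\|d\|_{C^3}$ and $\|\phi\|_{C^3}$ --- to be the delicate point, the interior estimate being comparatively routine once the good term and $\sum_i F^{ii}\ge\tau$ are in hand.

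In all cases $w\le C$, and therefore $\sup_{\overline{M_T}}|\nabla u|^{2}\le C$ with $C$ depending only on the quantities listed in the statement and independent of $|\psi_t|$, which is (\ref{interior_gradient}).
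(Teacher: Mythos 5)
Your boundary half is essentially the paper's argument: subtracting $\tilde\phi\,\nabla d$ makes the vector tangential on $\partial M$, the differentiated Neumann condition converts the mixed normal--tangential Hessian into $I\!\!I$, $\phi_z\geq 0$, and first-order data, and a large multiple of $d$ in the weight produces the Hopf-type contradiction (this corresponds exactly to the role of $0.5c_0(1-Bd)$ inside the paper's $\eta$). The interior half, however, has a genuine gap on two counts.

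First, the weight has the wrong \emph{structure}. The paper maximizes $\log w - \log\eta$ with $w = 1 + |\nabla v|^2$ and $\eta$ positive and \emph{linear} in $v = u-\phi d$, namely $\eta = -v + 1 + \sup v + 0.5c_0(1-Bd)$. At the critical point the identity $w_i/w = \eta_i/\eta$ makes the two quadratic gradient terms $F^{ij}w_iw_j/w^2$ and $F^{ij}\eta_i\eta_j/\eta^2$ cancel \emph{exactly} (see (\ref{grad103})), so what survives is $F^{ij}w_{ij}/w \le F^{ij}\eta_{ij}/\eta$; and since $\eta$ is linear, $F^{ij}\eta_{ij}$ reduces to $-(1-\phi_u d)F^{ij}U_{ij}$ plus controlled lower-order pieces (see (\ref{F_ij*w_ij_right_ext})). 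Your weight $|\cdot|^2 e^\varphi$ with $\varphi = -Au+Kd$ linear does not produce that cancellation: at a maximum one gets
\[
\mathcal{L}[u]\bigl(|\cdot|^2 e^\varphi\bigr)\,e^{-\varphi}
= \mathcal{L}[u]\bigl(|\cdot|^2\bigr) - |\cdot|^2\,F^{ij}\varphi_i\varphi_j + |\cdot|^2\,\mathcal{L}[u]\varphi,
\]
and $-|\cdot|^2 F^{ij}\varphi_i\varphi_j$ contains $-A^2|\cdot|^2 F^{ij}u_iu_j$, a negative term of fourth order in $|\nabla u|$. Nothing in your sketch absorbs it: $\mathcal{L}[u]\varphi$ is only $O\big((1+\sum_i F^{ii})\big)$, and the Cauchy--Schwarz bound $F^{ij}u_iu_j \le |\nabla u|^2\sum_k F^{kk}$ shows the bad term can be as large as the good term. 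The $\log w - \log\eta$ (equivalently $w/\eta$) structure is what kills this term; an exponential of a linear function of $u$ does not.

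Second, the index-set argument is both missing and, as written, would run with the wrong sign. The paper forces the good quadratic term to dominate via (\ref{f_lambda^2_m}): for $i$ in $I := \{j : \sqrt{n}\,|v_j| \ge |\nabla v|\}$ the critical-point identity yields $(1-\phi_u d)\,U_{ii} \le -w/(4c_0)$, which is strongly \emph{negative}; hence $U_{mm} := \min_j U_{jj} \le U_{ii} < 0$, and since $f_i \le f_j$ whenever $\lambda_i \ge \lambda_j$ (see (\ref{sigma_k_prop_7})), $F^{mm} \ge \tfrac1n\sum_j F^{jj}$, whence $F^{mm}U_{mm}^2 \gtrsim \tfrac{w^2}{c_0^2}\sum_j F^{jj}$, which finally beats the $O(w^{3/2})\sum_j F^{jj}$ errors. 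With your sign $\varphi = -Au + Kd$ the critical-point relation $\nabla_i(|\cdot|^2) = |\cdot|^2(Au_i - Kd_i)$ forces $U_{ii}$ strongly \emph{positive} in the dominant gradient direction, and for eigenvalue operators $F^{ii}$ is then \emph{small}, so you cannot conclude that $F^{ii}U_{ii}^2$ is a definite fraction of $w^2\sum_k F^{kk}$. Replacing $-Au$ by $+Au$ (or $+A(u-\phi d)$) fixes the sign, but you would still have to carry out the index-set step explicitly and still face the unbalanced $-|\cdot|^2 F^{ij}\varphi_i\varphi_j$ of the first point. ``Choosing $A$ and then $K$ large'' is not a substitute for this computation.

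The repair is to replace $e^\varphi$ by a quotient $w/\eta$ with $\eta$ bounded, positive, and \emph{linear} in $v$, so the quadratic gradient terms cancel at the critical point, and then run the index argument with the sign that produces strongly negative $U_{ii}$ --- which is precisely the paper's construction.
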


\begin{proof}
	The proof is motivated by \cite{Guan18}, \cite{GuanXiang18}.
	
	We extend $\phi(x, z)$ smoothly to $\overline{M} \times \mathbb{R}$.
	Let $$v := u - \phi d, \qquad w := 1 + |\nabla v|^2,$$ 
	and 
	\begin{equation}
	\eta := -v + 1 + \sup_{\overline{M_T}} v + 0.5 c_0(1 - Bd),
	\end{equation}
	where $d \;(\leq \delta_0)$ is defined as in (\ref{dist2bd_function}), $c_0 := 2\big(1 + 2 \sup_{\overline{M_T}}|u| + 2\sup_{\overline{M_T}}|\phi(x, u)| \big)$, and $B$ is a positive constant (independent of $d$) to be determined later. 
	Obviously, $c_0$ does not depend on $d$. 
	If we choose $0<\delta_0<1$ small enough (depending on $B$) such that $1 - Bd \geq 0$, then
	\begin{equation}
	1 \leq \eta \leq  1 + 2 \sup_{\overline{M_T}}|v| +  0.5 c_0 < 1 + 2 \sup_{\overline{M_T}}|u| + 2\sup_{\overline{M_T}}|\phi(x, u)| +  0.5 c_0 = c_0.
	\end{equation}
	For $\epsilon > 0$ to be determined later, we can choose $0<\delta_0<1$ small enough such that 
	\begin{equation}\label{C1_delta_0_constraint}
	0.5 \leq 1 - \phi_u d \leq 2, \qquad \big( |\phi_{uu}| + |\phi_{uuu}| \big) d \leq \epsilon \qquad \text{on}\; \overline{M_T}.
	\end{equation}
	Since $w \leq 2(1 - \phi_u d)^2 |\nabla u|^2 + \tilde{C} \leq 8 |\nabla u|^2 + \tilde{C}$ and $w \geq \frac{1}{2} (1 - \phi_u d)^2 |\nabla u|^2 - \tilde{C} \geq \frac{1}{8} |\nabla u|^2 - \tilde{C}$, then
	\begin{equation}\label{C1_w_gra_u}
	\frac{1}{16}|\nabla u|^2 \leq w \leq 9|\nabla u|^2,
	\end{equation}
	where we assume that $|\nabla u|^2 \geq 16\tilde{C}$, otherwise, (\ref{interior_gradient}) is proved. To prove (\ref{interior_gradient}), it suffices to prove $w \leq C$.
	
	Assume $\log w - \log \eta$ attains a maximum at $(x_0, t_0) \in \overline{M_T}$. Without loss of generality, we assume $t_0 > 0$, otherwise, $w(x_0, 0) \leq C_0$ is determined by the initial data and $w \leq c_0 C_0$.
	
	We first consider the case of $x_0 \in \partial M$. Since $\nabla_{\nu} u = \phi$, $d = 0$ and $\nabla_{\nu}d = 1$ on $\partial M$, then $\nabla_{\nu} v = 0$ on $\partial M$. We choose a local orthonormal frame $\{e_i\}$ near $x_0$ such that $e_n = \nu$ on $\partial M$ and $\nabla v = |\nabla v|e_1$ at $x_0$. 
	Since $g(\nabla_\nu e_1, e_1) = 0$, then at $(x_0, t_0)$,
	\begin{equation}
	\nabla_\nu (\nabla_1 v) = \nabla_1 (\nabla_{\nu} v) - g(\nabla v, \nabla_{e_1} \nu) + g(\nabla v, \nabla_{\nu} e_1) = - |\nabla v| g(e_1, \nabla_{e_1} \nu),
	\end{equation}
	and we obtain the following contradiction
	\begin{equation}
	\begin{aligned}
	0 & \geq \eta \nabla_{\nu} w - w \nabla_{\nu} \eta\\
	& = 2\eta \nabla_1 v \, \nabla_{\nu}(\nabla_1 v) + 0.5 c_0 Bw\\
	& = - 2\eta g(e_1, \nabla_{e_1} \nu) |\nabla v|^2 + 0.5 c_0 Bw\\
	& > \big(0.5 c_0 B - 2\eta g(e_1, \nabla_{e_1} \nu) \big)|\nabla v|^2 \geq 0,
	\end{aligned}
	\end{equation}
	if we choose $B = 1 + \max \{ 4 \sup_{x\in \partial M} \sup_{\tau \in T_x \partial M, |\tau|=1} g(\tau, \nabla_\tau \nu) (x), 0 \}$.

	It remains to consider the case of $x_0 \in M$.
	Since $v = u - \phi d$, we have
	\begin{equation}
	v_t = (1 - \phi_u d) u_t,
	\end{equation}
	\begin{equation}
	v_j = (1 - \phi_u d) u_j -\phi_j d - \phi d_j,
	\end{equation}
	\begin{equation}\label{C1_v_ij}
	\begin{aligned}
	v_{ij} =& (1 - \phi_u d) u_{ij} - \phi_{uu} d\, u_i u_j
	- (\phi_u d_j + \phi_{ju}d) u_i - (\phi_u d_i + \phi_{iu}d) u_j\\
	& - \phi_{ij} d - \phi d_{ij} - \phi_i d_j - \phi_j d_i,
	\end{aligned}
	\end{equation}
	\begin{equation}\label{C1_v_ijk}
	\begin{aligned}
	v_{ijk} =& (1 - \phi_u d) u_{ijk} - (\phi_{uu}d\, u_k + \phi_u d_k + \phi_{ku} d)u_{ij} - \phi_{uuu}d\,u_i u_j u_k\\
	&  - (\phi_{uu}d\, u_j + \phi_u d_j + \phi_{ju}d) u_{ik} - (\phi_{uu}d\, u_i + \phi_u d_i + \phi_{iu}d) u_{jk} + Y_{ijk},
	\end{aligned}
	\end{equation}
	where $|Y_{ijk}| \leq C(1 + |\nabla u|^2) \leq Cw$.

	We choose an orthonormal frame $\{e_i\}$ around $x_0$ such that $\nabla_{e_i} e_j (x_0) = 0$ and $U_{ij} := \chi_{ij} + u_{ij}$ is diagonalized at $(x_0, t_0)$.
	Then $(F^{ij}) := \big(F^{ij}(U)\big)$ is also diagonalized at $(x_0, t_0)$.
	At $(x_0, t_0)$, we have
	\begin{equation}\label{grad101}
	\frac{w_j}{w} - \frac{\eta_j}{\eta} = 0,
	\end{equation}
	\begin{equation}\label{grad102}
	\frac{w_t}{w} - \frac{\eta_t}{\eta} \geq 0,
	\end{equation}
	and
	\begin{equation}\label{grad103}
	F^{ij} \left( \frac{w_{ij}}{w} - \frac{w_i w_j}{w^2} - \frac{\eta_{ij}}{\eta} + \frac{\eta_i \eta_j}{\eta^2} \right) \leq 0.
	\end{equation}
	
	Since $\sum_i F^{ii}U_{ii} \geq 0$ and $\sum_i F^{ii} \geq \gamma$ for some constant $\gamma$ depending on the level set of $F$, by (\ref{C1_w_gra_u}), (\ref{C1_v_ij}) (\ref{grad101}) and (\ref{grad103}), we have
	\begin{equation}\label{F_ij*w_ij_right_ext}
	\begin{aligned}
	F^{ij} w_{ij} 
	&\leq \frac{w}{\eta} F^{ij} \eta_{ij}
	= \frac{w}{\eta} F^{ij} (-v_{ij} - 0.5 c_0 B d_{ij})\\
	&\leq \frac{w}{\eta} \Big( - (1 - \phi_u d) F^{ij} U_{ij} + \phi_{uu}\,d\, F^{ij}u_i u_j  \\
	& \qquad + 2 F^{ij} (\phi_u d_j + \phi_{ju}d) u_i + C \sum_i F^{ii} \Big)\\
	& \leq \epsilon C_1 w^2 \sum_i F^{ii} + C w^{3/2} \sum_i F^{ii},
	\end{aligned}
	\end{equation}
	where $C_1$ does not depend on $d$.

	By (\ref{C1_v_ij}) and Cauchy-Schwarz inequality, 
	\begin{equation}
	\begin{aligned}
	\sum_k F^{ij} v_{ki} v_{kj} & = \sum_k F^{ij}\big((1-\phi_u d) U_{ik} - Z_{ik}\big)\big((1-\phi_u d) U_{jk} - Z_{jk}\big) \\
	& \geq \frac{3}{4} (1-\phi_u d)^2 \sum_k F^{ij}  U_{ik}U_{jk} - 3 \sum_k F^{ij} Z_{ik}Z_{jk} \\
	& \geq \frac{3}{4} (1-\phi_u d)^2 F^{ii}  U_{ii}^2 - 6 (\phi_{uu} d)^2 |\nabla u|^2 F^{ii} u_i^2 - C w \sum_i F^{ii} \\
	& \geq \frac{3}{4} (1-\phi_u d)^2 F^{ii}  U_{ii}^2 - C_1 \epsilon^2 w^2 \sum_i F^{ii} - Cw\sum_i F^{ii},
	\end{aligned}
	\end{equation}
	where $Z_{ij} := \phi_{uu} d\, u_i u_j +  (\phi_u d_j + \phi_{ju}d) u_i + (\phi_u d_i + \phi_{iu}d) u_j + \phi_{ij} d + \phi d_{ij} + \phi_i d_j + \phi_j d_i + (1 - \phi_u d)\chi_{ij}$ and the constant $C_1$ does not depend on $d$.
	Differentiating (\ref{main_eq}),
	\begin{equation}\label{1st_diff}
	u_{tk} = F^{ij} (\chi_{ij,k} + u_{ijk}) - (\psi)_k.
	\end{equation}
	Since $w_t = 2 \sum_k v_k v_{kt}$, then by (\ref{1st_diff}),
	\begin{equation}\label{C_1_w_t}
	\begin{aligned}
	&\quad (1 - \phi_u d) \sum_k F^{ij} u_{ijk} v_k\\
	&= 0.5 w_t - (1 - \phi_u d) \sum_k v_k \left[ F^{ij}\chi_{ij,k} - (\psi)_k \right]\\
	& \quad + (\phi_{uu}d) u_t \sum_k u_k v_k + u_t d \sum_k \phi_{ku} v_k + \phi_u u_t \sum_k d_k v_k.
	\end{aligned}
	\end{equation}
	By (\ref{ijk-ikj}), (\ref{C1_delta_0_constraint}), (\ref{C1_v_ijk}), (\ref{grad102}), (\ref{C_1_w_t}), $\sum_i F^{ii} \geq \gamma$, and  Cauchy-Schwarz inequality,
	\begin{equation}\label{F_ij*w_ij_left_ext_2}
	\begin{aligned}
	\sum_k F^{ij} v_k v_{kij} & =  \sum_k F^{ij} v_k \big( v_{ijk} - R^m_{ijk} v_m \big) \\
	& \geq (1 - \phi_u d) \sum_k F^{ij}u_{ijk} v_k  - \phi_{uuu}d\,\sum_k F^{ij} u_i u_j u_k v_k
	\\
	& \,\,\,\,\,\,\, - \sum_k (\phi_{uu}d\, u_k + \phi_u d_k + \phi_{ku} d) v_k\, F^{ij}u_{ij} \\
	& \,\,\,\,\,\,\, - 2 \sum_k F^{ij} (\phi_{uu}d\, u_j + \phi_u d_j + \phi_{ju}d) u_{ik} v_k - Cw^{3/2}\sum_i F^{ii} \\
	& \geq  - C_1 |\phi_{uuu} d| w^2 \sum_i F^{ii} - C_1 w |\phi_{uu} d|\, \sum_i F^{ii}|U_{ii}|\\
	& \,\,\,\,\,\,\, - C\sqrt{w}\sum_i F^{ii}|U_{ii}| - Cw^{3/2}\sum_i F^{ii} + 0.5 w_t \\
	& \geq   - 0.5\epsilon \sum_i F^{ii}U_{ii}^2 - C_1 \Bigg( |\phi_{uuu} d| + \frac{|\phi_{uu} d|^2}{\epsilon} \Bigg) w^2 \sum_i F^{ii} \\
	& \quad - \frac{C}{\epsilon} w^{3/2}\sum_i F^{ii} -\frac{ (1 - \phi_u d) u_t w}{2\eta} \\
	& \geq - 0.5\epsilon F^{ii}U_{ii}^2 - \epsilon C_1 w^2 \sum_i F^{ii}  - \frac{C}{\epsilon} w^{3/2}\sum_i F^{ii},
	\end{aligned}
	\end{equation}
	where the constant $C_1$ does not depend on $d$.
	Hence,
	\begin{equation}\label{F_ij*w_ij_left_ext}
	\begin{aligned}
	F^{ij} w_{ij} & = 2 \sum_k F^{ij} v_{ki} v_{kj} + 2 \sum_k F^{ij} v_k v_{kij}\\
	& \geq \frac{3}{2} (1-\phi_u d)^2 F^{ii}  U_{ii}^2 - \epsilon F^{ii}U_{ii}^2 - \epsilon C_1 w^2 \sum_i F^{ii}  - Cw^{3/2}\sum_i F^{ii}\\
	& \geq (1-\phi_u d)^2 F^{ii}  U_{ii}^2 - \epsilon C_1 w^2 \sum_i F^{ii}  - Cw^{3/2}\sum_i F^{ii},
	\end{aligned}
	\end{equation}
	where we assume $\epsilon \leq 1/8 \leq (1-\phi_u d)^2/2$, and $C_1$ is independent of $d$.

	Let $I:= \{ i : \sqrt{n}|v_i| \geq |\nabla v| \}$. Obviously, $I\neq \emptyset$. For $i\in I$, since $w_i = 2 \sum_k v_k v_{ki}$, by (\ref{grad101}),
	\begin{equation}
	\begin{aligned}
	(1 - \phi_u d)U_{ii} & \leq \frac{w_i}{2v_i} + C_1 |\phi_{uu} d| \frac{w^{3/2}}{|v_i|} + C \frac{w}{|v_i|} = \frac{w \eta_i}{2\eta v_i} + C_1 |\phi_{uu} d| \frac{w^{3/2}}{|v_i|} + C \frac{w}{|v_i|} \\
	& \leq -\frac{ w}{2 c_0} + C_1 \epsilon w + C_2 \frac{w}{|\nabla v|} \leq -\frac{ w}{4 c_0},
	\end{aligned}
	\end{equation}
	where we assume that $\epsilon \leq \frac{1}{8 c_0 C_1}$ and $|\nabla v| \geq 8 c_0 C_2$, and the constant $C_1$ does not depend on $d$. Let $U_{mm} = \min_i \{U_{ii} \}$, then $(1 - \phi_u d) U_{mm} \leq (1 - \phi_u d) U_{ii} \leq -\frac{ w}{4 c_0}$, and $F^{mm} \geq \frac{1}{n} \sum_i F^{ii}$, and hence,
	\begin{equation}\label{f_lambda^2_m}
	(1 - \phi_u d)^2 F^{mm}U_{mm}^2 \geq \frac{w^2}{16nc_0^2}\sum_i F^{ii}.
	\end{equation}
	
	By (\ref{F_ij*w_ij_right_ext}), (\ref{F_ij*w_ij_left_ext}), (\ref{f_lambda^2_m}) and $\sum_i F^{ii} \geq \gamma'$, we have
	\begin{equation}
	\begin{aligned}
	0 & \geq (1-\phi_u d)^2 F^{ii}  U_{ii}^2 - \epsilon C_1 w^2 \sum_i F^{ii}  - \frac{C}{\epsilon} w^{3/2}\sum_i F^{ii}\\
	& \geq \frac{w^2}{16nc_0^2}\sum_i F^{ii} - \epsilon C_1 w^2 \sum_i F^{ii}  - \frac{C}{\epsilon} w^{3/2}\sum_i F^{ii}\\
	& \geq \frac{w^2}{32nc_0^2}\sum_i F^{ii} - C w^{3/2}\sum_i F^{ii},
	\end{aligned}
	\end{equation}
	where we assume $\epsilon \leq \frac{1}{32nc_0^2 C_1}$, and the constant $C_1$ does not depend on $d$.	
	Hence, $w(x_0, t_0) \leq C$. For any $(x, t) \in \overline{M_T}$, 
	\begin{equation}
	w(x, t) \leq \frac{\eta(x, t)}{\eta(x_0, t_0)} w(x_0, t_0) \leq C.
	\end{equation}
\end{proof}

Next, we derive the second-order estimates. Due to technical difficulties, we assume $\overline{M}$ satisfy certain curvatures conditions. 

The following proposition about (parabolic) $\mathcal{C}$-subsolutions is crucial for the second-order estimates.
\begin{proposition}\label{para_sub_key_1}
	Suppose $\underline{u}\in C^2$ is a (parabolic) $\mathcal{C}$-subsolution of the parabolic equation $F(\chi_{ij} + u_{ij}) - u_t = \psi$ on compact manifolds $\overline{M}$.
	Then there exists $\theta > 0$ depending only on $F$, $\chi$, $\underline{u}$, $\inf_{\overline{M}} \psi$ and $\sup_{\overline{M}} \psi$ such that one of the following is true
	\begin{enumerate}
		\item 
		\begin{equation}
		\sum_{i,j} F^{ij} \big( \underline{u}_{ij} - u_{ij} \big) - (\underline{u}_t - u_t) \geq \theta \sum_{i} F^{ii} + \theta,
		\end{equation}
		\item 
		\begin{equation}
		\lambda_{min}(F^{ij}) \geq \theta \sum_{i} F^{ii} + \theta,
		\end{equation}
	\end{enumerate}
	where $F^{ij} := F^{ij}(\chi_{ij} + u_{ij})$, and $\lambda_{min}(F^{ij})$ is the smallest eigenvalue of $(F^{ij})$.
\end{proposition}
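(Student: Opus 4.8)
The plan is to reduce the assertion to a statement about the eigenvalues of $\chi_{ij}+u_{ij}$ and then run Sz\'ekelyhidi's convexity argument \cite[Proposition 6]{Sze15}, adapted to the parabolic setting (as in Phong--T\^o \cite{PhongTo17}) by adjoining $-u_t$ as an auxiliary variable. First, fix $p=(x_0,t_0)\in\overline M\times(0,T]$ and a local orthonormal frame diagonalizing $A:=(\chi_{ij}+u_{ij})(p)$, with eigenvalues $\lambda_1\ge\cdots\ge\lambda_n$. By (\ref{eigenv}) and (\ref{concavity}) one has $(F^{ij})(p)=\mathrm{diag}(f_1,\dots,f_n)$, $f_i=\partial f/\partial\lambda_i$, and concavity together with symmetry of $f$ forces $f_1\le\cdots\le f_n$, so $\lambda_{min}(F^{ij})=f_1$ and $\sum_iF^{ii}=\sum_if_i$. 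Letting $\underline\lambda_1\ge\cdots\ge\underline\lambda_n$ be the eigenvalues of $\underline A:=(\chi_{ij}+\underline u_{ij})(p)$, a standard trace inequality gives $\sum_{i,j}F^{ij}(\underline A_{ij}-A_{ij})\ge\sum_if_i(\underline\lambda_i-\lambda_i)$; hence it suffices to produce a uniform $\theta>0$ with either $\sum_if_i(\underline\lambda_i-\lambda_i)+u_t-\underline u_t\ge\theta\sum_if_i+\theta$ or $f_1\ge\theta\sum_if_i+\theta$. Since $u$ solves the equation, $f(\lambda)=\psi(x_0)+u_t(p)$; I would set $s:=-u_t(p)$, $\underline s:=-\underline u_t(p)$, $\psi_0:=\psi(x_0)$ and work with the concave increasing function $\widetilde f(\lambda,s):=f(\lambda)+s$ on $\Gamma\times\mathbb R$, so that $\widetilde f(\lambda,s)=\psi_0$ and $\nabla\widetilde f(\lambda,s)=\big((f_i),1\big)$. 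By (\ref{extracondition_1}) and its footnote $\sum_iF^{ii}\ge\tau_0>0$ on the relevant range of levels, so it is enough to prove the two alternatives with $\theta\sum_if_i$ in place of $\theta\sum_if_i+\theta$.

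Next I would distill the $\mathcal C$-subsolution hypothesis into a uniform estimate. By Definition \ref{Def_C-sub_parabolic} the set $\{(A',s')\in\Gamma\times\mathbb R:F(A')+s'=\psi_0,\ A'-\underline A\in\Gamma_n,\ s'>\underline s\}$ is bounded at each $p$; eliminating $s'=\psi_0-F(A')$, this is the boundedness of $\{\lambda'\in\Gamma:\lambda'>\underline\lambda,\ f(\lambda')<\psi_0+\underline u_t\}$. As $\underline u\in C^2$ and $\psi$ is bounded, the data $(\underline\lambda(p),\psi_0+\underline u_t(p))$ range over a compact set, so a compactness argument as in \cite{Sze15} yields uniform $\delta\in(0,1)$, $R>0$ depending only on $F,\chi,\underline u,\inf_{\overline M}\psi,\sup_{\overline M}\psi$ with
\[
\{\lambda'\in\Gamma:\ \lambda'\ge\underline\lambda-2\delta\,\mathbf 1,\ f(\lambda')\le\psi_0+\underline u_t+2\delta\}\subseteq\{\,|\lambda'|\le R\,\}
\]
at every $p$. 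Since $\Gamma+\overline{\Gamma_n}\subseteq\Gamma$ and $\underline\lambda$ lies a fixed distance from $\partial\Gamma$, any $\lambda'\ge\underline\lambda-2\delta\mathbf 1$ is at a uniform distance from $\partial\Gamma$; combined with $|\lambda'|\le R$ this places the set above in a fixed compact $\mathcal K\subset\Gamma$, on which $f_1/\sum_if_i\ge\theta_0>0$.

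The dichotomy is then: if $\lambda\ge\underline\lambda-2\delta\mathbf 1$ and $u_t\le\underline u_t+2\delta$ --- equivalently $f(\lambda)\le\psi_0+\underline u_t+2\delta$ --- then $\lambda\in\mathcal K$, so $f_1\ge\theta_0\sum_if_i$ and the second alternative holds. Otherwise, either $\lambda_j<\underline\lambda_j-2\delta$ for some $j$, or $u_t>\underline u_t+2\delta$, and I would establish the first alternative through the convexity argument: concavity of $\widetilde f$ confines the level set $\{\widetilde f=\psi_0\}$ to the half-space $\{(\lambda',s'):\sum_if_i(\lambda'_i-\lambda_i)+(s'-s)\ge0\}$, with normal $\big((f_i),1\big)$; and the uniform boundedness of $\{\widetilde f=\psi_0\}$ among $(\lambda',s')$ with $\lambda'\ge\underline\lambda-2\delta\mathbf 1$, $s'\ge\underline s-2\delta$ --- together with the fact, just recorded, that $(\lambda,s)$ itself fails one of those inequalities (or supplies the gain $u_t-\underline u_t>2\delta$ outright) --- forces the supporting functional to evaluate to at least $\theta\sum_if_i$ at $(\underline\lambda,\underline s)$, i.e.\ $\sum_if_i(\underline\lambda_i-\lambda_i)+(\underline s-s)\ge\theta\sum_if_i$. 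Since $\underline s-s=u_t-\underline u_t$, this is the first alternative, and the additive $+\theta$ is recovered from $\sum_iF^{ii}\ge\tau_0$.

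I expect the main obstacle to be exactly this last step: turning the supporting-hyperplane/boundedness information into a lower bound that is \emph{proportional to $\sum_iF^{ii}$} (not merely bounded below by a constant) and doing so \emph{uniformly} in $p$. A secondary difficulty is bookkeeping the auxiliary variable $s=-u_t$: the parabolic cone $\Gamma\times\mathbb R$ contains the line $\{0\}\times\mathbb R$, so one must verify that this flat direction spoils neither the extraction of $\delta,R$ nor the hyperplane estimate. The reduction to eigenvalues, the trace inequality, and the compact case are routine.
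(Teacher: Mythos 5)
The paper itself does not prove this proposition; it simply cites Lemma~3 of Phong--T\^o \cite{PhongTo17} (for $F$ of eigenvalue type) and Proposition~2.1.17 of \cite{GuoThesis19} (for general $F$). Your outline follows the same route those references take --- reduce to eigenvalues by the von~Neumann trace inequality, adjoin $s=-u_t$ as an extra coordinate so that the parabolic $\mathcal C$-subsolution condition becomes an elliptic one for $\widetilde f(\lambda,s)=f(\lambda)+s$ at the shifted level $\psi_0+\underline u_t$, extract uniform $\delta,R$ by compactness of $\overline M\times[0,T]$, and then run Sz\'ekelyhidi's convexity argument. Up to and including the extraction of $\delta,R$, what you write is correct (and the ``secondary difficulty'' you worry about is harmless: once $\lambda'\in\underline\lambda+\overline{\Gamma_n}$, monotonicity of $f$ gives $s'=\psi_0-f(\lambda')\le\psi_0-f(\underline\lambda)$, so the flat $s$-direction is in fact cut off).

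The one genuine gap is the one you flag yourself, and I would not treat it as a loose end: it is precisely the heart of the proof, and your case split does not make it go away. In the ``otherwise'' branch you know only that $(\lambda,s)$ violates \emph{one} of the constraints defining $\mathcal K$; that alone gives no control on $\sum_j f_j(\underline\lambda_j-\lambda_j)+(\underline s-s)$, which can be arbitrarily negative, so alternative (1) does not ``follow from the convexity argument'' without an explicit computation. The clean way to finish --- which also makes the case split unnecessary --- is Sz\'ekelyhidi's test-point trick. From the uniform $\delta,R$ one arranges
\begin{equation*}
f\bigl(\underline\lambda-\delta\mathbf 1+R e_i\bigr) > \psi_0+\underline u_t+\delta
\qquad\text{for every } i=1,\dots,n,
\end{equation*}
and these points lie in $\Gamma$ since $\underline\lambda-\delta\mathbf 1\in\Gamma$ and $\Gamma+\overline{\Gamma_n}\subset\Gamma$. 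Concavity of $f$ applied at $\lambda$ (where $f(\lambda)=\psi_0+u_t$) then gives, for each $i$,
\begin{equation*}
\sum_j f_j\bigl(\underline\lambda_j-\lambda_j\bigr)-\delta\sum_j f_j+R f_i
\;\ge\;
f\bigl(\underline\lambda-\delta\mathbf 1+R e_i\bigr)-f(\lambda)
\;>\;
\underline u_t-u_t+\delta .
\end{equation*}
If $\sum_j f_j(\underline\lambda_j-\lambda_j)-(\underline u_t-u_t)\ge\tfrac{\delta}{2}\sum_j f_j+\delta$ you already have alternative (1) after the trace inequality. Otherwise the displayed inequality forces $R f_i>\tfrac{\delta}{2}\sum_j f_j$ for \emph{every} $i$, hence $\lambda_{\min}(F^{ij})=f_1>\tfrac{\delta}{2R}\sum_j f_j$, which is alternative (2) once you absorb the additive $+\theta$ using $\sum_i F^{ii}\ge\tau$ from the footnote to (\ref{extracondition_1}). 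Taking $\theta=\min\{\delta/2,\ \delta/(2R),\ \delta\}$, appropriately shrunk against $\tau$, gives the uniform constant. I would replace the $\mathcal K$-dichotomy by this direct computation; as written, the proposal correctly sets up the framework but stops just short of the estimate the proposition actually asserts.
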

\begin{proof}
	The proof for $F(A) = f\big(\lambda(A)\big)$ being a function of eigenvalues can be found in Lemma 3 in \cite{PhongTo17}. The proof for a more general function $F$ which is not necessary to be a function of eigenvalues can be found in Proposition 2.1.17 in \cite{GuoThesis19}.
\end{proof}

\begin{theorem}\label{C2_interior_estimate}
	Suppose $(\overline{M}, g)$ has non-negative sectional curvatures, and the principal curvatures of $\partial M$ is bounded below by a positive constant $\underline{\kappa}$ with $2\underline{\kappa} + \inf_{\partial M} \phi_u > 0$. 
	Suppose either $\psi_u \equiv 0$ or $F(A) = \log \sigma_k \big(\lambda(A)\big)$.
	Suppose $u\in C^{4, 1}(M_T) \cap C^{3, 1}\big(\overline{M}\times (0, T]\big) \cap C^{2, 1}(\overline{M_T})$, and $\underline{u} \in C^{2, 1}\big(\overline{M_T}\big)$ is a (parabolic) $\mathcal{C}$-subsolution of $F(\chi_{ij} + u_{ij}) = u_t + \psi$. Then
	\begin{equation}\label{C2_interior}
	\sup_{\overline{M_T}} |\nabla^2 u| \leq C \left(1 + \sup_{\overline{M_T}}|\phi_{uuu}|\, |\nabla u|\right) \left( 1 + \sup_{\overline{M_T}} |\nabla u|^2 \right) + C \sup_{\partial M \times (0, T]} |u_{\nu\nu}|,
	\end{equation}
	where the constant $C$ depends on $|u|$, $|u_t|$,  $||\phi||_{C^3}$, $||\psi||_{C^2}$, $||u_0||_{C^{2}}$, $||\underline{u}||_{C^{2, 1}}$, $||\chi||_{C^2}$, $||d||_{C^4}$, $\big\{(\chi_{ij} + \underline{u}_{ij})\big\}$, $2\underline{\kappa} + \inf_{\partial M} \phi_u$ and the back ground geometric data, but it is independent of $|\psi_t|$. 
	Here $d$ is defined in (\ref{dist2bd_function}).
\end{theorem}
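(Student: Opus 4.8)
The goal is to bound $\lambda_{\max}(\chi_{ij}+u_{ij})$ on $\overline{M_T}$ (from which $|\nabla^2 u|$ follows, since $\nabla^2 u$ differs from $\chi_{ij}+u_{ij}$ by $\chi$ and an upper bound on the largest eigenvalue of an admissible $\chi_{ij}+u_{ij}$ controls its norm) by a maximum principle argument. Writing $U_{ij}:=\chi_{ij}+u_{ij}$, the plan is to use the test function
$$W\ :=\ \log\big(U_{\xi\xi}\big)\ +\ a\,|\nabla u|^2\ +\ b\,(\underline u-u),$$
where $\xi$ is (a smooth substitute near) the top eigendirection of $U$, $a$ is a universal constant, and $b$ is a constant that will be permitted to depend on $\sup_{\overline{M_T}}|\nabla u|^2$ --- this dependence being the origin of the factor $1+\sup|\nabla u|^2$ in (\ref{C2_interior}). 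Near $\partial M$ one replaces $a|\nabla u|^2$ by the barrier $a|\nabla(u-\phi\,d)|^2$ from the proof of Theorem \ref{grad_estimate}; its second $\mathcal{L}$-derivatives bring in triple $z$-derivatives of $\phi$ and account for the factor $1+\sup|\phi_{uuu}|\,|\nabla u|$.

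Let $(x_0,t_0)$ be a maximum point of $W$. If $t_0=0$ we are done by the initial data. If $x_0\in\partial M$ and $\xi=\nu$ there, then $U_{\xi\xi}=\chi_{\nu\nu}+u_{\nu\nu}$ is absorbed into the term $\sup_{\partial M\times(0,T]}|u_{\nu\nu}|$ on the right of (\ref{C2_interior}). If $x_0\in\partial M$ with $\xi$ not normal, we differentiate the Neumann condition $u_\nu=\phi(x,u)$ tangentially once --- controlling the mixed second derivatives $u_{\tau\nu}$ by $\|\phi\|_{C^1}$, $|\nabla u|$ and the second fundamental form --- and then run the Hopf boundary argument on $W$, in which the convexity hypothesis $2\underline{\kappa}+\inf_{\partial M}\phi_u>0$ is used quantitatively to produce a sign contradiction; hence this sub-case does not occur. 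The remaining case, $x_0\in M$ and $t_0>0$, is the main one.

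At an interior maximum, $\nabla_i W=0$, $\partial_t W\ge0$ and $F^{ij}\nabla_i\nabla_j W\le0$ give $\mathcal{L}[\log U_{11}]+a\,\mathcal{L}[|\nabla u|^2]+b\,\mathcal{L}[\underline u-u]\le0$, where $\mathcal{L}[v]:=F^{ij}\nabla_i\nabla_j v-\partial_t v$ as in the proof of Lemma \ref{u_t_bounds} and $U$ is diagonalized at $(x_0,t_0)$ with $U_{11}$ maximal. Differentiating $F(U)=u_t+\psi$ once and twice in the direction $e_1$, I use: concavity (\ref{concavity}) in its strong form, bounding $-F^{ij,kl}\nabla_1U_{ij}\nabla_1U_{kl}$ below by the extra good gradient terms $\sum_{i\ge2}\frac{F^{ii}-F^{11}}{U_{11}-U_{ii}}(\nabla_1U_{1i})^2$; the commutation formulas (\ref{ijk-ikj}), (\ref{ijkl-klij}) to pass from $\nabla_1\nabla_1U_{ij}$ to $\nabla_i\nabla_jU_{11}$, the curvature correction being $\sum_iF^{ii}\,\mathrm{Sec}(e_1,e_i)(U_{11}-U_{ii})$ plus terms of size $C(1+|\nabla u|^2)\sum_iF^{ii}$, where \emph{the non-negativity of the sectional curvature together with the maximality of $U_{11}$ makes the correction have a favorable sign}; the $t$-derivatives cancelling $\partial_tU_{11}$ up to the bounded $-\partial_t\chi_{11}$; and, for $\nabla_1\nabla_1\psi$, the hypothesis ``$\psi_u\equiv0$ or $F=\log\sigma_k$'' --- the term $\psi_uU_{11}$ is absent in the first case and controlled in the second via the homogeneity identity $\sum_{ij}F^{ij}U_{ij}=k$ and the a priori bounds on $\log\sigma_k$ along the solution. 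The term $\mathcal{L}[|\nabla u|^2]$ produces the positive $2\sum_{i,k}F^{ii}(\nabla_i\nabla_ku)^2$, and the dangerous $-\sum_iF^{ii}(\nabla_iU_{11})^2/U_{11}^2$ from $\mathcal{L}[\log U_{11}]$ is controlled --- using the critical relation $\nabla_iW=0$ --- by an index-splitting (separating $i$ with $F^{ii}$ comparable to $F^{11}$ from the rest) together with the extra good terms from strong concavity, exactly as in \cite{Sze15} and \cite{GuanXiang18}; the leftover first-order errors are $O\!\big((1+|\nabla u|^2)\sum_iF^{ii}\big)+O\!\big(1+|\phi_{uuu}|\,|\nabla u|^3\big)$.

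Finally, invoke Proposition \ref{para_sub_key_1}: in its first alternative $\mathcal{L}[\underline u-u]=F^{ij}(\underline u_{ij}-u_{ij})-(\underline u_t-u_t)\ge\theta\sum_iF^{ii}+\theta$, and choosing $b$ large (depending on $\sup|\nabla u|^2$) lets $b\,\mathcal{L}[\underline u-u]$ absorb the $O\!\big((1+|\nabla u|^2)\sum F^{ii}\big)$ errors, after which the inequality forces $U_{11}(x_0,t_0)\le C(1+\sup|\phi_{uuu}||\nabla u|)(1+\sup|\nabla u|^2)+C\sup_{\partial M\times(0,T]}|u_{\nu\nu}|$; in the second alternative $(F^{ij})$ is uniformly elliptic relative to its trace and the same index-splitting/concavity scheme yields the same bound. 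Since $W$ is maximized at $(x_0,t_0)$ and its non-logarithmic part is bounded, this bounds $U_{11}$ on all of $\overline{M_T}$, giving (\ref{C2_interior}). \textbf{The main obstacle} is precisely the interior estimate: organizing the many third-order terms so that every unbounded contribution is either of good sign (non-negative sectional curvature plus maximality of $U_{11}$), or absorbed by the good gradient terms from the strong concavity inequality after index-splitting, or absorbed by $b\theta\sum F^{ii}$ --- and making the $\psi_u\equiv0$ case, the $F=\log\sigma_k$ case, and both alternatives of Proposition \ref{para_sub_key_1} all terminate with the \emph{same} final bound (\ref{C2_interior}).
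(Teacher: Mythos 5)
Your overall strategy (maximize a perturbed top eigenvalue of $U$, split interior vs.\ boundary, use Proposition \ref{para_sub_key_1} to absorb errors) is the right shape, but the boundary treatment is missing the one structural ingredient that the paper relies on, and as written the boundary case has a genuine gap.

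The paper does not work with $U_{\xi\xi}$ alone; it maximizes
$W(\xi,\xi):=U_{\xi\xi}+\tilde W(\xi,\xi)$ (plus $\eta$), where $\tilde W$ is the tensor in (\ref{tilde(W)}) built from $\nu=\nabla d$ and the Neumann data. The reason is Lemma \ref{W_bd_lemma}: precisely because $u_\nu=\phi$ on $\partial M$, the modification $\tilde W$ cancels the tangential–normal cross terms, so that on $\partial M$ the maximizer $\xi$ of $W(\xi,\xi)$ is \emph{always} either tangential or equal to $\nu$ — the mixed-direction case never arises. Your test function $\log(U_{\xi\xi})+a|\nabla u|^2+b(\underline u-u)$ lacks this modification. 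For a boundary maximum attained at a mixed $\xi$, one has $U_{\xi\xi}=|\xi'|^2U_{\hat\xi'\hat\xi'}+2g(\xi,\nu)U_{\xi'\nu}+g(\xi,\nu)^2U_{\nu\nu}$: the cross term $U_{\xi'\nu}$ is controlled by one tangential derivative of $u_\nu=\phi$ as you note, but $U_{\nu\nu}$ is not, and when you try to run a Hopf argument you would need information about $\nabla_\nu U_{\nu\nu}$, i.e.\ a third normal derivative of $u$, which the Neumann condition does not give. The claim that the non-normal boundary sub-case ``does not occur'' is therefore unjustified (and it is also not what the paper does: in the paper the tangential case \emph{does} occur and is handled in Case~1 by differentiating the boundary condition twice tangentially and invoking $\phi_u-2\Gamma^1_{1n}\ge 2\underline\kappa+\inf\phi_u>0$ to produce a bound that still carries a $|u_{\nu\nu}|$ term). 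Replacing $a|\nabla u|^2$ by $a|\nabla(u-\phi d)|^2$ near $\partial M$ is a first-order barrier that helps with gradient-type terms, but it is not a substitute for the second-order tensor correction $\tilde W$; the Hessian of that barrier brings in $\phi_{uuu}$ as you say, but does not cancel the $U_{\nu\nu}$ contribution in a mixed direction.

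A smaller point: for the interior estimate the paper uses only the weak concavity $F^{ij,kl}U_{ij,\tau}U_{kl,\tau}\le0$ (dropping the term entirely) rather than the strong-concavity/index-splitting machinery of \cite{Sze15}; the quadratic gradient term $2\sum_kF^{ij}v_{ki}v_{kj}$ coming from $\zeta(|\nabla u|^2)=B|\nabla u|^2$ together with the choice of $B$ (adapted to the $\log\sigma_k$ case via $F^{11}U_{11}^2\ge\frac kn U_{11}$) and the $\mathcal C$-subsolution alternative already suffice, yielding a cleaner proof. Your heavier concavity argument may also close the interior case, but the boundary gap above must be repaired by introducing the $\tilde W$-adjusted tensor and proving the analogue of Lemma \ref{W_bd_lemma}.
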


We extend $\nu$ in $\overline{M}$ by $\nu = \nabla d$, where $d$ is defined in (\ref{dist2bd_function}). Motivated by \cite{LionsTrudUrbas86}, \cite{MaQiu19} and \cite{GuanXiang18}, for $x \in \overline{M}$, and $\xi, \sigma \in T_x M$, we consider the following symmetric $(0, 2)$-tensors
\begin{equation}\label{tilde(W)}
\begin{aligned}
\tilde{W}(\xi, \sigma) :=& - g(\xi, \nu) \Big( \nabla_{\sigma'}(\phi) - \nabla_{\nabla_{\sigma'}\nu} u  + \chi_{\sigma'\nu}\Big) \\
& \quad - g(\sigma, \nu) \Big( \nabla_{\xi'}(\phi) - \nabla_{\nabla_{\xi'}\nu} u  + \chi_{\xi'\nu}\Big),
\end{aligned}
\end{equation}
and
\begin{equation}\label{W_func}
W(\xi, \sigma) := U_{\xi\sigma} + \tilde{W}(\xi, \sigma),
\end{equation}
where $U_{ij} := \chi_{ij} + u_{ij}$ and $\xi' := \xi - g(\xi, \nu)\nu$.

First, we consider the interior second-order estimates.

Suppose $\sup_{\xi \in T_x \overline{M},\, |\xi|=1} \left[W(\xi , \xi)(x, t) + \eta\right] $ is attained at $(x_0, t_0) \in M_T$ with $\xi = \tau \in T_{x_0} \overline{M}$ and $|\tau| = 1$, where $\eta = \eta (|\nabla u|, u)$ is to be determined later.
We choose an orthonormal frame $\{e_i\}$ around $x_0$ such that $\nabla_{e_i} e_j (x_0) = 0$ and the matrix $\left( U_{ij} \right)(x_0, t_0)$ is diagonalized with $U_{11}(x_0, t_0)$ as the largest eigenvalue. Note that $(F^{ij}) := \big(F^{ij}(U)\big)$ is also diagonalized at $(x_0, t_0)$. We extend $\tau$ to a neighborhood of $x_0$ such that $\nabla_{e_i} \tau (x_0) = 0$ for all $1 \leq i \leq n$. Then at $(x_0, t_0) \in M_T$,
\begin{equation}
U_{\tau\tau, i} + \tilde{W}_{\tau\tau, i} + \eta_i = 0,
\end{equation}
\begin{equation}\label{C2_CrPt_2nd_order}
F^{ij} \left( u_{\tau\tau ij} + \chi_{\tau\tau, ij} + \tilde{W}_{\tau\tau, ij} + \eta_{ij} \right) - u_{\tau\tau t} - \tilde{W}_{\tau\tau, t} - \eta_t \leq 0.
\end{equation}
Differentiating (\ref{main_eq}) once and twice, we have
\begin{equation}\label{1st_diff_C2}
u_{t\tau} = F^{ij} (\chi_{ij,\tau} + u_{ij\tau}) - (\psi)_\tau
\end{equation}
\begin{equation}\label{2nd_diff_C2}
u_{t\tau\tau} = F^{ij} u_{ij\tau\tau} + F^{ij} \chi_{ij,\tau\tau} + F^{ij,kl} U_{ij,\tau} U_{kl,\tau} - (\psi)_{\tau\tau}.
\end{equation}
By (\ref{ijk-ikj}) and (\ref{1st_diff_C2}),
\begin{equation}\label{C2_ineq_7}
\begin{aligned}
F^{ij} u_{kij} - u_{kt} &= F^{ij} u_{ijk} - F^{ij} u_m R^m_{ijk} - u_{tk}\\
&= (\psi)_k - F^{ij} \chi_{ij,k} - F^{ij} u_m R^m_{ijk}.
\end{aligned}
\end{equation}
By (\ref{ijkl-klij}) and (\ref{2nd_diff_C2}),
\begin{equation}\label{C2_ineq_1}
\begin{aligned}
& \qquad u_{\tau\tau t} - F^{ij} u_{\tau\tau ij} - F^{ij} \chi_{\tau\tau, ij}\\
&= -(\psi)_{\tau\tau} + F^{ij} \chi_{ij,\tau\tau} - F^{ij} \chi_{\tau\tau, ij} + F^{ij,kl} U_{ij,\tau} U_{kl,\tau} \\
& \quad -2F^{ij} u_{im} R^m_{\tau\tau j} - 2F^{ij} u_{m\tau} R^m_{i\tau j} - F^{ij} u_m \big( R^m_{\tau\tau i,j} + R^m_{i\tau j,\tau} \big)
\end{aligned}
\end{equation}
Since $W_{\tau\tau} (x_0, t_0)$ is the largest eigenvalue of $W(\cdot, \cdot)(x_0, t_0)$, then there exists an orthonormal frame $\{ E_\alpha \}_{\alpha=1}^{n}$ at point $x_0$ such that $E_1 = \tau$ and the $n\times n$ matrix $(W_{\alpha\beta})(x_0, t_0)$ is diagonalized. Let $e_i = a^\alpha_i E_\alpha$, and $F^{\alpha\beta} := a^\alpha_i F^{ij} a^{\beta}_j$, then the matrix $(F^{\alpha\beta})$ is positive definite. Since sectional curvatures of $\overline{M}$ are non-negative, then the $n\times n$ matrix $(R_{\tau\alpha\tau\beta})$ ($\tau$ is fixed) is symmetric and positive definite. Hence,
\begin{equation}\label{C2_ineq_2}
\begin{aligned}
- F^{ij} u_{m\tau} R^m_{i\tau j} &= - F^{ij} W _{m\tau} R^m_{i\tau j} + F^{ij} (\chi_{m\tau} + \tilde{W}_{m\tau}) R^m_{i\tau j} \\
&= - F^{\alpha\beta} W_{\tau\tau} R_{\tau\alpha\tau\beta} + F^{ij} (\chi_{m\tau} + \tilde{W}_{m\tau}) R^m_{i\tau j}\\
& \leq  C (1 + |\nabla u|^2) \sum_i F^{ii},
\end{aligned}
\end{equation}
where we assume $W_{\tau\tau} \geq 0$.
By Cauchy-Schwarz inequality,
\begin{equation}\label{C2_ineq_3}
-2F^{ij} u_{im} R^m_{\tau\tau j} \leq \frac{1}{4} \sum_i F^{ii} U_{ii}^2 + C \sum_{i} F^{ii}.
\end{equation}
By concavity of $F$, 
\begin{equation}\label{C2_ineq_4}
F^{ij,kl} U_{ij,\tau} U_{kl,\tau}  \leq 0.
\end{equation}
Combining (\ref{C2_ineq_1}), (\ref{C2_ineq_2}), (\ref{C2_ineq_3}), (\ref{C2_ineq_4}), we have
\begin{equation}\label{C2_ineq_5}
u_{\tau\tau t} - F^{ij} u_{\tau\tau ij} - F^{ij} \chi_{\tau\tau, ij} \leq \frac{1}{4} \sum_i F^{ii} U_{ii}^2 + CK \sum_{i} F^{ii} - (\psi)_{\tau\tau},
\end{equation}
where $K := \max_{\overline{M_T}} (1 + |\nabla u|^2).$
By (\ref{tilde(W)}), (\ref{C2_ineq_7}) and Cauchy-Schwarz inequality, 
\begin{equation}\label{C2_ineq_6}
\begin{aligned}
& \qquad \tilde{W}_{\tau\tau, t} - F^{ij}\tilde{W}_{\tau\tau, ij} \\
& \leq 2 g_{\tau\nu} \Big(\nabla_{\nabla_{\tau'}\nu} u_t - F^{ij} \nabla_i \nabla_j \nabla_{\nabla_{\tau'}\nu} u\Big)
- 2 g_{\tau\nu} \phi_u (u_{\tau' t} - F^{ij} u_{\tau' ij})\\
& \quad + \sum_i F^{ii} Z_i U_{ii} + C K\Big(1 + |\phi_{uuu}| K^{1/2}\Big) \sum_i F^{ii} + CK \\
& \leq  \frac{1}{4} \sum_i F^{ii} U_{ii}^2 + C K\Big(1 + |\phi_{uuu}| K^{1/2}\Big) \sum_i F^{ii} + CK,
\end{aligned}
\end{equation}
where $|Z_i| \leq CK^{1/2}$.
Hence, by (\ref{C2_CrPt_2nd_order}), (\ref{C2_ineq_5}), (\ref{C2_ineq_6}), we have
\begin{equation}\label{C2_ineq_right}
F^{ij} \eta_{ij} - \eta_t \leq \frac{1}{2} \sum_i F^{ii} U_{ii}^2 + C K \Big(1 + |\phi_{uuu}| K^{1/2}\Big) \sum_i F^{ii} + CK + |\psi_u U_{\tau\tau}|.
\end{equation}

Let 
\begin{equation}\label{eta_C2}
\eta := \zeta (|\nabla u|^2) + A (\underline{u} - u),
\end{equation}
where the function $\zeta$ and the constant $A$ are to be determined later. Then
\begin{equation}
\eta_i = 2 \zeta' \sum_k u_k u_{ki} + A (\underline{u} - u)_i,
\end{equation}
\begin{equation}\label{eta_ij_C2}
\eta_{ij} = 4 \zeta'' \sum_{k, l} u_k u_{ki} u_l u_{lj} + 2 \zeta' \sum_k u_{kj} u_{ki} + 2\zeta' \sum_k u_k u_{kij}  + A (\underline{u} - u)_{ij},
\end{equation}
\begin{equation}\label{eta_t_C2}
\eta_t = 2\zeta' \sum_k u_k u_{kt} + A (\underline{u} - u)_t.
\end{equation}
Let $\zeta (z) := Bz$, then $\zeta' = B,\zeta'' =0$, where $B$ is a positive constant to be determined later. Then by (\ref{eta_ij_C2}), (\ref{eta_t_C2})  and (\ref{C2_ineq_7}) and Cauchy-Schwarz inequality, we have
\begin{equation}\label{C2_ineq_left}
\begin{aligned}
& \qquad F^{ij} \eta _{ij} - \eta_t\\
&\geq 2 B \sum_k F^{ij} u_{kj} u_{ki} + A F^{ij} (\underline{u} -u)_{ij} - A(\underline{u} - u)_t - CK(1 + \sum_i F^{ii}) \\
& \geq B \sum_i F^{ii} U_{ii}^2 + A F^{ij} (\underline{u} -u)_{ij} - A(\underline{u} - u)_t - CK(1 + \sum_i F^{ii}).
\end{aligned}
\end{equation}
If $F(A) = \log \sigma_k \big(\lambda(A)\big)$, as in \cite{MaQiu19}, by (\ref{sigma_k_prop_5}), we have
\begin{equation}
F^{11}U_{11}^2 \geq \frac{k}{n} U_{11} \geq \frac{k}{n^2} |U_{\tau\tau}|.
\end{equation}
Hence, we choose $B := 1 + \frac{n^2}{k} \sup |\psi_u|$, which also works for the case of $\psi_u \equiv 0$.
Then by (\ref{C2_ineq_right}) and (\ref{C2_ineq_left}), we have
\begin{equation}\label{C2_ineq_8}
\begin{aligned}
0 &\geq \frac{1}{2} \sum_i F^{ii} U_{ii}^2 + A F^{ij} (\underline{u} -u)_{ij} - A(\underline{u} - u)_t \\
&\quad -  C_1 K \Big(1 + |\phi_{uuu}| K^{1/2}\Big) \sum_i F^{ii} - C_1 K.
\end{aligned}
\end{equation}
By Proposition \ref{para_sub_key_1}, there exists a constant $\theta$ such that one of the following holds:
\begin{align}
&(1) \qquad F^{ij} (\underline{u} - u)_{ij} - (\underline{u} - u)_t \geq \theta \sum_i F^{ii} + \theta,
\label{C2_ineq_9.1}\\
&(2) \qquad F^{kk} \geq \theta \sum_i F^{ii} + \theta, \qquad \forall\, 1 \leq k \leq n. \label{C2_ineq_9.2}
\end{align}
Now we choose $A = \frac{C_1 K \big(1 + |\phi_{uuu}| K^{1/2}\big)}{\theta}$. When (\ref{C2_ineq_9.1}) holds, by (\ref{C2_ineq_8}), we have the following contradiction
\begin{equation}
0 \geq \frac{1}{2} \sum_i F^{ii} U_{ii}^2 + \big[A\theta -  C_1 K \big(1 + |\phi_{uuu}| K^{1/2}\big)\big] \sum_i F^{ii} + A\theta - C_1 K  > 0.
\end{equation}
When (\ref{C2_ineq_9.2}) holds, by (\ref{C2_ineq_8}), we have
\begin{equation}
0 \geq \left[\frac{\theta}{2} U_{11}^2 -  C_1 K \big(1 + |\phi_{uuu}| K^{1/2}\big) \right] \sum_i F^{ii} + \frac{\theta}{2} U_{11}^2 -  C_1 K,
\end{equation}
which implies
\begin{equation}
U_{11} \leq C K^{3/4},
\end{equation}
and hence
\begin{equation}\label{C2_ineq_10}
W_{\tau\tau} \leq U_{\tau\tau} + CK \leq U_{11} + CK \leq C K.
\end{equation}

Next, we consider the second-order estimates on boundary.
On $\partial M$, we have the following lemma.
\begin{lemma}\label{W_bd_lemma}
	For any $x\in \partial M$ and $\xi \in T_x\overline{M}$ with $|\xi| = 1$, we have
	\begin{equation}
	W(\xi, \xi) \leq \max \Big\{ \sup_{\sigma \in T_x \partial M, |\sigma|=1} W(\sigma, \sigma),\; W(\nu, \nu) \Big\}.
	\end{equation}
\end{lemma}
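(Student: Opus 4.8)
The plan is to treat $W(\cdot,\cdot)$ as a symmetric bilinear form on $T_x\overline M$ and to show that, on $\partial M$, its mixed tangential–normal entries vanish; then the bound follows from the variational characterization of the largest eigenvalue. First I would check that $\tilde W(\cdot,\cdot)$ from (\ref{tilde(W)}), and hence $W(\cdot,\cdot)$ from (\ref{W_func}), really is a symmetric bilinear form: each term in (\ref{tilde(W)}) is the product of $g(\cdot,\nu)$, which is linear in its argument, with the expression $\nabla_{(\cdot)'}(\phi)-\nabla_{\nabla_{(\cdot)'}\nu}u+\chi_{(\cdot)'\nu}$, which is linear in $(\cdot)'$ and hence in $(\cdot)$ because $\xi\mapsto\xi'$ is linear; symmetry is built into the definition, and $U_{\xi\sigma}$ is symmetric and bilinear as well. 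Consequently $W(\xi,\xi)\le\lambda_{max}(W)$ for every unit $\xi\in T_x\overline M$, where $\lambda_{max}(W)$ denotes the largest eigenvalue of the symmetric operator associated to $W$ via the metric.

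The crux is a short computation on $\partial M$. Since $\nu'=0$ there, the definition gives $\tilde W(\nu,\nu)=0$, so $W(\nu,\nu)=U_{\nu\nu}$. For $\sigma\in T_x\partial M$ we have $\sigma'=\sigma$, $g(\sigma,\nu)=0$ and $g(\nu,\nu)=1$, so (\ref{tilde(W)}) reduces to $\tilde W(\sigma,\nu)=-\bigl(\nabla_\sigma(\phi)-\nabla_{\nabla_\sigma\nu}u+\chi_{\sigma\nu}\bigr)$ and therefore $W(\sigma,\nu)=u_{\sigma\nu}-\nabla_\sigma(\phi)+\nabla_{\nabla_\sigma\nu}u$. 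Now differentiate the Neumann condition $u_\nu=\phi$ in the tangential direction $\sigma$ along $\partial M$: using $\nabla_\sigma(u_\nu)=u_{\sigma\nu}+\nabla_{\nabla_\sigma\nu}u$ one gets $u_{\sigma\nu}=\nabla_\sigma(\phi)-\nabla_{\nabla_\sigma\nu}u$, hence $W(\sigma,\nu)=0$ for all $\sigma\in T_x\partial M$. (This is precisely the reason the correction tensor $\tilde W$ was introduced.)

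Finally, pick an orthonormal basis $e_1,\dots,e_{n-1},\nu$ of $T_x\overline M$ with $e_1,\dots,e_{n-1}\in T_x\partial M$. By the previous paragraph the matrix $\bigl(W(e_a,e_b)\bigr)$ is block diagonal, with an $(n-1)\times(n-1)$ block $\bigl(W(e_i,e_j)\bigr)_{1\le i,j\le n-1}$ and the scalar block $W(\nu,\nu)$; its eigenvalues are therefore those of $W|_{T_x\partial M}$ together with $W(\nu,\nu)$, so $\lambda_{max}(W)=\max\{\,\sup_{\sigma\in T_x\partial M,\,|\sigma|=1}W(\sigma,\sigma),\,W(\nu,\nu)\,\}$. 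Combining this with $W(\xi,\xi)\le\lambda_{max}(W)$ yields the claim. The only substantive point is the vanishing of $W(\sigma,\nu)$ on $\partial M$; everything after that is linear algebra, so I do not expect a genuine obstacle, merely bookkeeping to make sure the bilinearity and the tangential differentiation of the boundary condition are stated with the paper's sign conventions.
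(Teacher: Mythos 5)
Your proof is correct and rests on the same essential fact as the paper's: the mixed tangential--normal entries of $W$ vanish on $\partial M$, a consequence of the construction of $\tilde W$ together with the boundary condition $u_\nu=\phi$. The paper encodes this implicitly through the identity $W(\xi,\xi)=|\xi'|^2\, W\bigl(\tfrac{\xi'}{|\xi'|},\tfrac{\xi'}{|\xi'|}\bigr)+(1-|\xi'|^2)\,W(\nu,\nu)$ applied to a maximizing $\xi$, whereas you isolate $W(\sigma,\nu)=0$ explicitly and finish via block-diagonalization and the variational characterization of the largest eigenvalue---an equivalent and arguably more transparent packaging of the same argument.
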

\begin{proof}
	Suppose $\xi \in T_x\overline{M}$ satisfies $|\xi| = 1$ and $W(\xi, \xi) = \sup_{\sigma \in T_x \overline{M}, |\tau|=1} W(\tau, \tau)$. It suffices to consider the case of $0< |\xi'| <1$.
	Since $|\xi|^2 = |\xi'|^2 + \big(g(\xi, \nu)\big)^2 |\nu|^2$, then
	\begin{equation}\label{Lemma_W_1}
	\begin{aligned}
	u_{\xi\xi} &= u_{\xi'\xi'} + 2 g(\xi, \nu) u_{\xi'\nu} + \big(g(\xi, \nu)\big)^2 u_{\nu\nu}\\
	&= u_{\xi'\xi'} + 2 g(\xi, \nu) \left( \nabla_{\xi'}(u_\nu) - \nabla_{\nabla_{\xi'}\nu}u \right) + (1 - |\xi'|^2) u_{\nu\nu}, 
	\end{aligned}
	\end{equation}
	and
	\begin{equation}\label{Lemma_W_2}
	\chi_{\xi\xi} = \chi_{\xi'\xi'} + 2 g(\xi, \nu) \chi_{\xi'\nu} + (1 - |\xi'|^2) \chi_{\nu\nu}.
	\end{equation}
	Since $u_\nu = \phi$ on $\partial M$, then by (\ref{tilde(W)}), (\ref{W_func}), (\ref{Lemma_W_1}), (\ref{Lemma_W_2}),
	\begin{equation}
	\begin{aligned}
	W(\xi, \xi) &= |\xi'|^2 W\Big(\frac{\xi'}{|\xi'|}, \frac{\xi'}{|\xi'|}\Big) + (1 - |\xi'|^2) W(\nu, \nu)\\
	&\leq |\xi'|^2 W(\xi, \xi) + (1 - |\xi'|^2) W(\nu, \nu),
	\end{aligned}
	\end{equation}
	and hence,
	\begin{equation}
	W(\xi, \xi) \leq W(\nu, \nu).
	\end{equation}
\end{proof}

Suppose $\sup_{\xi \in T_x \overline{M},\, |\xi|=1} \left[W(\xi , \xi)(x, t) + \eta\right] $ is attained at $(x_0, t_0) \in \partial M \times (0, T]$ with $\xi = \tau \in T_{x_0} \overline{M}$ and $|\tau| = 1$, where $\eta$ is defined in (\ref{eta_C2}). By Lemma \ref{W_bd_lemma}, either $\tau \in T_{x_0}\partial M$ or $\tau = \nu$. 

Case 1: Suppose $\tau \in T_{x_0}\partial M$. Since for any $\xi \in T_{x_0} \partial M$, $U_{\xi\xi} = W(\xi, \xi) \leq W(\tau, \tau) = U_{\tau\tau}$, we can choose an orthonormal frame $\{e_i\}$ around $x_0$ such that $e_n = \nu$ on $\partial M$, and $e_1 = \tau$ at $x_0$, and the $(n-1)\times (n-1)$ matrix $\left( U_{ij} \right)_{i,j < n}(x_0, t_0)$ is diagonalized. 

Fix $t = t_0$. Since $W(e_1, e_1) + \eta = U_{11} + \eta$ attains a local maximum at $x_0  \in \partial M$, then at $(x_0, t_0)$ we have
\begin{equation}\label{U_11 e^eta_bd}
0 \geq \nabla_n U_{11} + \eta_n.
\end{equation}
For any $k<n$, since $e_k \in T_{x_0}\partial M$ and $g(\nabla_k e_n, e_n) = 0$, then 
\begin{equation}\label{u_kn}
u_{nk} = \nabla_k(\nabla_n u) - \nabla_{\nabla_k e_n}u = \phi_k + \phi_u u_k -\sum_{l<n} \Gamma_{kn}^l u_l,
\end{equation}
and by Lemma \ref{lemma_u_ijk},
\begin{equation}\label{u_11n}
\begin{aligned}
u_{n11} &= \nabla_{1}\nabla_1(u_n) - 2\sum_i \Gamma_{1n}^i u_{1i} - \nabla_{\nabla_{1}\nabla_1 e_n}u\\
&= \phi_u U_{11} - \phi_u \chi_{11} + \phi_{11} + 2\phi_{1u} u_1 + \phi_{uu} u_1^2\\
& \quad - 2\Gamma_{1n}^1 U_{11} + 2\sum_{i<n}\Gamma_{1n}^i \chi_{1i} - 2\Gamma_{1n}^n u_{1n} - \nabla_{\nabla_{1}\nabla_1 e_n}u\\
&\geq (\phi_u - 2\Gamma_{1n}^1) U_{11} - C(1 + |\nabla u|^2).
\end{aligned}
\end{equation}
Since $\Gamma_{n1}^1 = g(\nabla_n e_1, e_1) = 0$ under the orthonormal frame, then by (\ref{ijk-ikj}), (\ref{u_kn}) and (\ref{u_11n}),
\begin{equation}\label{nabla_n U_11}
\begin{aligned}
\nabla_n U_{11} &= \nabla_n u_{11} + \nabla_n \chi_{11}\\
&= u_{11n} + 2 \sum_{k=2}^{n-1}\Gamma_{n1}^{k}u_{1k} + 2\Gamma_{n1}^n u_{1n} + \nabla_n \chi_{11}\\
&= u_{n11} + R_{11n}^m u_m - 2 \sum_{k=2}^{n-1}\Gamma_{n1}^{k}\chi_{1k} + 2\Gamma_{n1}^n u_{1n} + \nabla_n \chi_{11}\\
&\geq (\phi_u - 2\Gamma_{1n}^1) U_{11} - C(1 + |\nabla u|^2).
\end{aligned}
\end{equation}
By (\ref{eta_C2}) and (\ref{u_kn}), 
\begin{equation}\label{eta_n_C2}
\eta_n = 2 \sum_{k}\zeta' u_k u_{kn} + A (\underline{u}_n - u_n) \geq - C (1 + |\phi_{uuu}| K^{1/2})K - 2B |\phi|\, |u_{\nu\nu}|.
\end{equation}
By (\ref{U_11 e^eta_bd}), (\ref{nabla_n U_11}) and (\ref{eta_n_C2}),
\begin{equation}
(\phi_u - 2\Gamma_{1n}^1)  U_{11} \leq C (1 + |\phi_{uuu}| K^{1/2}) K + 2B |\phi|\,|u_{\nu\nu}|.
\end{equation}
Since $\phi_u -2 \Gamma_{1n}^1 = \phi_u + 2 b_{11} \geq 2\underline{\kappa} + \inf_{\partial M} \phi_u > 0$, where $b_{ij}$ is the second fundamental form of $\partial M$, then
\begin{equation}\label{C2_ineq_11}
W_{\tau\tau} = U_{11} \leq C(1 + |\phi_{uuu}| K^{1/2}) K + \frac{2B|\phi|}{2\underline{\kappa} + \inf_{\partial M} \phi_u}|u_{\nu\nu}|.
\end{equation}

Case 2: Suppose $\tau = \nu$. Then obviously,
\begin{equation}\label{C2_ineq_12}
W_{\tau\tau} \leq CK + |u_{\nu\nu}|.
\end{equation}

Suppose $\sup_{\xi \in T_x \overline{M},\, |\xi|=1} \left[W(\xi , \xi)(x, t) + \eta\right] $ is attained at $(x_0, t_0) = (x_0, 0)$ and $\tau \in T_{x_0}$ with $|\tau| = 1$, then $W_{\tau\tau} \leq C$.

In conclusion, for any $(x, t) \in \overline{M_T}$ and $\xi \in T_x \overline{M}$ with $|\xi| = 1$, by (\ref{C2_ineq_10}), (\ref{C2_ineq_11}), (\ref{C2_ineq_12}), we have
\begin{equation}\label{C2_ineq_13}
\begin{aligned}
U_{\xi\xi}(x, t) &\leq W(\xi, \xi)(x, t) + CK \leq W(\tau, \tau)(x_0, t_0) + CK \\
&\leq C(1 + |\phi_{uuu}| K^{1/2})K + \gamma \sup_{\partial M \times [0, T]} |u_{\nu\nu}|,
\end{aligned}
\end{equation}
where $\gamma := \max \{ 1,\frac{2B \sup_{\partial M} |\phi|}{2\underline{\kappa} + \inf_{\partial M} \phi_u}  \}$.
Since $\Gamma \subset \Gamma_1$, then (\ref{C2_interior}) holds.

\section{Estimates of $u_{\nu\nu}$ for $k$-Hessian equations}\label{estimates.sec_3}

We will derive the second-order normal-normal estimates for the IBV problem (\ref{main_eq}) when $F = \log \sigma_k$ for $k\geq 2$. For $k=1$, it is a uniformly parabolic equation. 

\begin{theorem}\label{sigma_k_u_nn_estimates}
	Let $F(A) = \log \sigma_k \big(\lambda(A)\big)$ on $\Gamma_k$. 
	Suppose the principal curvatures of $\partial M$ is bounded below by a positive constant $\underline{\kappa}$ with $\underline{\kappa} > 0$.
	Suppose $u\in C^{4,1}(M_T) \cap C^{3,1}\big(\overline{M_T}\big)$ is an admissible solution of the IBV problem (\ref{main_eq}) satisfying (\ref{C2_interior}), then
	\begin{equation}\label{C2_nn_ineq_0}
	M := \sup_{\partial M \times [0, T]} |u_{\nu\nu}| \leq C,
	\end{equation}
	where the constant $C$ depends on $||u||_{C^{1,1}}$, $||\phi||_{C^3}$, $||\psi||_{C^2}$, $||u_0||_{C^{3, 1}}$, $||\underline{u}||_{C^{2,1}}$, $||\chi||_{C^2}$, $||d||_{C^4}$, $\big\{(\chi_{ij} + \underline{u}_{ij})\big\}$, $2\underline{\kappa} + \inf_{\partial M} \phi_u$ and the back ground geometric data, but it is independent of $|\psi_t|$.
	Here $d$ is defined in (\ref{dist2bd_function}).
\end{theorem}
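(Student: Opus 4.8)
This is the last ingredient in the global $C^2$ bound: by (\ref{C2_interior}) and Lemma \ref{W_bd_lemma} the full second-order estimate follows once we bound $M:=\sup_{\partial M\times[0,T]}|u_{\nu\nu}|$, so the plan is to adapt the boundary double-normal estimate of Lions--Trudinger--Urbas and Ma--Qiu. Extend $\nu$ by $\nu=\nabla d$ (with $d$ as in (\ref{dist2bd_function})), extend $\phi(x,z)$ smoothly to $\overline M\times\mathbb R$, and work in a thin collar $M_\delta=\{d\le\delta\}$. The quantity to barrier is the Neumann defect
\begin{equation}
v:=\nabla_\nu u-\phi(\cdot,u)=u_kd_k-\phi(\cdot,u),
\end{equation}
which vanishes identically on $\partial M$; since $u_{\nu\nu}=\nabla_\nu v+\nabla_\nu\phi$ on $\partial M$ with the last term bounded by Theorem \ref{grad_estimate}, it suffices to bound $\nabla_\nu v$ from both sides on $\partial M$. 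A Hopf-type mechanism reduces this to producing, on $M_\delta$, nonnegative barriers $\Theta^\pm$ with $\Theta^\pm=0$ on $\partial M$, $\Theta^\pm\ge\pm v$ on $\{d=\delta\}$ (possible since $|v|\le C$ there, again by the $C^1$ bound), and $\mathcal L[u]\Theta^\pm\le\mathcal L[u](\pm v)$ in $M_\delta$, where $\mathcal L[u]=F^{ij}\nabla_i\nabla_j-\partial_t$.

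The next step is to compute $\mathcal L[u]v$. Differentiating (\ref{main_eq}) once as in (\ref{1st_diff}), commuting covariant derivatives through (\ref{ijk-ikj}), and using the Euler identity $\sum_{i,j}F^{ij}U_{ij}=k$ valid for $F=\log\sigma_k$, the two copies of $\nabla_\nu u_t$ cancel and one is left with
\begin{equation}
\mathcal L[u]v=2\,F^{ij}u_{kj}\,\nabla_i\nabla_k d+E,\qquad |E|\le C\Big(\sum_i F^{ii}+1\Big),
\end{equation}
the remainder $E$ being controlled by Theorem \ref{grad_estimate}, Lemma \ref{u_t_bounds}, $\|\phi\|_{C^3}$ and $\|d\|_{C^4}$. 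Because $\underline\kappa>0$, after shrinking $\delta$ the Hessian $\nabla^2 d$ is negative semidefinite in $M_\delta$, so diagonalizing $U=(\chi_{ij}+u_{ij})$ (hence $(F^{ij})$) at a point bounds the cross term by $C\sum_{\lambda_i<0}|\lambda_i|F^{ii}+C\sum_iF^{ii}$. One then argues by a dichotomy at the boundary point $(x_0,t_0)$ realizing $M$: assuming $M$ large, $|U_{\nu\nu}(x_0,t_0)|\asymp M$ and $|\lambda_i|\le C(1+M)$ by (\ref{C2_interior}); expanding $\sigma_k(U)$ about the $\nu$-direction and using that $\sigma_k(U)=e^{u_t+\psi}$ is bounded while the mixed entries $U_{i\nu}$ are bounded by tangential differentiation of $u_\nu=\phi$, one sees the largeness of $U_{\nu\nu}$ is accompanied by a large eigenvalue $\lambda_1$ of $U$ together with a compensating very negative $\lambda_n$. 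This places us in the hypotheses of Lemma \ref{sigma_k_lemma} with $\mu_1,\mu_2$ fixed constants (extracted from (\ref{C2_interior}) and $\underline\kappa>0$), giving $\lambda_{\min}(F^{ij})\ge c\sum_iF^{ii}$, i.e.\ uniform ellipticity relative to $\sum_iF^{ii}$, together with enough $\sigma_k$-algebra to make the cross term $O(\sum_iF^{ii})$ with constant independent of $M$; hence $|\mathcal L[u]v|\le C(\sum_iF^{ii}+1)$. With that, one builds $\Theta^\pm=A\big(u-\underline u+\sup(\underline u-u)\big)+a\big(d-d^2/(2\delta)\big)$ and uses Proposition \ref{para_sub_key_1} to pick $a$ then $A$, both independent of $M$, so that $\mathcal L[u]\Theta^\pm\le\mathcal L[u](\pm v)$; then $\pm v\le\Theta^\pm$ in $M_\delta$ forces $\pm\nabla_\nu v(x_0)\le\nabla_\nu\Theta^\pm(x_0)\le C$, so $M\le C$.

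The main obstacle is exactly the cross term $2F^{ij}u_{kj}\nabla_i\nabla_k d$ and its circular dependence on $M$ through (\ref{C2_interior}): a collar barrier can dominate a source of size $(\text{const})\times\sum_iF^{ii}$ but not $M\times\sum_iF^{ii}$, so everything hinges on showing that, in the regime where $M$ is large, the part of $\sum_{\lambda_i<0}|\lambda_i|F^{ii}$ carrying the growth is absorbed by the ellipticity $\sum_iF^{ii}$ supplied by Lemma \ref{sigma_k_lemma} — equivalently, that the $O(M)$ part of $\nabla^2 u$ at $(x_0,t_0)$ is organized in the way Lemma \ref{sigma_k_lemma} demands. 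Carrying out this $\sigma_k$-bookkeeping and verifying the hypotheses $\mu_2\lambda_{\max}\le\lambda_1\le-\mu_1\lambda_{\min}$ from the geometry of $\partial M$ is the delicate step, and is precisely where the strict lower bound $\underline\kappa>0$ on the principal curvatures (rather than merely $2\underline\kappa+\inf_{\partial M}\phi_u>0$ as in Theorem \ref{C2_interior_estimate}) is needed. The lower bound $u_{\nu\nu}\ge-C$ is the $\Theta^-$ case, with the extra input $\sigma_1(U)>0$ (from $\Gamma\subset\Gamma_1$) keeping $U_{\nu\nu}$ from being very negative once the tangential directions are under control.
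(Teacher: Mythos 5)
Your high-level plan (work in a collar, estimate the Neumann defect $v:=\nabla_{\nabla d}u-\phi$ via a barrier, and invoke Lemma~\ref{sigma_k_lemma} to leverage $\underline\kappa>0$) is the right family of ideas, but the argument as written has a gap at exactly the spot you flag as ``the main obstacle,'' and the mechanism by which the paper resolves that obstacle is different from what you propose.

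The central difficulty is that the claimed bound $|\mathcal L[u]v|\le C\big(\sum_iF^{ii}+1\big)$ with $C$ \emph{independent of $M$} is not true pointwise in the collar: after diagonalizing $U$ the cross term is $\sim\sum_iF^{ii}U_{ii}\,d_{ii}$, and the part with $U_{ii}<0$ gives $\sum_{U_{ii}<0}F^{ii}|U_{ii}|$, which is only bounded \emph{a priori} by $M\sum_iF^{ii}$ via (\ref{C2_interior}). The Euler identity $\sum_iF^{ii}U_{ii}=k$ does not control this sum, and Lemma~\ref{sigma_k_lemma} only helps once its hypothesis $\mu_2\lambda_{\max}\le\lambda_1\le-\mu_1\lambda_{\min}$ is verified, which you cannot do at a generic interior point of the collar. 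You propose to verify it ``at the boundary point $(x_0,t_0)$ realizing $M$,'' but that is the wrong location for a barrier comparison: to conclude $\pm v\le\Theta^\pm$ you need $\mathcal L[u]\Theta^\pm\le\mathcal L[u](\pm v)$ \emph{everywhere} in $M_\delta$, so the dichotomy has to be played at the interior critical point of the comparison function, where first-order information is available — not on $\partial M$.

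The paper avoids this circularity with two devices absent from your barrier. First, the auxiliary function is $W=-(1+\beta\rho)(\nabla_{\nabla\rho}u-\phi-q)+(A+\tfrac{M}{2})\rho$ rather than your $\Theta^\pm\mp v$: the prefactor $(1+\beta\rho)$ causes the first-order condition $W_i=0$ at an interior minimum to \emph{force} $U_{ii}\sim A+\tfrac{M}{2}$ for all indices $i$ with $|\rho_i|$ bounded below, and the dichotomy in Lemma~\ref{sigma_k_lemma} is then applied at that very point using these manufactured large eigenvalues. Second — and this is what ultimately closes the estimate — the paper deliberately puts $\tfrac{M}{2}\rho$ into the barrier. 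The price is that the Hopf/normal-derivative step yields $u_{\nu\nu}\le C+A+\tfrac{M}{2}$ (and $u_{\nu\nu}\ge-C-A-\tfrac{M}{2}$), i.e.\ a bound of the form $M\le C+A+\tfrac{M}{2}$ with coefficient $\tfrac12<1$, which is then solved for $M$. Your $\Theta^\pm=A(u-\underline u+\sup(\underline u-u))+a(d-d^2/(2\delta))$ has no $M$ in it, so to dominate a source of size $O(M\sum F^{ii})$ you would be forced to take $a\sim M$, which then gives $|\nabla_\nu v|\lesssim M$ and nothing closes. In short: the absorption of the $M$-dependent source is not achieved by ellipticity alone; it is achieved by deliberately building $M/2$ into the barrier and exploiting the factor $\tfrac12$.

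Two further mismatches worth recording: the paper does \emph{not} use the $\mathcal C$-subsolution (Proposition~\ref{para_sub_key_1}) in the $u_{\nu\nu}$ estimate — the dependence on $\underline u$ in the statement of Theorem~\ref{sigma_k_u_nn_estimates} is inherited only through (\ref{C2_interior}); and the case split the paper uses at the critical point is between $U_{11}\ge-\tfrac{3\gamma_2}{\gamma_1}\lambda_{\min}$ (handled by Maclaurin/Newton-type $\sigma_k$ identities and the forced size of $U_{11}$) and $U_{11}\le-\tfrac{3\gamma_2}{\gamma_1}\lambda_{\min}$ (handled by Lemma~\ref{sigma_k_lemma}), with the index $1$ picked so that $\rho_1^2\ge\tfrac{1}{4n}$ — not by inspecting $U_{\nu\nu}$ at the boundary extremum as you suggest.
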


Theorem \ref{sigma_k_u_nn_estimates} is a direct conclusion of Proposition \ref{C2_nn_sup_u_nn} and Proposition \ref{C2_nn_inf_u_nn} in the following.
The proof is a motivated by Ma-Qiu \cite{MaQiu19}.

Let $$\rho (x) := d - d^2,$$ where $d$ is defined as in (\ref{dist2bd_function}). Let $$M_{\delta} := \{ x \in M : 0< d(x) < \delta \} \qquad \text{and}\qquad M_{\delta, T} := M_{\delta} \times (0, T]$$ with $0 < \delta < \frac{\delta_0}{2}$ to be determined later. 

Given any $x \in \partial M$, we choose an orthonormal frame $\{e_i\}$ in a neighborhood of $x$ such that $e_1, \cdots, e_{n-1} \in T_{x}\partial M$ and $e_n = \nu$ at $x$. Since $\nabla d = \nu$ at $x$, $|\nabla d| \equiv 1$ near $x$, and $\nabla_i \nabla_j d = \nabla_i (\nabla_j d) - \nabla_{\nabla_i e_j}d$, then at $x$ we have
\begin{equation}
\big(\rho_{ij}\big)_{n\times n} = \begin{pmatrix} 
-\big( h_{ij}\big)_{(n-1)\times (n-1)} & 0 \\
0 & -2 
\end{pmatrix},
\end{equation}
and
\begin{equation}
-\max\{\overline{\kappa}, 2\} g \leq \nabla^2 \rho \leq - \min\{ \underline{\kappa}, 2 \} g,
\end{equation}
where $(h_{ij})$ is the second fundamental form, and $\overline{\kappa}$ and $\underline{\kappa}$ are upper and lower bounds of principal curvatures of $\partial M$. 
If we choose $0 < \delta_0 \leq \frac{1}{2}$ small enough such that for any $x\in M_\delta$, we have
\begin{equation}
-\gamma_0 g \leq \nabla^2 \rho \leq - \gamma_1 g,
\end{equation}
and
\begin{equation}
\frac{1}{2} \leq |\nabla \rho| = (1 - 2d) |\nabla d| \leq 1,
\end{equation}
\begin{equation}
\frac{1}{2}d \leq \rho \leq d,
\end{equation}
where
$$\gamma_0 := 2\max\{\overline{\kappa}, 2\}, \qquad \gamma_1 := \frac{1}{2}\min\{ \underline{\kappa}, 2 \}.$$

By (\ref{C2_interior}), there exists $C_0 > 0$ such that for any $(x, t) \in \overline{M_T}$ and $\xi \in T_x \overline{M}$ with $|\xi| = 1$, we have
\begin{equation}\label{C2_nn_0}
|U_{\xi\xi}| \leq C_0 (1 + M).
\end{equation}

\begin{proposition}\label{C2_nn_sup_u_nn}
	There exist large $A, \beta$ and small $\delta$ such that
	\begin{equation}
	W := - (1 + \beta \rho) \left( \nabla_{\nabla \rho} u - \phi - q \right) + (A + \frac{M}{2}) \rho \geq 0 \qquad \text{in} \; \overline{M_{\delta, T}},
	\end{equation}
	where $q(x) := \nabla_{\nabla \rho} u_0 - \phi(x, u_0)$. Consequently, 
	\begin{equation}\label{C2_nn_ineq_1}
	\sup_{\partial M \times [0, T]} u_{\nu\nu} \leq C + \frac{1}{2} M,
	\end{equation}
	where the constant $C$ depends on $C_0$, $||d||_{C^3}$, $||\chi||_{C^1}$, $||u||_{C^{1, 1}}$, $||\phi||_{C^2}$, $||u_0||_{C^{3}}$, $|\psi|$, $|\nabla \psi|$, $k$, $n$ and the background geometric data.
\end{proposition}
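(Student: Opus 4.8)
The plan is a barrier argument in the collar $\overline{M_{\delta,T}}$, in the spirit of Ma--Qiu. Write $\Phi := \nabla_{\nabla\rho}u - \phi(x,u) - q$, so $W = -(1+\beta\rho)\Phi + (A+\tfrac M2)\rho$, and set $\mathcal{L} := F^{ij}\nabla_i\nabla_j - \partial_t$ and $\mathcal{F} := \sum_i F^{ii}$. First I would check that $W\ge 0$ on the parabolic boundary of $M_{\delta,T}$. On $\overline{M_\delta}\times\{0\}$ one has $u=u_0$, hence $\Phi\equiv 0$ by the very definition of $q$ and $W=(A+\tfrac M2)\rho\ge 0$. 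On $\partial M\times[0,T]$ one has $\rho=0$, $\nabla\rho=\nu$ and $u_\nu=\phi(x,u)$, while compatibility of the initial data gives $q|_{\partial M}=(u_0)_\nu-\phi(x,u_0)=0$; thus $\Phi=0$ and $W=0$. On $\{d=\delta\}\times[0,T]$, Theorem~\ref{grad_estimate} bounds $|\Phi|$ by a constant independent of $M$ while $\rho\ge\delta/2$, so $W\ge -(1+\beta\delta)C+(A+\tfrac M2)\tfrac\delta2\ge 0$ once $A$ is large relative to $\beta$ and $\delta$. It then remains to rule out a negative interior minimum of $W$.

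Suppose $W$ attains a minimum at $(x_0,t_0)$ with $x_0\in M_\delta$, $t_0>0$ and $W(x_0,t_0)<0$; I will contradict $\mathcal{L}[W](x_0,t_0)\ge 0$. By the product rule
\[
\mathcal{L}[W]=-(1+\beta\rho)\mathcal{L}[\Phi]-2\beta F^{ij}\nabla_i\rho\,\nabla_j\Phi-\beta\Phi\,F^{ij}\rho_{ij}+\big(A+\tfrac M2\big)F^{ij}\rho_{ij}.
\]
The leading term satisfies $(A+\tfrac M2)F^{ij}\rho_{ij}\le -(A+\tfrac M2)\gamma_1\mathcal{F}$ by $\nabla^2\rho\le-\gamma_1 g$. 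In $\mathcal{L}[\Phi]$ the third covariant derivatives of $u$ are eliminated by differentiating the equation once, $F^{ij}u_{kij}-u_{kt}=\nabla_k\psi-F^{ij}\nabla_k\chi_{ij}-F^{ij}u_mR^m_{ijk}$ — this is exactly where the estimate remains independent of $|\psi_t|$; the term $\phi_u(F^{ij}u_{ij}-u_t)$ is controlled by the Euler identity $F^{ij}U_{ij}=k$ for $F=\log\sigma_k$ together with $|u_t|\le C$ (Lemma~\ref{u_t_bounds}); the curvature and $\chi$ contributions are $O(\mathcal{F})$. What survives is $2F^{ij}\rho_{ki}u_{kj}=2\sum_i F^{ii}\rho_{ii}U_{ii}+O(\mathcal{F})$ together with the corresponding piece of $2\beta F^{ij}\nabla_i\rho\,\nabla_j\Phi$; by the bound $|U_{\xi\xi}|\le C_0(1+M)$ from (\ref{C2_nn_0}) these are a priori only $O\big((1+M)\mathcal{F}\big)$, with an unfavorable sign carried by the largest eigenvalue of $U$.

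Absorbing those $O\big((1+M)\mathcal{F}\big)$ terms is the main obstacle, since $A$ may not be allowed to depend on $M$. Here I would use the first-order condition $\nabla W=0$ at $(x_0,t_0)$, namely $(1+\beta\rho)\nabla_j\Phi=\rho_j\big((A+\tfrac M2)-\beta\Phi\big)$, to rewrite $-2\beta F^{ij}\nabla_i\rho\,\nabla_j\Phi=-\tfrac{2\beta}{1+\beta\rho}\big((A+\tfrac M2)-\beta\Phi\big)F^{ij}\rho_i\rho_j$; since $W(x_0,t_0)<0$ together with $\rho\ge 0$ and the gradient bound forces $0<\Phi\le C$, for $A$ large this is at most $-c\beta(A+\tfrac M2)F^{ij}\rho_i\rho_j\le -\tfrac{c}{4}\beta(A+\tfrac M2)F^{11}$, where $U_{11}$ is the largest eigenvalue of $U$ and $F^{11}$ the smallest diagonal entry of $(F^{ij})$. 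It then suffices that $F^{11}\gtrsim\mathcal{F}$ precisely in the regime where $2\sum_i F^{ii}\rho_{ii}U_{ii}$ is genuinely of size $(1+M)\mathcal{F}$; this is exactly the content of Lemma~\ref{sigma_k_lemma}, which gives $F^{11}\ge c(n,k)\mathcal{F}$ once $U$ also has a comparably large negative eigenvalue, while in the complementary regime all $|U_{ij}|$ are controlled by a fixed constant times the largest eigenvalue and the term in question acquires a favorable sign. Fixing $\beta$ large, then $\delta$ small so that $\beta\delta\le 1$, then $A$ large, produces the contradiction $\mathcal{L}[W](x_0,t_0)<0$; hence $W\ge 0$ in $\overline{M_{\delta,T}}$ by the maximum principle and the boundary step.

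Finally, $W\ge 0$ in $M_\delta$ and $W=0$ on $\partial M\times[0,T]$ force the inward normal derivative $\nabla_\nu W\ge 0$ there. On $\partial M$ one has $\rho=0$, $\nabla_\nu\rho=1$, $\Phi=0$, and a direct computation gives $\nabla_\nu\Phi=u_{\nu\nu}+O(1)$, so $\nabla_\nu W=-\nabla_\nu\Phi+(A+\tfrac M2)\ge 0$ yields $u_{\nu\nu}\le A+\tfrac M2+C$ on $\partial M\times[0,T]$, which is (\ref{C2_nn_ineq_1}). By construction $\beta,\delta,A$ and all error constants depend only on the data listed in the statement and not on $|\psi_t|$.
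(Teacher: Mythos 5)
Your outline matches the paper's strategy in its broad strokes: the parabolic boundary check, the interior maximum principle, the elimination of third derivatives via the differentiated equation, the exploitation of the first-order condition $\nabla W=0$, the appeal to Lemma~\ref{sigma_k_lemma}, and the Hopf-type normal derivative argument at the end. The boundary and initial checks and the final Hopf step are correct.

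However, there is a genuine gap in how you dispose of the bad term $-2(1+\beta\rho)\sum_i F^{ii}\rho_{ii}U_{ii}$. You correctly identify that the unfavorable (positive) contribution to $\mathcal{L}[W]$ comes from the directions with $U_{ii}>0$, and that a priori it is of size $\lambda_{\max}\sum_i F^{ii}\lesssim(1+M)\sum_i F^{ii}$. Your plan is to kill it with the good term $-c\beta(A+\tfrac M2)F^{ij}\rho_i\rho_j$ after invoking Lemma~\ref{sigma_k_lemma} to get $\min_i F^{ii}\gtrsim\sum_i F^{ii}$. But Lemma~\ref{sigma_k_lemma}, with $\lambda_1=\lambda_{\max}$ as you propose, requires $\lambda_{\max}\le-\mu_1\lambda_{\min}$, i.e.\ a comparably large \emph{negative} eigenvalue. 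In the complementary case where $-\lambda_{\min}$ is small relative to $\lambda_{\max}\sim A+M$, the lemma does not apply, $\min_iF^{ii}$ may be arbitrarily small compared with $\sum_i F^{ii}$, and your good term is useless; yet the bad term $\sum_{U_{ii}>0}F^{ii}U_{ii}\le\lambda_{\max}\sum_i F^{ii}\sim(A+M)\sum_iF^{ii}$ is still of the same order as the leading favorable term $-(A+\tfrac M2)\gamma_1\sum_iF^{ii}$ and there is no reason its coefficient is smaller. Your claim that in this complementary regime ``the term in question acquires a favorable sign'' is not correct: with $\rho_{ii}<0$ and $U_{ii}>0$ dominant, the contribution $-2(1+\beta\rho)F^{ii}\rho_{ii}U_{ii}$ to $\mathcal{L}[W]$ is \emph{positive}, i.e.\ still unfavorable.

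The ingredient you are missing is Euler's identity applied to this specific term, not merely to the $\phi_u$ piece. The paper writes $\sum_{U_{ii}>0}F^{ii}U_{ii}=k-\sum_{U_{ii}<0}F^{ii}U_{ii}\le k+(-\lambda_{\min})\sum_iF^{ii}$, which converts the bad term's coefficient from $\lambda_{\max}$ to $-\lambda_{\min}$ (plus a harmless constant). Then the case split $U_{11}\lessgtr-\tfrac{3\gamma_2}{\gamma_1}\lambda_{\min}$ (with $U_{11}$ the eigenvalue in the $\rho$-dominant direction, pinned near $A+M$ by the first-order condition) gives exactly the dichotomy you need: either $-\lambda_{\min}$ is controlled by a \emph{small} multiple of $A+M$, so the Euler-rewritten bad term is absorbed by $-(A+\tfrac M2)\gamma_1\sum_iF^{ii}$; or $-\lambda_{\min}$ is comparable to $U_{11}$ and $\lambda_{\max}$, in which case Lemma~\ref{sigma_k_lemma} (applied with $\lambda_1=U_{11}$, not necessarily $\lambda_{\max}$) gives $F^{11}\gtrsim\sum_iF^{ii}$ and the $\beta$-term wins. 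Without this Euler step the absorption fails in the first case, so you should add it; with it, your proposal becomes a correct variant of the paper's argument.
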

\begin{proof}
	
	If we choose $\delta \leq  \frac{1}{\beta}$, then in $M_\delta$,
	\begin{equation}
	1 \leq 1 + \beta \rho \leq 1 + \beta \delta \leq 2.
	\end{equation}

	Since $\nabla \rho = \nu$ on the boundary $\partial M$, then it is easy to see that $W \equiv 0$ on $\partial M$. On $\partial M_{\delta} \setminus \partial M$, we have
	\begin{equation}
	W  \geq - C_1  + \frac{1}{2} A \delta \geq 0,
	\end{equation}
	if we choose $A \geq 2C_1 / \delta$, where the constant $C_1 = C_1 \left( || u ||_{C^1}, || d||_{C^1}, |\phi|, || u_0||_{C^1} \right)$.
	At $t = 0$, since $0 \leq d \leq \delta \leq \frac{1}{2}$, then $W = (A + \frac{M}{2}) d (1 - d) \geq 0$. Hence, $W \geq 0$ on the parabolic boundary $\partial M_{\delta, T}$. 
	
	Suppose $W$ attains a minimum at $(x_0, t_0) \in M_{\delta, T}$. 
	We choose an orthonormal frame $\{e_i\}$ around $x_0$ such that $\nabla_{e_i} e_j (x_0) = 0$ and the matrix $\left( U_{ij} \right)(x_0, t_0)$ is diagonalized. Then $F^{ij} := F^{ij}(U)(x_0, t_0)$ is also diagonalized. Then at $(x_0, t_0)$, we have
	\begin{equation}\label{C2_nn_1}
	0 \geq W_t = - (1 + \beta\rho) (\sum_k u_{tk} \rho_k - \phi_{u} u_t),
	\end{equation}
	\begin{equation}\label{C2_nn_2}
	\begin{aligned}
	0 = W_i &= - (1 + \beta\rho) \left(\sum_k u_{ki} \rho_k + \sum_k u_{k} \rho_{ki} - \phi_{i} - \phi_{u} u_{i} - q_{i} \right) \\
	&\qquad - \beta \rho_{i} (\sum_{k} u_{k}\rho_{k} - \phi - q) + (A + \frac{M}{2}) \rho_{i},
	\end{aligned}
	\end{equation}
	\begin{equation}\label{C2_nn_3}
	\begin{aligned}
	0 \leq W_{ii} &= - (1 + \beta\rho) \Bigg(\sum_k u_{kii} \rho_k + 2\sum_k u_{ki} \rho_{ki} + \sum_k u_{k} \rho_{kii} - \phi_{ii} - 2 \phi_{iu} u_{i}\\
	& \qquad - \phi_{u} u_{ii} - \phi_{uu} u_{i}^2 - q_{ii} \Bigg) 
	- \beta \rho_{ii} (\sum_{k} u_{k}\rho_{k} - \phi - q) + (A + \frac{M}{2}) \rho_{ii}\\
	&\qquad - 2 \beta\rho_{i} \left(\sum_k u_{ki} \rho_k + \sum_k u_{k} \rho_{ki} - \phi_{i} - \phi_{u} u_{i} - q_{i} \right),
	\end{aligned}
	\end{equation}
	and by (\ref{C2_ineq_7}),
	\begin{equation}\label{C2_nn_4}
	\begin{aligned}
	0 \leq \sum_i F^{ii} W_{ii} - W_t
	&\leq C_2 \beta \sum_i F^{ii} - (1 + \beta\rho) \sum_i ( 2\rho_{ii} - \phi_{u}) F^{ii} U_{ii}  \\
	&\qquad + (A + \frac{M}{2}) \sum_i F^{ii} \rho_{ii} - 2 \beta \sum_i F^{ii} U_{ii} \rho_i^2 ,
	\end{aligned}
	\end{equation}
	where $C_2 = C_2 \big(||d||_{C^3}, ||\chi||_{C^1}, |R_{ijkl}|, ||\phi||_{C^2}, |\psi|, |\nabla(\psi)|, || u||_{C^{1, 1}}, ||u_0||_{C^3} \big)$.

	Since $|\nabla \rho| \geq \frac{1}{2}$ in $M_{\delta}$, 
	then there exists $1 \leq k \leq n$ such that $\rho_{k}^2 \geq \frac{1}{4n}$. Without loss of generality, we assume $\rho_1^2 \geq \frac{1}{4n}.$ If we choose $\beta \geq 4n\gamma_1$, then the index set $J := \{ 1 \leq j \leq n : \beta \rho_j^2 \geq \gamma_1 \}$ is non-empty with $1 \in J$. For any $i\in J$, by (\ref{C2_nn_2}), we have
	\begin{equation}
	U_{ii} = \frac{1}{1 + \beta \rho} (A + \frac{M}{2}) + Q,
	\end{equation}
	where $$Q  := - \frac{1}{\rho_{i}} \left(-\sum_k \chi_{ki} \rho_k + \sum_k u_{k} \rho_{ki} - \phi_{i} - \phi_{u} u_{i} - q_{i} \right) 
	-  \frac{\beta}{1 + \beta \rho} (\sum_{k} u_{k}\rho_{k} - \phi - q).$$
	Since $\rho_{i}^2 \geq \frac{\gamma_1}{\beta}$, then $|Q| \leq C_3 \beta$, where $C_3 = C_3\big( |\chi_{ij}|, ||d||_{C^2}, || u||_{C^1}, ||\phi||_{C^1}, ||u_0||_{C^2}, \gamma_1  \big)$. 
	If we choose $A \geq 4 C_3 \beta$, that is, $\frac{A}{4} \geq C_3 \beta \geq |Q|$, then for $i\in J$,
	\begin{equation}\label{C2_nn_8}
	\frac{1}{4} A + \frac{1}{4}M \leq U_{ii} \leq \frac{5}{4} A + \frac{1}{2}M.
	\end{equation}
	Since $\rho_1^2 \geq \frac{1}{4n}$, and $U_{jj} \geq \frac{1}{4} A + \frac{1}{4}M > 0$ for $j \in J$, and $\beta\rho_{i}^2 \leq \gamma_1$ for $i \notin J$, and $2\rho_{kk} - \phi_u \leq -2\gamma_1$, then
	\begin{equation}\label{C2_nn_9}
	\begin{aligned}
	&\qquad - 2 \beta \sum_i F^{ii} U_{ii} \rho_i^2 - (1 + \beta\rho) \sum_i ( 2\rho_{ii} - \phi_{u}) F^{ii} U_{ii} \\
	&\leq - 2 \beta F^{11} U_{11} \rho_1^2 - 2 \sum_{i\notin J, U_{ii}<0} F^{ii} U_{ii} (\beta\rho_i^2) - (1 + \beta\rho) \sum_{U_{ii} < 0} ( 2\rho_{ii} - \phi_{u}) F^{ii} U_{ii}\\
	&\qquad - (1 + \beta\rho) \sum_{U_{ii} > 0} ( 2\rho_{ii} - \phi_{u}) F^{ii} U_{ii}\\
	&\leq - \frac{\beta}{2n} F^{11} U_{11} - 2 \gamma_1 \sum_{ U_{ii}<0} F^{ii} U_{ii} + 2 \gamma_1 \sum_{U_{ii} < 0} F^{ii} U_{ii} + 2 \sum_{U_{ii} > 0} | 2\rho_{ii} - \phi_{u}| F^{ii} U_{ii}\\
	&\leq - \frac{\beta}{2n} F^{11} U_{11} + \gamma_2 \sum_{U_{ii} > 0} F^{ii} U_{ii},
	\end{aligned}
	\end{equation}
	where $\gamma_2 := 4\gamma_0 + 2 \sup_{\overline{M_T}} |\phi_u|.$		
	Since $\nabla^2 \rho \leq - \gamma_1 g$ in $M_{\delta, T}$, then
	\begin{equation}\label{C2_nn_5}
	\begin{aligned}
	\sum_i F^{ii} \rho_{ii} \leq - \gamma_1 \sum_i F^{ii}.
	\end{aligned}
	\end{equation}
	Therefore, by (\ref{C2_nn_4}), (\ref{C2_nn_9}) and (\ref{C2_nn_5}), we have
	\begin{equation}\label{C2_nn_6}
	\begin{aligned}
	0 &\leq C_2 \beta \sum_i F^{ii} - (A + \frac{M}{2}) \gamma_1 \sum_i F^{ii} - \frac{\beta}{2n} F^{11} U_{11} + \gamma_2 \sum_{U_{ii} > 0} F^{ii} U_{ii}\\
	&\leq - \frac{\gamma_1 (A + M)}{2} \sum_i F^{ii} - \frac{\beta}{2n} F^{11} U_{11} + \gamma_2 \sum_{U_{ii} > 0} F^{ii} U_{ii},
	\end{aligned}
	\end{equation}
	where we choose $A \geq 2C_2\beta /\gamma_1$.

	Suppose $U_{11} \geq - \frac{3\gamma_2}{\gamma_1} \lambda_{min}$, then by (\ref{C2_nn_6}), (\ref{sigma_k_prop_2}), (\ref{sigma_k_prop_3}), (\ref{sigma_k_prop_8}) and (\ref{C2_nn_8}), we have the following contradiction
	\begin{equation}\label{C2_nn_7}
	\begin{aligned}
	0 &\leq - \frac{\gamma_1 (A + M)}{2} \sum_i F^{ii} - \frac{\beta}{2n} F^{11} U_{11} + \gamma_2 \sum_{U_{ii} > 0} F^{ii} U_{ii}\\
	&\leq  - \frac{\gamma_1 (A + M)}{2} \sum_i F^{ii} + \gamma_2 (k - \sum_{U_{ii} < 0} F^{ii} U_{ii}) \\
	&\leq - \frac{\gamma_1 (A + M)}{2} \sum_i F^{ii} + \gamma_2 k + \frac{\gamma_1}{3} U_{11} \sum_{i} F^{ii}\\
	&\leq - \frac{\gamma_1 (A + M)}{2} \sum_i F^{ii} + \gamma_2 k + \frac{\gamma_1}{3} (\frac{5}{4} A + \frac{1}{2}M) \sum_{i} F^{ii}\\ 
	&\leq - \frac{1}{12}\gamma_1 A C(n, k) e^{\frac{-1}{k}(u_t + \psi)} + \gamma_2 k - \frac{1}{3}\gamma_1 M\sum_i F^{ii} \\
	&\leq - \frac{1}{3}\gamma_1 M\sum_i F^{ii},
	\end{aligned}
	\end{equation}
	where we choose $A \geq \frac{12\gamma_2 k}{\gamma_1 C(n,k)} \sup e^{\frac{1}{k}(u_t + \psi)}$.

	Suppose $U_{11} \leq - \frac{3\gamma_2}{\gamma_1} \lambda_{min}$. By (\ref{C2_nn_0}) and (\ref{C2_nn_8}),
	\begin{equation}\label{C2_nn_10}
	\lambda_{max} \leq C_0 (1 + M) \leq 4 C_0 U_{11},
	\end{equation}
	if we choose $A \geq 1$. By Lemma \ref{sigma_k_lemma}, we have 
	\begin{equation}\label{C2_nn_12}
	F^{11} \geq \frac{1}{C_3} \sum_i F^{ii},
	\end{equation} 
	where $\frac{1}{C_3} := \left( \frac{\gamma_1}{12 C_0 \gamma_2} \right)^2 \frac{k - 1}{(n-1)(n-2+k) (n-k+1)}$.
	By (\ref{C2_nn_6}), (\ref{C2_nn_8}), (\ref{C2_nn_10}) and (\ref{C2_nn_12}), we have the following contradiction
	\begin{equation}\label{C2_nn_11}
	\begin{aligned}
	0 &\leq - \frac{\gamma_1 (A + M)}{2} \sum_i F^{ii} - \frac{\beta}{2n} F^{11} U_{11} + \gamma_2 \sum_{U_{ii} > 0} F^{ii} U_{ii}\\
	&\leq - \frac{\gamma_1 (A + M)}{2} \sum_i F^{ii} - \frac{\beta}{8n C_3} (A + M) \sum_i F^{ii} + \gamma_2 C_0 (1 + M) \sum_{i} F^{ii}\\ &\leq - \frac{\gamma_1 (A + M)}{2} \sum_i F^{ii},
	\end{aligned}
	\end{equation}
	where we choose $\beta \geq 8n\gamma_2 C_0 C_3$ and $A \geq 1$.

	In conclusion, if we choose $\beta = 8 n \gamma_2 C_0 C_3 \geq 2$, as well as $\delta = \frac{1}{\beta}$, and $A = \max\{2C_1/\delta, 2C_2 \beta / \gamma_1, \frac{12\gamma_2 k}{\gamma_1 C(n,k)} \sup e^{\frac{1}{k}(u_t + \psi)}, 1\}$, then $W$ attains its minimum only on $\partial M_{\delta, T}$, and consequently, $W \geq 0$ in $\overline{M_{\delta, T}}$.
	
	Suppose $\sup_{\partial M \times [0, T]} u_{\nu\nu}$ is attained at $(x_1, t_1) \in \partial M \times [0, T]$, then at $(x_1, t_1)$, we have
	\begin{equation}
	\begin{aligned}
	0 &\leq W_\nu 
	= - \left(u_{\nu\nu} + \sum_k u_{k} \rho_{k\nu} - \phi_\nu - \phi_{u} \phi - q_{\nu} \right) + (A + \frac{M}{2}) \\
	&\leq - u_{\nu\nu} + C + \frac{M}{2},
	\end{aligned}
	\end{equation}
	and hence (\ref{C2_nn_ineq_1}) holds. 
\end{proof}

\begin{proposition}\label{C2_nn_inf_u_nn}
	There exist large $A, \beta$ and small $\delta$ such that
	\begin{equation}\label{C2_nn_ineq_2}
	\underline{W} := - (1 + \beta \rho) \left( \nabla_{\nabla \rho} u - \phi - q \right) - (A + \frac{M}{2}) \rho \leq 0 \qquad \text{in} \; \overline{M_{\delta, T}},
	\end{equation}
	where $q(x) := \nabla_{\nabla \rho} u_0 - \phi(x, u_0)$. Consequently, 
	\begin{equation}
	\inf_{\partial M \times [0, T]} u_{\nu\nu} \geq - C - \frac{1}{2} M,
	\end{equation}
	where the constant $C$ depends on $C_0$, $||d||_{C^3}$, $||\chi||_{C^1}$, $||u||_{C^{1, 1}}$, $||\phi||_{C^2}$, $||u_0||_{C^{3}}$, $|\psi|$, $|\nabla \psi|$, $k$, $n$ and the background geometric data.
\end{proposition}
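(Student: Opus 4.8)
The plan is to follow the proof of Proposition~\ref{C2_nn_sup_u_nn} line by line, with the term $(A+\tfrac12 M)\rho$ carrying the opposite sign everywhere, and to exploit one simplification that this sign change makes available (recall $k\ge 2$, the case $k=1$ being uniformly parabolic).

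First I would verify $\underline W\le 0$ on the parabolic boundary $\partial M_{\delta,T}$. On $\partial M$ we have $\nabla\rho=\nu$ and $\rho=0$, while the boundary condition gives $\nabla_{\nabla\rho}u-\phi=0$ and, since $u_0$ is compatible with it at $t=0$, $q=\nabla_{\nabla\rho}u_0-\phi(\cdot,u_0)=0$; hence $\underline W=0$ there. On $\{d=\delta\}$ we have $\rho\ge\delta/2$ and the first factor of $\underline W$ is bounded by a constant $C_1=C_1(\|u\|_{C^1},\|d\|_{C^1},|\phi|,\|u_0\|_{C^1})$, so $\underline W\le C_1-\tfrac12 A\delta\le 0$ once $A\ge 2C_1/\delta$. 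At $t=0$, $\nabla_{\nabla\rho}u-\phi-q\equiv 0$, so $\underline W=-(A+\tfrac12 M)\rho\le 0$. As the assertion is trivial when $M=0$, we may assume $M>0$, and it remains to rule out an interior positive maximum.

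Suppose then that $\underline W$ attains a positive maximum at $(x_0,t_0)\in M_{\delta,T}$; diagonalizing $(U_{ij})$ there I would use $\underline W_t\ge 0$, $\underline W_i=0$, and $\sum_iF^{ii}\underline W_{ii}\le 0$, so that $\sum_iF^{ii}\underline W_{ii}-\underline W_t\le 0$. Exactly as in the derivation of~(\ref{C2_nn_8}), $\underline W_i=0$ together with $|\nabla\rho|\ge\tfrac12$ (so some $\rho_k^2\ge\tfrac1{4n}$, say $\rho_1^2\ge\tfrac1{4n}$, whence $J:=\{j:\beta\rho_j^2\ge\gamma_1\}\ni 1$ once $\beta\ge 4n\gamma_1$) yields $U_{ii}=-\dfrac{A+\tfrac12 M}{1+\beta\rho}+\underline Q$ with $|\underline Q|\le C_3\beta$ for $i\in J$, hence $-\tfrac54A-\tfrac12M\le U_{ii}\le-\tfrac14A-\tfrac14M<0$ once $A\ge 4C_3\beta$; in particular $U_{11}=\lambda_1<0$ and $|U_{11}|\ge\tfrac14(A+M)$. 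The simplification over Proposition~\ref{C2_nn_sup_u_nn} is that, since $\lambda_1<0$ and admissibility $\lambda\in\Gamma_k(\mathbb R^n)$ gives $\sigma_{k-2}(\lambda|1)>0$, (\ref{sigma_k_prop_1}) gives $\sigma_{k-1}(\lambda|1)=\sigma_{k-1}(\lambda)-\lambda_1\sigma_{k-2}(\lambda|1)>\sigma_{k-1}(\lambda)$, and so by~(\ref{sigma_k_prop_3}) one has $F^{11}\ge\tfrac1{n-k+1}\sum_iF^{ii}$ automatically: Lemma~\ref{sigma_k_lemma} and the case split used in Proposition~\ref{C2_nn_sup_u_nn} are not needed here.

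Finally I would expand $\sum_iF^{ii}\underline W_{ii}-\underline W_t$ just as in the derivation of~(\ref{C2_nn_4}) --- the second-order-in-$u$ terms are $U$-entries and third derivatives handled via~(\ref{C2_ineq_7}), so no concavity term enters --- now with the $(A+\tfrac12 M)$-terms flipped, so that $-(A+\tfrac12 M)\sum_iF^{ii}\rho_{ii}\ge\gamma_1(A+\tfrac12 M)\sum_iF^{ii}$ (using $\nabla^2\rho\le-\gamma_1 g$) is the dominant positive contribution. Since every $i\in J$ now has $U_{ii}<0$, every index with $U_{ii}>0$ lies in $J^c$, where $\beta\rho_i^2<\gamma_1$; splitting on the sign of $U_{ii}$ and using $\phi_u\ge0$ (so $2\rho_{ii}-\phi_u\le-2\gamma_1$), the cross-terms $-2\beta\sum_iF^{ii}U_{ii}\rho_i^2-(1+\beta\rho)\sum_i(2\rho_{ii}-\phi_u)F^{ii}U_{ii}$ are bounded below by $\tfrac{\beta}{2n}F^{11}|U_{11}|-\gamma_2\sum_iF^{ii}|U_{ii}|$. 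Using~(\ref{C2_nn_0}) for $\sum_iF^{ii}|U_{ii}|\le C_0(1+M)\sum_iF^{ii}$, the bounds $F^{11}\ge\tfrac1{n-k+1}\sum_iF^{ii}$ and $|U_{11}|\ge\tfrac14(A+M)$, and absorbing the errors $C_2\beta(1+\sum_iF^{ii})$, one arrives at
\[
0\ \ge\ \Bigl[\gamma_1\bigl(A+\tfrac12 M\bigr)+\tfrac{\beta(A+M)}{8n(n-k+1)}-\gamma_2 C_0(1+M)-C_2\beta\Bigr]\sum_iF^{ii}\ -\ C_2\beta .
\]
Choosing first $\beta\ge 8n(n-k+1)\gamma_2 C_0$, so that $\tfrac{\beta(A+M)}{8n(n-k+1)}\ge\gamma_2 C_0(1+M)$ for $A\ge1$, and then $A$ large enough that the bracket exceeds $\tfrac12\gamma_1 A$ and $\tfrac14\gamma_1 A\,\sum_iF^{ii}\ge C_2\beta$ (via the Maclaurin lower bound $\sum_iF^{ii}\ge c(n,k)e^{-\frac1k(u_t+\psi)}$), the right-hand side is strictly positive, a contradiction. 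Hence $\underline W\le 0$ on $\overline{M_{\delta,T}}$; at a point $(x_1,t_1)\in\partial M\times[0,T]$ realizing $\inf u_{\nu\nu}$, the inequality $\underline W\le0=\underline W|_{\partial M}$ forces $\underline W_\nu\le0$, which, expanding $\underline W_\nu$ on $\partial M$ as in Proposition~\ref{C2_nn_sup_u_nn}, gives $u_{\nu\nu}\ge-C-\tfrac12M$. I expect the main obstacle to be precisely this sign accounting in the last display --- in particular, checking that the favorable $\tfrac{\beta}{2n}F^{11}|U_{11}|$ term survives the splitting on $\mathrm{sign}\,U_{ii}$ and that the residue $-\gamma_2\sum_iF^{ii}|U_{ii}|$ is only $O((1+M)\sum_iF^{ii})$, which is exactly what~(\ref{C2_nn_0}) and the definition of $J$ deliver.
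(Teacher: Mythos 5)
Your proof is correct and follows essentially the same route as the paper's: verify $\underline{W}\le 0$ on the parabolic boundary, locate an interior maximum, diagonalize, use $\underline{W}_i=0$ to force $U_{ii}<0$ on the index set $J$, observe that $U_{11}<0$ directly gives $F^{11}\ge\frac{1}{n-k+1}\sum_iF^{ii}$ (no need for Lemma~\ref{sigma_k_lemma} or the case split of Proposition~\ref{C2_nn_sup_u_nn}), and derive a contradiction with the same choices of $\beta$ and $A$. Your accounting is marginally more careful in one spot — you explicitly keep a free additive error $-C_2\beta$ and absorb it by the Maclaurin lower bound on $\sum_iF^{ii}$, whereas the paper folds all errors into $C_2\beta\sum_iF^{ii}$ — but this is cosmetic and both are sound.
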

\begin{proof}
	Similar to Proposition \ref{C2_nn_sup_u_nn}, if we choose $\delta \leq \frac{1}{\beta}$ and $A \geq 2C_1 / \delta$, where the constant $C_1 =  C_1 \left( || u ||_{C^1}, || d||_{C^1}, |\phi|, || u_0||_{C^1} \right) \geq |\nabla_{\nabla \rho} u - \phi - q|$, then we have $1 \leq 1 + \beta \rho \leq 2$ and $\underline{W} \leq 0$ on the parabolic boundary $\partial M_{\delta, T}$.

	Suppose $\underline{W}$ attains a maximum at $(x_0, t_0) \in M_{\delta, T}$. 
	We choose an orthonormal frame $\{e_i\}$ around $x_0$ such that $\nabla_{e_i} e_j (x_0) = 0$ and the matrix $\left( U_{ij} \right)(x_0, t_0)$ is diagonalized. Then $F^{ij} := F^{ij}(U)(x_0, t_0)$ is also diagonalized. Then at $(x_0, t_0)$, similar to (\ref{C2_nn_4}), we have
	\begin{equation}\label{C2_nn_24}
	\begin{aligned}
	0 \geq \sum_i F^{ii} \underline{W}_{ii} - \underline{W}_t
	&\geq - C_2 \beta \sum_i F^{ii} - (1 + \beta\rho) \sum_i ( 2\rho_{ii} - \phi_{u}) F^{ii} U_{ii}  \\
	&\qquad - (A + \frac{M}{2}) \sum_i F^{ii} \rho_{ii} - 2 \beta \sum_i F^{ii} U_{ii} \rho_i^2 ,
	\end{aligned}
	\end{equation}
	where $C_2 = C_2 \big(||d||_{C^3}, ||\chi||_{C^1}, |R_{ijkl}|, ||\phi||_{C^2}, |\psi|, |\nabla(\psi)|, ||u||_{C^1}, |u_t|, ||u_0||_{C^3} \big)$.

	Since $|\nabla \rho| \geq \frac{1}{2}$ in $M_{\delta}$, 
	without loss of generality, we assume $\rho_1^2 \geq \frac{1}{4n}.$ 
	If we choose $\beta \geq 4n\gamma_1$, then the index set $J := \{ 1 \leq j \leq n : \beta \rho_j^2 \geq \gamma_1 \}$ is non-empty with $1 \in J$. For any $i\in J$, we have
	\begin{equation}
	U_{ii} = - \frac{1}{1 + \beta \rho} (A + \frac{M}{2}) + Q,
	\end{equation}
	where $|Q| \leq C_3 \beta$ and $C_3 = C_3\big( |\chi_{ij}|, ||d||_{C^2}, |\nabla u|, ||\phi||_{C^1}, ||u_0||_{C^2}, \gamma_1  \big)$. 
	If we choose $A \geq 4 C_3 \beta$, that is, $\frac{A}{4} \geq C_3 \beta \geq |Q|$, then for $i\in J$,
	\begin{equation}\label{C2_nn_28}
	-\frac{1}{4} A - \frac{1}{4}M \geq U_{ii} \geq - \frac{5}{4} A - \frac{1}{2}M.
	\end{equation}
	Since $\rho_1^2 \geq \frac{1}{4n}$, and $U_{jj} \leq -\frac{1}{4} A - \frac{1}{4}M < 0$ for $j \in J$, and $\beta\rho_{i}^2 \leq \gamma_1$ for $i \notin J$, and $2\rho_{kk} - \phi_u \leq -2\gamma_1$, then
	\begin{equation}\label{C2_nn_29}
	\begin{aligned}
	&\qquad - 2 \beta \sum_i F^{ii} U_{ii} \rho_i^2 - (1 + \beta\rho) \sum_i ( 2\rho_{ii} - \phi_{u}) F^{ii} U_{ii} \\
	&\geq - 2 \beta F^{11} U_{11} \rho_1^2 - 2 \sum_{i\notin J, U_{ii}>0} F^{ii} U_{ii} (\beta \rho_i^2) - (1 + \beta\rho) \sum_{U_{ii} > 0} ( 2\rho_{ii} - \phi_{u}) F^{ii} U_{ii}\\
	&\qquad - (1 + \beta\rho) \sum_{U_{ii} < 0} ( 2\rho_{ii} - \phi_{u}) F^{ii} U_{ii}\\
	&\geq - \frac{\beta}{2n} F^{11} U_{11} - 2 \gamma_1 \sum_{ U_{ii}>0} F^{ii} U_{ii} + 2 \gamma_1 \sum_{U_{ii} > 0} F^{ii} U_{ii} + 2 \sum_{U_{ii} < 0} | 2\rho_{ii} - \phi_{u}| F^{ii} U_{ii}\\
	&\geq - \frac{\beta}{2n} F^{11} U_{11} + \gamma_2 \sum_{U_{ii} < 0} F^{ii} U_{ii},
	\end{aligned}
	\end{equation}
	where $\gamma_2 := 4\gamma_0 + 2 \sup_{\overline{M_T}} |\phi_u|.$		
	Therefore, by (\ref{C2_nn_24}), (\ref{C2_nn_29}) and (\ref{C2_nn_5}), we have
	\begin{equation}\label{C2_nn_26}
	\begin{aligned}
	0 &\geq - C_2 \beta \sum_i F^{ii} + (A + \frac{M}{2}) \gamma_1 \sum_i F^{ii} - \frac{\beta}{2n} F^{11} U_{11} + \gamma_2 \sum_{U_{ii} < 0} F^{ii} U_{ii}\\
	&\geq \frac{\gamma_1 (A + M)}{2} \sum_i F^{ii} - \frac{\beta}{2n} F^{11} U_{11} + \gamma_2 \sum_{U_{ii} < 0} F^{ii} U_{ii},
	\end{aligned}
	\end{equation}
	where we choose $A \geq 2C_2\beta /\gamma_1$.

	Since $U_{11} < 0$, as in (\ref{sigma_k_lemma_2}), we have
	\begin{equation}
	F^{11} \geq \frac{1}{n-k+1} \sum_i F^{ii}.
	\end{equation} 
	By (\ref{C2_nn_26}), (\ref{C2_nn_28}) and (\ref{C2_nn_0}), we have the following contradiction
	\begin{equation}\label{C2_nn_31}
	\begin{aligned}
	0 &\geq \frac{\gamma_1 (A + M)}{2} \sum_i F^{ii} - \frac{\beta}{2n} F^{11} U_{11} + \gamma_2 \sum_{U_{ii} < 0} F^{ii} U_{ii}\\
	&\geq \frac{\gamma_1 (A + M)}{2} \sum_i F^{ii} + \frac{\beta(A + M)}{8n(n-k+1)} \sum_i F^{ii} - \gamma_2 C_0 (1 + M) \sum_{i} F^{ii}\\
	& \geq \frac{\gamma_1 (A + M)}{2} \sum_i F^{ii},
	\end{aligned}
	\end{equation}
	where we choose $\beta \geq 8n(n-k+1)\gamma_2 C_0$ and $A \geq 1$.

	In conclusion, if we choose $\beta = 8 n (n-k+1) \gamma_2 C_0 \geq 2$, and $\delta = \frac{1}{\beta}$, and $A = \max\{2C_1/\delta, 2C_2 \beta / \gamma_1 \} \geq 1$, then $\underline{W}$ attains its maximum only at $\partial M_{\delta, T}$, and consequently, $\underline{W} \leq 0$ in $\overline{M_{\delta, T}}$.
	
	Suppose $\inf_{\partial M \times [0, T]} u_{\nu\nu}$ is attained at $(x_1, t_1) \in \partial M \times [0, T]$, then at $(x_1, t_1)$, we have
	\begin{equation}
	\begin{aligned}
	0 &\geq \underline{W}_\nu 
	= - \left(u_{\nu\nu} + \sum_k u_{k} \rho_{k\nu} - \phi_\nu - \phi_{u} \phi - q_{\nu} \right) - (A + \frac{M}{2}) \\
	&\geq - u_{\nu\nu} - C - \frac{M}{2},
	\end{aligned}
	\end{equation}
	and hence (\ref{C2_nn_ineq_2}) holds. 
\end{proof}

\section{Long-time existence and uniform convergence theorem}\label{conclusion.sec}
\begin{theorem}\label{sigma_k_long_time}
	Let $(\overline{M}, g)$ be a compact Riemanian manifold with non-negative sectional curvatures and uniformly strictly convex boundary $\partial M$\footnote{The principal curvatures of $\partial M$ is bounded below by a positive constant $\underline{\kappa}$.} and $\chi$ a smooth (0, 2)-tensor on $\overline{M}$. 
	Suppose $\psi(x, z, t) \in C^{\infty}\big( \overline{M}\times\mathbb{R}^2 \big)$ with either $\psi_z \equiv 0$ or $\psi_t \equiv 0$, and $\phi(x, z) \in C^{\infty} \big(\partial M \times \mathbb{R}\big)$ with $\phi_z \geq 0$, and $u_0 \in C^{\infty}(\overline{M})$ with $(u_0)_\nu = \phi(x, u_0)$ on $\partial M$ and $\lambda_g(\chi + \nabla^2 u_0) \in \Gamma_k(\mathbb{R}^n)$.
	Then the IBV problem
	\begin{equation}\label{eq_IBV_k-Hessian}
	\left\{
	\begin{aligned}
	u_t &= \log \sigma_k\big(\lambda_g(\chi_{ij} + u_{ij})\big) - \psi(x, u, t) \qquad &\text{in}&\; M \times \{t>0\}, \\ 
	u_\nu &= \phi(x, u) \qquad &\text{on}& \; \partial M \times \{t \geq 0\},\\
	u  &= u_0 \qquad & \text{in}& \; \overline{M}\times\{t = 0\},
	\end{aligned}
	\right.
	\end{equation}
	has a unique smooth solution $u \in C^{\infty} \big(\overline{M} \times (0, \infty) \big)$. 
\end{theorem}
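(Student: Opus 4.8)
The plan is to prove Theorem~\ref{sigma_k_long_time} by the continuation method: establish short-time existence of a smooth admissible solution, derive a priori $C^{2+\alpha,1+\frac{\alpha}{2}}$ estimates on every finite time interval, and conclude that the maximal existence time is infinite. Since $\lambda_g(\chi+\nabla^2 u_0)\in\Gamma_k$, the linearization at $u_0$ of $u\mapsto \log\sigma_k(\lambda_g(\chi_{ij}+u_{ij}))-\psi(x,u,t)-u_t$ is uniformly parabolic near $t=0$, the boundary operator $u\mapsto u_\nu-\phi(x,u)$ is oblique with linearization $\partial_\nu-\phi_z$, and the compatibility condition $(u_0)_\nu=\phi(x,u_0)$ holds on $\partial M$; hence standard linear parabolic Schauder theory for oblique derivative problems together with the implicit function theorem (or a contraction mapping in parabolic H\"older spaces) produces $T^\ast>0$ and a unique solution $u\in C^{2+\alpha,1+\frac{\alpha}{2}}(\overline M\times[0,T^\ast))$; differentiating the equation and bootstrapping the Schauder estimates upgrades $u$ to $C^\infty$, and by continuity $\lambda_g(\chi+\nabla^2 u)$ stays in $\Gamma_k$ after possibly shrinking $T^\ast$, so $u$ is admissible. (For $k=1$ the equation is quasilinear uniformly parabolic and this step is routine.)

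Next I would collect the a priori estimates on $\overline{M_T}$ for an arbitrary fixed $T<T^\ast$. First, $\underline u(x,t):=u_0(x)$ is a parabolic $\mathcal C$-subsolution of $\log\sigma_k(\lambda_g(\chi+u_{ij}))=u_t+\psi$: admissibility $\lambda_g(\chi+\nabla^2 u_0)\in\Gamma_k$ forces the set in Definition~\ref{Def_C-sub_parabolic} (which for $F=\log\sigma_k$ amounts to $\{A\in\Gamma_k : A\ge\chi+\nabla^2 u_0,\ \sigma_k(\lambda(A))\le\Lambda\}$) to be bounded for every $\Lambda$, as long as $k<n$, which is the range relevant here. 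Then Lemma~\ref{u_t_bounds} bounds $\sup_{\overline{M_T}}|u_t|$ (uniformly in $t$ when $\psi_t\equiv0$, and by $\sup_{t=0}|u_t|+T\sup|\psi_t|$ when $\psi_z\equiv0$), Corollary~\ref{para_C_0} bounds $\sup_{\overline{M_T}}|u|$, Theorem~\ref{grad_estimate} bounds $\sup_{\overline{M_T}}|\nabla u|$ (note $2\underline\kappa+\inf_{\partial M}\phi_u\ge2\underline\kappa>0$ since $\phi_z\ge0$), Theorem~\ref{C2_interior_estimate} reduces the Hessian bound to $\sup_{\partial M\times(0,T]}|u_{\nu\nu}|$, and Theorem~\ref{sigma_k_u_nn_estimates} (using $\underline\kappa>0$) controls the latter. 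Altogether $\|u\|_{C^{2,1}(\overline{M_T})}\le C(T)$.

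With the $C^{2,1}$ bound in hand, the equation becomes uniformly parabolic on $\overline{M_T}$: from $\sigma_k(\lambda_g(\chi+u_{ij}))=e^{u_t+\psi}$ one sees $\sigma_k$ is bounded above and below by positive constants, so together with the Hessian bound $\lambda_g(\chi+u_{ij})$ lies in a fixed compact set $K\Subset\Gamma_k$ and $(F^{ij})$ has eigenvalues in $[\lambda_0,\Lambda_0]$ with $\lambda_0>0$. Since $F=\log\sigma_k$ is concave on $\Gamma_k$, Evans--Krylov and Krylov--Safonov theory for concave fully nonlinear uniformly parabolic equations with an oblique boundary condition gives a $C^{2+\alpha,1+\frac{\alpha}{2}}(\overline{M_T})$ bound depending only on $T$ (the data being compatible at $t=0$ yields the estimate up to the corner $\partial M\times\{0\}$), after which differentiating the equation and iterating linear parabolic Schauder/oblique-derivative estimates yields $\|u\|_{C^{m,\alpha}(\overline M\times[\tau,T])}\le C(m,\tau,T)$ for every $m$ and $\tau>0$. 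If $T^\ast<\infty$, these bounds on $[0,T^\ast)$ and Arzel\`a--Ascoli give an admissible limit $u(\cdot,T^\ast)\in C^\infty(\overline M)$ with $\lambda_g(\chi+\nabla^2 u(\cdot,T^\ast))\in K\Subset\Gamma_k$ satisfying the boundary condition, so restarting short-time existence at $t=T^\ast$ extends $u$ beyond $T^\ast$ --- a contradiction; hence $T^\ast=\infty$ and $u\in C^\infty(\overline M\times(0,\infty))$. For uniqueness, if $u,v$ are two solutions then $w:=u-v$ solves a linear uniformly parabolic equation $w_t=a^{ij}w_{ij}+b^iw_i+cw$ ($c\equiv0$ when $\psi_z\equiv0$, $c$ bounded when $\psi_t\equiv0$) with $w_\nu=\phi_z(x,\xi)w$, $\phi_z\ge0$, and $w(\cdot,0)=0$; the weak maximum principle plus the Hopf lemma at a boundary extremum (as in the proof of Lemma~\ref{u_t_bounds}) forces $w\equiv0$.

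The genuinely hard analytic input --- the global second-order estimate, in particular the normal--normal boundary estimate of Theorem~\ref{sigma_k_u_nn_estimates}, which is what forces the hypotheses $\underline\kappa>0$, non-negative sectional curvature, and the $\sigma_k$-specific Lemma~\ref{sigma_k_lemma} --- has already been carried out in Sections~\ref{estimates.sec_1}--\ref{estimates.sec_3}. Within the present theorem the step demanding the most care is the regularity bootstrap up to the boundary: passing from the a priori $C^{2,1}$ bound to $C^{2+\alpha,1+\frac{\alpha}{2}}$ via Evans--Krylov-type estimates for uniformly parabolic concave equations under the oblique (Neumann) boundary condition, and then to $C^\infty$ with control up to $t=0$ from the compatibility of the initial data, since this is where the interaction between the fully nonlinear structure and the boundary operator is most delicate.
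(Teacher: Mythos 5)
Your proposal follows exactly the same route as the paper: take $\underline u(x,t)=u_0(x)$ as a parabolic $\mathcal C$-subsolution, obtain the $C^{2,1}$ estimates from Lemma~\ref{u_t_bounds}, Corollary~\ref{para_C_0}, Theorem~\ref{grad_estimate}, Theorem~\ref{C2_interior_estimate} and Theorem~\ref{sigma_k_u_nn_estimates}, bootstrap via Evans--Krylov and Schauder, and conclude long-time existence; the paper's proof is essentially a terse statement of this chain, so you have reconstructed it with more detail on the routine steps (short-time existence, uniform parabolicity from the bound on $u_t+\psi$, uniqueness via the linearized equation). One small slip: the parenthetical claim that the $\mathcal C$-subsolution boundedness holds \emph{``as long as $k<n$''} is not needed and is misleading --- the set is also bounded when $k=n$ (positive-definite lower bound on $A$ plus an upper bound on $\det A$ forces $A$ bounded), and the theorem indeed covers $k=n$; it is Lemma~\ref{sigma_k_lemma} that is stated for $n>k\geq 2$, but in Proposition~\ref{C2_nn_sup_u_nn} the case invoking that lemma is vacuous when $k=n$ since then $\lambda_{\min}>0$.
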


\begin{remark}
	If we want to obtain smoothness of $u$ at $t = 0$, we can consider the following compatibility condition on $\partial M$
	\begin{equation}
	\nabla_\nu \left[ F\big(\chi_{ij} + (u_0)_{ij}\big) - \psi(x, u_0, 0) \right] = \phi_u(x, u_0) \left[ F\big(\chi_{ij} + (u_0)_{ij}\big) - \psi(x, u_0, 0) \right].
	\end{equation}
\end{remark}

\begin{proof}
	
	Since $u_0$ satisfies the zeroth-order compatibility condition $(u_0)_\nu = \phi(x, u_0)$ on $\partial M$, then short-time existence and uniqueness of solutions are well known from the standard theories.
	
	The long-time existence depends on a priori estimates. 
	The admissible function $\underline{u}(x, t) := u_0(x)$ is a (parabolic) $\mathcal{C}$-subsolution for the parabolic equation (\ref{eq_IBV_k-Hessian}) (see Corollary 2.2.5 in \cite{GuoThesis19}). 
	By Lemma \ref{u_t_bounds}, Theorem \ref{grad_estimate}, Theorem \ref{C2_interior_estimate} and Theorem \ref{sigma_k_u_nn_estimates}, we obtain $C^{2, 1}$ estimates on $\overline{M_T}$ for any $T > 0$. By Evans-Krylov theorem \cite{Evans82} \cite{Kry82} and Schauder estimates (see \cite{LSU68}), we obtain higher order estimates on $\overline{M} \times [\epsilon, T]$ for any $0 < \epsilon < T$. Hence, $u \in C^{\infty}\big( \overline{M} \times (0, \infty) \big)$.
\end{proof}

Theorem \ref{elliptic_Neumann_sigma_k} is a corollary of the following theorem when $\rho \equiv 1$.

\begin{theorem}\label{elliptic_Neumann_sigma_k_general}
	Let $(\overline{M}, g)$ be a compact Riemanian manifold with non-negative sectional curvatures and uniformly strictly convex boundary $\partial M$ and $\chi$ a smooth (0, 2)-tensor on $\overline{M}$.
	Suppose $\phi(x) \in C^{\infty} \big(\partial M\big)$, and there exists $\underline{u} \in C^{\infty}(\overline{M})$ such that $\lambda_g(\chi + \nabla^2 \underline{u}) \in \Gamma_k(\mathbb{R}^n)$, and $\underline{u}_\nu = \phi(x)$ on $\partial M$.
	Then for any $\psi, \rho \in C^{\infty}\big( \overline{M} \big)$ with $|\rho| > 0$, there exists a constant $c$ such that the Neumann boundary problem
	\begin{equation}\label{univ_conv_sigma_k_eq}
	\left\{
	\begin{aligned}
	&\sigma_k\big(\lambda_g(\chi_{ij} + u_{ij})\big) = e^{\psi + c\rho} \qquad &\text{in}&\; M, \\ 
	&u_\nu = \phi(x) \qquad &\text{on}& \; \partial M,   
	\end{aligned}
	\right.
	\end{equation}
	has a unique smooth solution $u \in C^{\infty} \big(\overline{M} \big)$ up to a constant.
\end{theorem}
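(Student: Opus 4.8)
The plan is to obtain the solution as the time-infinity limit of a ``modified'' parabolic flow of the type used in Theorem~\ref{sigma_k_long_time}, with the weight $\rho$ built into the time derivative so that the limiting equation acquires the factor $\rho$ in front of the undetermined constant. Since $|\rho|>0$ on the (connected) manifold $\overline{M}$, the function $\rho$ has a fixed sign, so after replacing $(c,\rho)$ by $(-c,-\rho)$ if necessary we may assume $\rho>0$, and then $\rho^{-1}$ is a fixed smooth positive function. I would run the IBV problem
\[
\left\{
\begin{aligned}
&u_t = \rho^{-1}\big[\log\sigma_k\big(\lambda_g(\chi_{ij}+u_{ij})\big)-\psi\big] \qquad &\text{in}\;& M\times(0,\infty), \\
&u_\nu = \phi(x) \qquad &\text{on}\;& \partial M\times[0,\infty), \\
&u = \underline{u} \qquad &\text{on}\;& \overline{M}\times\{0\}.
\end{aligned}
\right.
\]
Because $\lambda_g(\chi+\nabla^2\underline{u})\in\Gamma_k$ and $(\underline{u})_\nu=\phi$, this flow is uniformly parabolic near $t=0$ and satisfies the zeroth-order compatibility condition, so the standard parabolic theory yields a unique short-time smooth admissible solution; and, exactly as in the proof of Theorem~\ref{sigma_k_long_time}, the admissible function $\underline{u}(x,t):=\underline{u}(x)$ is a (parabolic) $\mathcal{C}$-subsolution of it.

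For long-time existence I would establish a priori $C^{2+\alpha,1+\alpha/2}$ bounds, uniform in time, after normalizing away the drift. Differentiating the flow in $t$ and using $\psi_t\equiv0$ gives $\rho\,u_{tt}=F^{ij}(u_t)_{ij}$ with $F^{ij}:=F^{ij}(\chi_{ij}+u_{ij})$, and differentiating the boundary condition gives $(u_t)_\nu=0$ on $\partial M$; thus $u_t$ solves a linear parabolic equation with homogeneous Neumann data, so by the maximum principle $\sup_{\overline{M}}u_t(\cdot,t)$ is non-increasing, $\inf_{\overline{M}}u_t(\cdot,t)$ is non-decreasing, and $\sup_{\overline{M}}|u_t(\cdot,t)|\le C$ with $C$ depending only on $\underline{u},\chi,\psi,\rho$ and the geometry (as in Lemma~\ref{u_t_bounds} adapted to the weight $\rho$). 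Applying Theorem~\ref{osc_thm} to $v:=u-\underline{u}$ (which satisfies $v_\nu=0$, $\log\sigma_k(\lambda_g((\chi+\underline{u}_{ij})+v_{ij}))=\rho\,u_t+\psi\le\Psi$ for a constant $\Psi$, and $g^{ij}v_{ij}=\Delta_g u-\Delta_g\underline{u}\ge-C_0$ since $\Gamma_k\subset\Gamma_1$, with $\underline{v}\equiv0$ a $\mathcal{C}$-subsolution) gives $\text{osc}_{\overline{M}}u(\cdot,t)\le C$ uniformly. The flow does not control $\sup_{\overline{M}}|u|$, so I would instead work with $w(x,t):=u(x,t)-u(p,t)$ for a fixed $p\in\overline{M}$: then $\sup_{\overline{M}}|w|\le\text{osc}_{\overline{M}}u$ is bounded, $\nabla^2 w=\nabla^2 u$, $w_\nu=\phi$, $|w_t|\le 2C$, and $w$ satisfies the same $\rho$-weighted $k$-Hessian flow with $\psi$ replaced by $\psi+\rho\,u_t(p,t)$, a right-hand side bounded in $C^2(\overline{M})$ whose $t$-dependence is only through the spatially constant term $\rho\,u_t(p,t)$. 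Theorems~\ref{grad_estimate}, \ref{C2_interior_estimate} (applicable since $F=\log\sigma_k$ and $\phi_u\equiv0$, so $2\underline{\kappa}+\inf_{\partial M}\phi_u=2\underline{\kappa}>0$) and~\ref{sigma_k_u_nn_estimates} (applicable since $\underline{\kappa}>0$), applied to $w$, then yield $||w(\cdot,t)||_{C^{2,1}}\le C$ uniformly; here it matters that these estimates extend to the fixed smooth weight $\rho^{-1}$, which contributes only lower-order terms, and that they are independent of $|\psi_t|$, so the time-dependence of the right-hand side is harmless. Finally $\sigma_k(\lambda_g(\chi+\nabla^2 u))=e^{\rho u_t+\psi}$ is bounded below by a positive constant, so $\lambda_g(\chi+\nabla^2 u)$ remains in a fixed compact subset of $\Gamma_k$; hence the equation is uniformly elliptic and concave, and Evans--Krylov together with Schauder estimates upgrade the above to uniform $C^\infty$ bounds for $w$ on $\overline{M}\times[1,\infty)$. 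A continuity argument then gives the long-time solution $u\in C^\infty(\overline{M}\times(0,\infty))$.

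To pass to the limit, observe that by the previous step $\rho^{-1}F^{ij}$ is uniformly elliptic with uniformly bounded smooth coefficients on $\overline{M}\times[1,\infty)$, so the Harnack inequality of Appendix~\ref{Harnack.app}, applied to $u_t$ on successive cylinders $\overline{M}\times[j,j+1]$, gives $\text{osc}_{\overline{M}}u_t(\cdot,t)\le(1-\delta)^{\lfloor t\rfloor-1}\,\text{osc}_{\overline{M}}u_t(\cdot,1)\to0$ for some fixed $\delta\in(0,1)$. Together with the monotonicity of $\sup_{\overline{M}}u_t$ and $\inf_{\overline{M}}u_t$ this forces $u_t(\cdot,t)\to-c$ uniformly on $\overline{M}$ as $t\to\infty$, for some constant $c$. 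By the uniform $C^\infty$ bounds on $w$ and a compactness argument there is a sequence $t_j\to\infty$ with $w(\cdot,t_j)\to u_\infty$ in $C^\infty(\overline{M})$; since $\nabla^2 w=\nabla^2 u$, the flow reads $\log\sigma_k(\lambda_g(\chi+\nabla^2 w(\cdot,t_j)))=\rho\,u_t(\cdot,t_j)+\psi$, and letting $j\to\infty$ gives $\log\sigma_k(\lambda_g(\chi+\nabla^2 u_\infty))=\psi-c\rho$, that is, $\sigma_k(\lambda_g(\chi+\nabla^2 u_\infty))=e^{\psi+(-c)\rho}$. The limit lies in $\Gamma_k$ since $\sigma_k>0$ together with Maclaurin's inequality~(\ref{sigma_k_prop_8}) forces $\sigma_l>0$ for all $l<k$, and $(u_\infty)_\nu=\phi$ is preserved under $C^1$ convergence; so $u_\infty$ solves~(\ref{univ_conv_sigma_k_eq}) with the constant $-c$ (and, undoing the initial sign normalization, with the originally given $\rho$).

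For uniqueness up to a constant, if $u_1,u_2\in C^\infty(\overline{M})$ both solve~(\ref{univ_conv_sigma_k_eq}), with constants $c_1,c_2$, then by concavity and ellipticity of $\log\sigma_k$ (integrating the linearization along the segment from $\chi+\nabla^2 u_2$ to $\chi+\nabla^2 u_1$) there is a smooth positive-definite $a^{ij}$ with $a^{ij}(u_1-u_2)_{ij}=(c_1-c_2)\rho$ in $M$ and $(u_1-u_2)_\nu=0$ on $\partial M$; if $c_1>c_2$ the right-hand side is strictly positive, so $u_1-u_2$ cannot attain its maximum over $\overline{M}$ in $M$, and at a boundary maximum the Hopf lemma would force the inner normal derivative to be strictly negative, contradicting $(u_1-u_2)_\nu=0$. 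Hence $c_1=c_2$ by symmetry, and then the strong maximum principle and the Hopf lemma (at the maximum and at the minimum) force $u_1-u_2$ to be constant. I expect the main obstacle to be the convergence step: one must check that the linearized operator stays \emph{uniformly elliptic for all time}, which uses the a priori $C^2$ bound together with the positive lower bound on $\sigma_k$ so that the Harnack inequality can be applied with time-independent constants, and one must exploit the $|\psi_t|$-independence of the a priori estimates in order to accommodate the time-dependent right-hand side created by the drift-removing normalization $w=u-u(p,\cdot)$.
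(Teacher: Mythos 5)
Your proposal takes essentially the same route as the paper: normalize $\rho>0$, run the $\rho$-weighted flow $u_t=\rho^{-1}[\log\sigma_k(\lambda_g(\chi_{ij}+u_{ij}))-\psi]$ starting from $\underline{u}$, bound $u_t$ by the maximum principle, pass to $w(x,t)=u(x,t)-u(p,t)$ to control the drift, obtain uniform $C^{2,1}$ bounds via Theorems~\ref{grad_estimate}, \ref{C2_interior_estimate}, \ref{sigma_k_u_nn_estimates} together with the oscillation bound of Theorem~\ref{osc_thm}, then use Cao's method with the Harnack inequality of Appendix~\ref{Harnack.app} to force $u_t$ to a constant, extract a convergent subsequence, and establish uniqueness via the strong maximum principle and Hopf lemma. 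Your write-up is a more careful expansion of the paper's terse argument and, helpfully, makes explicit the point the paper elides — that the a priori estimates of Sections~\ref{estimates.sec_1}--\ref{estimates.sec_3} must be rechecked for the operator $\rho^{-1}\log\sigma_k$ rather than $\log\sigma_k$ itself, which works because $\rho$ is a fixed smooth positive function and because $\tilde\psi$ is $u$-independent so the $\psi_u\equiv 0$ alternative in Theorem~\ref{C2_interior_estimate} applies.
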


\begin{remark}
	When $k=1$, it is a uniformly elliptic equation, and the curvatures conditions of $\overline{M}$ can be removed (see Theorem 5.1.5 in \cite{GuoThesis19}).
\end{remark}

\begin{proof}
	Without loss of generality, assume $\rho > 0$.
	Let $F(\chi_{ij} + u_{ij}) := \log \sigma_k\big(\lambda_g(\chi_{ij} + u_{ij})\big) / \rho$ and $\tilde{\psi} := \psi / \rho$.
	By Theorem \ref{sigma_k_long_time}, the IBV problem
	\begin{equation}
	\left\{
	\begin{aligned}
	u_t &= F(\chi_{ij} + u_{ij}) - \tilde{\psi}(x) \qquad &\text{in}&\; M \times \{t>0\}, \\ 
	u_\nu &= \phi(x) \qquad &\text{on}& \; \partial M \times \{t \geq 0\},\\
	u  &= \underline{u} \qquad & \text{in}& \; \overline{M}\times\{t = 0\},
	\end{aligned}
	\right.
	\end{equation}
	has a smooth solution $u \in C^{\infty}\big(\overline{M}\times (0, \infty)\big)$. By Lemma \ref{u_t_bounds}, $u_t$ is bounded, and hence $F(\chi_{ij} + u_{ij}) = u_t + \tilde{\psi}(x) \leq C_1$, where $C_1$ is independent of $t$. The admissible function $\underline{u}$ is a $\mathcal{C}$-subsolution of $F(\chi_{ij} + u_{ij}) = C_1$ (see Proposition 2.2.4 in \cite{GuoThesis19}). Therefore, by Theorem \ref{osc_thm}, 
	\begin{equation}\label{unif_conv_osc}
	\sup_{t\geq 0} \Big(\sup_{\overline{M}} u(\cdot, t) - \inf_{\overline{M}} u(\cdot, t)\Big) \leq C.
	\end{equation}
	
	Fix $x_0 \in M$, and let $$v(x,t) := u(x, t) - u(x_0, t), $$
	then $v(x, t)$ satisfies the following IBV problem
	\begin{equation}\label{unif_conc_v_eq}
	\left\{
	\begin{aligned}
	v_t &= F(\chi_{ij} + v_{ij}) - \tilde{\psi}(x) - u_t(x_0, t) \qquad &\text{in}&\; M \times \{t>0\}, \\ 
	v_\nu &= \phi(x) \qquad &\text{on}& \; \partial M \times \{t \geq 0\},\\
	v &= \underline{u} - u(x_0, 0) \qquad & \text{in}& \; \overline{M}\times\{t = 0\}.
	\end{aligned}
	\right.
	\end{equation}
	By (\ref{unif_conv_osc}), we have $\sup_{t\geq 0} ||v(x, t)||_{C^0(\overline{M})} \leq C$.
	By Lemma \ref{u_t_bounds}, 
	$\sup_{\overline{M}\times [0, \infty)} |v_t| \leq 2 \sup_{\overline{M}\times [0, \infty)} |u_t| \leq 2\sup_{\overline{M}\times \{t=0\}} |u_t| \leq C.$
	By Theorem \ref{grad_estimate}, $\sup_{t\geq 0} ||v||_{C^{1} (\overline{M})} \leq C$. 
	The admissible function $\underline{v}(x, t) := \underline{u}(x)$ is a (parabolic) $\mathcal{C}$-subsolution of the parabolic equation (\ref{unif_conc_v_eq}) (see Corollary 2.2.5 in \cite{GuoThesis19}) with $|\underline{v}|, |\underline{v}_t|$ and $\lambda_g(\chi + \nabla^2 \underline{v})$ being bounded. 
	By Theorem \ref{C2_interior_estimate} and Theorem \ref{sigma_k_u_nn_estimates}, we have
	$\sup_{t\geq 0} ||v||_{C^{2} (\overline{M})} \leq C$\footnote{$C^2$ estimates does not depend on $\partial_t \big(\psi(x) + u_t(x_0, t)\big)$, that is, $u_{tt}(x_0, t)$.}, 
	and consequently, $\theta g^{ij} \leq F^{ij} \leq \frac{1}{\theta} g^{ij}$ for some constant $\theta \in (0, 1]$ independent of $t$, where $F^{ij} := F^{ij} (\chi_{ij} + u_{ij})$. Fix $\epsilon > 0$ small, by Evans-Krylov theorem and Schauder estimates, we have $||v||_{C^{4+\alpha, 2+\frac{1}{2}\alpha} \big(\overline{M} \times [\epsilon, \infty)\big)} \leq C,$ 
	and consequently, $ ||F^{ij}||_{C^{2, 1}\big(\overline{M} \times [\epsilon, \infty)\big)} \leq C$.
	
	Since $F^{ij}(\chi_{ij} + u_{ij}) = F^{ij}(\chi_{ij} + v_{ij})$, then $u_t$ satisfies the following linear parabolic equation with Neumann boundary condition
	\begin{equation}\label{du/dt_parabolic}
	\left\{
	\begin{aligned}
	w_t &= a^{ij}w_{ij} \qquad &\text{in}&\; M\times [\epsilon, \infty), \\ 
	w_\nu &= 0 \qquad &\text{on}& \; \partial M \times [\epsilon, \infty),  
	\end{aligned}
	\right.
	\end{equation}
	where $a^{ij} := F^{ij}(\chi_{ij} + v_{ij})$.	
	We can use Cao's method \cite{Cao85} and Theorem \ref{Harnack_ineq_thm} to conclude that $u_t \rightarrow c$ uniformly for some constant $c$, and $v_t \rightarrow 0$ uniformly. 
	
	Since $\sup_{t\geq\epsilon}||v(x, t)||_{C^{2+\alpha} (\overline{M})} \leq C$, then there exists a sequence $t_n \rightarrow \infty$ such that the sequence $\{v(x, t_n)\}$ converges to some function $\tilde{u}(x)$ in $C^{2+\alpha}(\overline{M})$, and $\tilde{u}$ satisfies the equation (\ref{univ_conv_sigma_k_eq}).
	By Schauder estimates (see \cite{Lieberman13}), $\tilde{u} \in C^{\infty}(\overline{M})$.
	
	Uniqueness can be derived by maximum principles and the Hopf lemma. In fact, suppose there are two different solutions $u_1$ and $u_2$ with corresponding constants $c_1 \geq c_2$ for the Neumann problem (\ref{univ_conv_sigma_k_eq}). Let $w := u_1 - u_2$, and $\tilde{a}^{ij} := \int^1_0 F^{ij}\Big( \chi_{ij} + t (u_1)_{ij} + (1-t) (u_2)_{ij}\Big) dt$, then $w$ satisfies the following Neumann problem
	\begin{equation}
	\left\{
	\begin{aligned}
	&\tilde{a}^{ij}w_{ij} = (e^{c_1} - e^{c_2}) e^{\tilde{\psi}(x)} \qquad &\text{in}&\; M, \\ 
	&w_\nu = 0 \qquad &\text{on}& \; \partial M.  
	\end{aligned}
	\right.
	\end{equation}
	By weak maximum principle, $w$ attains a maximum at $x_0 \in \partial M$. Suppose for all $x\in M$, $w(x) < w(x_0)$, then by the Hopf lemma, $w_\nu (x_0) > 0$, a contradiction. Hence, $w$ is a constant and $c_1 = c_2$ by strong maximum principle.
\end{proof}

Similarly, we have the following theorem for linear elliptic equations.
\begin{theorem}
	Let $(\overline{M}, g)$ be a compact Riemanian manifold with smooth boundary $\partial M$.
	Let $\mathcal{L} := a^{ij}\nabla_i \nabla_j$ be a linear uniformly elliptic operator on $\overline{M}$. 
	Then for any $\phi, \psi, \rho \in C^{\infty}\big( \overline{M} \big)$ with $|\rho| > 0$, there exists a constant $c$ such that the following Neumann boundary problem
	\begin{equation}\label{eq_linear_elliptic_2}
	\left\{
	\begin{aligned}
	&\mathcal{L} u = \psi(x) + c \rho \qquad &\text{in}&\; M, \\ 
	&u_\nu = \phi(x) \qquad &\text{on}& \; \partial M,   
	\end{aligned}
	\right.
	\end{equation}
	has a unique smooth solution $u \in C^{\infty} \big(\overline{M} \big)$ up to a constant. Especially, when $\mathcal{L} = \Delta_g$, we have $c = - \frac{1}{\int_{\overline{M}} \rho \,dV} \Big( \int_{\overline{M}} \psi \,dV + \int_{\partial M} \phi \,dS \Big).$
\end{theorem}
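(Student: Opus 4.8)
The plan is to run the same parabolic scheme used for Theorem~\ref{elliptic_Neumann_sigma_k_general}, which simplifies considerably in the linear case: no concavity or Evans--Krylov input is needed, classical Schauder theory suffices for higher regularity, and (as in the remark following Theorem~\ref{elliptic_Neumann_sigma_k_general}) no curvature hypotheses on $\overline M$ are required. After dividing by $\rho$ we may assume $\rho>0$; put $F(u_{ij}):=a^{ij}u_{ij}/\rho$ and $\tilde\psi:=\psi/\rho$, so $F$ is linear and uniformly elliptic on $\Gamma=\mathcal{S}^{n\times n}\supset\Gamma_n$. Choose a smooth $u_0$ with $(u_0)_\nu=\phi$ on $\partial M$, for instance $u_0:=\phi\,d$ with $d$ as in~(\ref{dist2bd_function}), since $\nabla_\nu d=1$ and $d=0$ on $\partial M$. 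One checks directly that $\underline u(x,t):=u_0(x)$ is a (parabolic) $\mathcal C$-subsolution of $F(u_{ij})-u_t=\tilde\psi$: uniform ellipticity forces $a^{ij}A_{ij}/\rho\to+\infty$ along any ray $A=A_0+tP$ with $P\succ 0$, so the level sets in Definition~\ref{Def_C-sub_parabolic} are bounded. Then consider the IBV problem $u_t=F(u_{ij})-\tilde\psi$ in $M\times\{t>0\}$, $u_\nu=\phi$ on $\partial M$, $u=u_0$ at $t=0$; short-time existence and uniqueness are classical for linear uniformly parabolic equations with Neumann data, and long-time existence reduces to uniform-in-$T$ a priori estimates.

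For the estimates, Lemma~\ref{u_t_bounds} with $\psi_z\equiv 0$ and $\phi_z\equiv 0$ gives $\sup_{\overline M\times[0,\infty)}|u_t|\le\sup_{\overline M}|F((u_0)_{ij})-\tilde\psi|$, hence $F(u_{ij})=u_t+\tilde\psi\le C_1$ uniformly in $t$, and $u_0$ is a $\mathcal C$-subsolution of the elliptic equation $F(u_{ij})=C_1$. The function $u-u_0$ then has vanishing Neumann data, satisfies $a^{ij}(u-u_0)_{ij}\ge -C_0$ by uniform ellipticity and the bound on $u_t$, and admits $0$ as a $\mathcal C$-subsolution, so Theorem~\ref{osc_thm} yields $\sup_{t\ge 0}\text{osc}_{\overline M}\,u(\cdot,t)\le C$. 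Fixing $x_0\in M$ and setting $v:=u-u(x_0,t)$, we obtain uniform $C^0$ and $|v_t|$ bounds; the gradient estimate Theorem~\ref{grad_estimate} (whose proof goes through for linear uniformly elliptic $F$, using only $\sum_iF^{ii}\ge\gamma>0$, that $\sum_iF^{ii}U_{ii}$ is bounded, and $F^{ij,kl}=0$), or simply classical linear parabolic theory, gives a uniform $C^1$ bound; and interior and boundary Schauder estimates for linear parabolic equations with Neumann conditions then give uniform $C^{k,\alpha}\big(\overline M\times[\epsilon,\infty)\big)$ bounds for every $k$ and every $\epsilon>0$. In particular $\theta g^{ij}\le a^{ij}/\rho\le\theta^{-1}g^{ij}$ for some $\theta\in(0,1]$.

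To pass to the limit, note that $w:=u_t$ satisfies the homogeneous linear parabolic equation $w_t=(a^{ij}/\rho)\nabla_i\nabla_j w$ in $M$ with $w_\nu=0$ on $\partial M$, obtained by differentiating the equation and the boundary condition in $t$ (all of $a^{ij},\psi,\rho,\phi$ are $t$-independent). Hence, by the Harnack inequality of Theorem~\ref{Harnack_ineq_thm} combined with Cao's oscillation-decay argument~\cite{Cao85}, $u_t\to c$ uniformly for some constant $c$, so $v_t\to 0$ uniformly. The uniform $C^{2+\alpha}(\overline M)$ bound on $v$ produces a subsequence $t_n\to\infty$ with $v(\cdot,t_n)\to\tilde u$ in $C^{2+\alpha}(\overline M)$; letting $n\to\infty$ in the equation for $v$ shows $F(\tilde u_{ij})-\tilde\psi=c$, i.e.\ $\mathcal L\tilde u=\psi+c\rho$ in $M$ with $\tilde u_\nu=\phi$ on $\partial M$, and bootstrapping with Schauder estimates gives $\tilde u\in C^\infty(\overline M)$.

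Finally, uniqueness (hence uniqueness of $c$): if $u_1,u_2$ both solve~(\ref{eq_linear_elliptic_2}) with constants $c_1\ge c_2$, then $w:=u_1-u_2$ satisfies $a^{ij}w_{ij}=(c_1-c_2)\rho$ in $M$ with $w_\nu=0$ on $\partial M$; the weak maximum principle forces $w$ to attain its maximum on $\partial M$, the Hopf lemma then forces $w$ to be constant, and since $\rho>0$ this requires $c_1=c_2$. For $\mathcal L=\Delta_g$ the value of $c$ is read off by integrating $\Delta_g u=\psi+c\rho$ over $\overline M$: since $\nu$ is the \emph{inner} normal, the divergence theorem gives $\int_{\overline M}\Delta_g u\,dV=-\int_{\partial M}u_\nu\,dS=-\int_{\partial M}\phi\,dS$, whence $c=-\big(\int_{\overline M}\psi\,dV+\int_{\partial M}\phi\,dS\big)\big/\int_{\overline M}\rho\,dV$. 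The only substantive ingredient beyond soft linear theory is the uniform-in-time oscillation estimate of Theorem~\ref{osc_thm}, which is exactly what prevents the flow from drifting to infinity given that $u+\mathrm{const}$ is also a solution; I would expect this to be the conceptual heart of the argument, although here it is already available off the shelf.
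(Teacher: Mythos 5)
Your proposal is correct and follows essentially the same route as the paper: run the parabolic flow starting from the $\mathcal{C}$-subsolution $\underline{u}=\phi d$ (exactly the paper's choice), appeal to classical linear parabolic Schauder theory for a priori estimates in place of the nonlinear machinery, invoke Theorem~\ref{osc_thm} for the uniform oscillation bound, and use the Harnack inequality together with Cao's argument to pass to the limit, with uniqueness via the weak maximum principle and Hopf lemma. The identification of the constant $c$ when $\mathcal{L}=\Delta_g$ via the divergence theorem (with the inner-normal sign) is also exactly the paper's reasoning.
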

\begin{proof}
	The proof is similar to Theorem \ref{elliptic_Neumann_sigma_k_general}.
	The a priori estimates for linear equations with Neumann boundary condition are well known (see \cite{LSU68}).
	The admissible function $\underline{u} := \phi d \in C^{\infty}(\overline{M})$ with $\underline{u}_{\nu} = \phi(x)$ on $\partial M$ is a $\mathcal{C}$-subsolution for the elliptic equation $\mathcal{L} u = \psi(x)$, where $d$ is defined as in (\ref{dist2bd_function}) (see Proposition 2.2.8 in \cite{GuoThesis19}). 
\end{proof}

\begin{example}
	For any $\psi \in C^{\infty}\big( \overline{\mathbb{D}^{n}} \big)$, there exists a constant $c$ such that the Neumann boundary problem
	\begin{equation}
	\left\{
	\begin{aligned}
	&\det\big(u_{ij}\big) = e^{\psi(x) + c} \qquad &\text{in}&\; \mathbb{D}^{n}, \\ 
	&u_\nu = -1 \qquad &\text{on}& \; \mathbb{S}^{n-1},   
	\end{aligned}
	\right.
	\end{equation}
	has a unique smooth solution $u \in C^{\infty} \big(\overline{M} \big)$ up to a constant.
\end{example}
\begin{proof}
	We construct a $\mathcal{C}$-subsolution $\underline{u} := \frac{1}{2} |x|^2$ with $\underline{u}_\nu = -1$ on $\overline{\mathbb{D}^{n}}$, and the above conclusion follows immediately after Theorem \ref{elliptic_Neumann_sigma_k}. 
\end{proof}

\appendix
\section{Parabolic Harnack Inequalities}
\label{Harnack.app}

Harnack inequalities for Heat equations with Neumann boundary conditions on compact Riemannian manifolds with smooth boundary were studied by Li-Yau \cite{LiYau86}, Yau \cite{Yau95} and Chen \cite{ChenR00}.
In the following, we derive a Harnack inequality for general linear parabolic equations with vanishing Neumann boundary condition. The proof is motivated by Theorem 10 of Section 7.1 in \cite{EvansBk}.

\begin{theorem}\label{Harnack_ineq_thm}
	Let $(\overline{M}, g)$ be a smooth connected Riemannian manifolds with bounded sectional curvatures $|\text{Sec}| \leq K $ in $\overline{M}$ and bounded principal curvatures $|I\!\!I| \leq H$ on $\partial M$. Let $u \in C^{2, 1}(\overline{M_T})$ be the nonnegative solution of the following linear uniformly parabolic equation with Neumann boundary condition
	\begin{equation}\label{eq_Harnack}
	\left\{
	\begin{aligned}
	u_t &= a^{ij}u_{ij} \qquad &\text{in}\; M_T, \\ 
	u_\nu &= 0 \qquad &\text{on} \; \partial M,   \\
	u &\geq 0 \qquad &\text{in} \; \overline{M_T},
	\end{aligned}
	\right.
	\end{equation}
	where $a^{ij} \in C^{2, 1}(\overline{M_T})$ and $\theta( g^{ij}) \leq (a^{ij}) \leq \frac{1}{\theta} (g^{ij})$ with $0 < \theta \leq 1$. 
	Let $\nu := \nabla d$ in $\overline{M}$ with $d$ defined in (\ref{dist2bd_function}).
	Then there exists a constant $\gamma > 1$ such that for $0 < t_1 < t_2 \leq T$,
	\begin{equation}\label{Harnack's_ineq}
	\sup_{\overline{M}} u(\cdot, t_1) \leq \gamma \inf_{\overline{M}} u(\cdot, t_2),
	\end{equation}
	where $\gamma$ depends on $\theta$, $t_1, t_2$, $K, H$,  $\text{diam}(\overline{M})$, $||\nabla_\nu \nu||_{C^0(\partial M)}$, $||a^{ij}||_{C^{2, 1}(\overline{M_T})}$,  $||d||_{C^2(\overline{M})}$.
\end{theorem}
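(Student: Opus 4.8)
The plan is to deduce the global inequality (\ref{Harnack's_ineq}) from finitely many applications of a \emph{local} parabolic Harnack inequality in coordinate cylinders, chained together using the compactness and connectedness of $\overline{M}$; cylinders meeting $\partial M$ are handled by the even-reflection device already employed in Theorem~\ref{weak_harnack_bd_thm} and Proposition~\ref{B_c,1_prop}, which turns the vanishing Neumann condition into a (no longer smooth) interior parabolic equation on a full ball.

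First I would record a monotonicity reduction. Since the equation $u_t=a^{ij}u_{ij}$ has no zeroth-order term, constants are solutions; at an interior spatial maximum of $u(\cdot,t)$ one has $(u_{ij})\le 0$ and hence $a^{ij}u_{ij}\le 0$, while a boundary maximum is ruled out by Hopf's lemma because $u_\nu=0$. Hence, by the strong maximum principle and Hopf's lemma applied to $u$ minus the relevant constant, $t\mapsto\max_{\overline M}u(\cdot,t)$ is non-increasing and $t\mapsto\min_{\overline M}u(\cdot,t)$ is non-decreasing. It therefore suffices to fix one pair $t_1\le s<s'\le t_2$ that is bounded away from $0$ and from each other, say $s=\tfrac{2t_1+t_2}{3}$ and $s'=\tfrac{t_1+2t_2}{3}$, and to produce a constant $\gamma$ with $\max_{\overline M}u(\cdot,s)\le\gamma\min_{\overline M}u(\cdot,s')$; this removes any difficulty due to $t_1$ being small, since all parabolic cylinders used below will lie in $[t_1,t_2]\subset(0,T]$.

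Next I would set up the local ingredient. Cover $\overline M$ by finitely many coordinate charts, each identified with a ball $B_2(0)$ (interior charts) or with a half-ball $B_2^+(0)$ in which $\partial M$ corresponds to $\{y_n=0\}$ and $\partial/\partial y_n$ is proportional to $\nu$ there, exactly as in the proof of Theorem~\ref{osc_thm}; then $u_\nu=0$ reads $\partial_{y_n}u=0$ on $\{y_n=0\}$, so the even extension $\tilde u(y',y_n):=u(y',|y_n|)$ is a nonnegative $C^{2,1}$ solution on $B_2(0)$ of a parabolic equation with reflected coefficients $\tilde a^{ij}$, whose off-diagonal $(i,n)$-entries are merely bounded and discontinuous across $\{y_n=0\}$ but for which $\tilde a^{ij}\tilde u_{ij}$ remains well defined since $\tilde u_{in}=0$ on $\{y_n=0\}$ (as in Theorem~\ref{weak_harnack_bd_thm}). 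Applying the interior parabolic Harnack inequality of \cite{EvansBk} — equivalently its two halves, the weak Harnack for $u$ as a supersolution and the local sup estimate for $u$ as a subsolution, run through the reflected equation on $B_2(0)$ exactly as in the proof of Theorem~\ref{weak_harnack_bd_thm} — and pulling back, I obtain constants $r_0>0$ and $\gamma_0>1$, depending only on $\theta$, $||a^{ij}||_{C^{2,1}}$ and the stated geometric data, such that for every $x_0\in\overline M$, every $0<r\le r_0$ and every $\tau$ with $\tau-4r^2\ge t_1$, one has $u(x_0,\tau-3r^2)\le\gamma_0\,u(y,\tau)$ for all $y$ in the chart-ball $B_r(x_0)$.

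Finally I would run the Harnack chain. By a Lebesgue-number argument for the finite cover, connectedness of $\overline M$ gives a bound $m_0$, depending only on $\text{diam}(\overline M)$ and the cover (hence on $K$, $H$ and $||d||_{C^2}$), on the number of chart-balls of radius $\sim r_0$ needed to join any two points $x,y\in\overline M$ by a chain $x=z_0,\dots,z_{m_0}=y$ with consecutive points in a common such ball. Partitioning $[s,s']$ into $N=N\big(m_0,(s'-s)/r_0^{2}\big)$ equal time-steps of size at least $3r^{2}$ for a suitable $r\le r_0$, and applying the local inequality of the previous paragraph along the spatial chain while advancing one time-step at a time (staying spatially fixed when only time advances), one gets $u(x,s)\le\gamma_0^{\,N}u(y,s')$; taking $\sup$ in $x$ and $\inf$ in $y$ and invoking the monotonicity reduction yields (\ref{Harnack's_ineq}) with $\gamma=\gamma_0^{\,N}$, whose dependence is as claimed. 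I expect the main obstacle to be the boundary analysis: one must check that the interior parabolic Harnack machinery — both the supersolution (weak-Harnack) half and the subsolution (local-maximum) half — genuinely applies to the reflected equation despite the discontinuity of $\tilde a^{in}$ across $\{y_n=0\}$; this works precisely because those entries multiply $\tilde u_{in}$, which vanishes on the reflection hyperplane, so one proceeds as in the proof of Theorem~\ref{weak_harnack_bd_thm} (mollifying $\tilde a^{ij}$ and passing to the limit, or reading $\tilde u$ as a strong solution with bounded measurable coefficients) rather than quoting the Harnack inequality as a black box.
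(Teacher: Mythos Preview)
Your approach is genuinely different from the paper's. The paper proves Theorem~\ref{Harnack_ineq_thm} by a Li--Yau type gradient estimate: setting $v=\log(u+\epsilon_0)$, $w=a^{ij}v_{ij}$, $\tilde w=a^{ij}v_iv_j$, it runs a maximum-principle argument on an auxiliary function $Z=\zeta(t)^2\big(w+(\kappa-Ad)\tilde w\big)+\mu t$ (the time cutoff $\zeta$ handles small $t_1$, and the factor $\kappa-Ad$ absorbs the Neumann condition via Hopf's lemma on $\partial M$) to obtain the differential Harnack inequality $v_t\ge c\,|\nabla v|^2-\mu t$ on $\overline M\times[t_1,t_2]$, and then integrates this along a path from $(x_1,t_1)$ to $(x_2,t_2)$. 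Your route---local parabolic Harnack in coordinate cylinders, even reflection across $\partial M$ as in Theorem~\ref{weak_harnack_bd_thm}, and a Harnack chain---is viable and more modular, relying on Krylov--Safonov; the paper's approach is self-contained, yields an explicit form of $\gamma$, and actually produces the stronger differential inequality.

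That said, your monotonicity reduction is stated backwards and does not do what you claim. The monotonicity gives $\sup_{\overline M}u(\cdot,t_1)\ge\sup_{\overline M}u(\cdot,s)$ and $\inf_{\overline M}u(\cdot,s')\le\inf_{\overline M}u(\cdot,t_2)$, so from $\sup u(\cdot,s)\le\gamma\inf u(\cdot,s')$ you can conclude $\sup u(\cdot,s)\le\gamma\inf u(\cdot,t_2)$, but \emph{not} $\sup u(\cdot,t_1)\le\gamma\inf u(\cdot,t_2)$; the inequality you need goes the wrong way on the $\sup$ side. The simplest fix is to drop this paragraph entirely and run the chain directly from level $t_1$ to level $t_2$: choose $r\le r_0$ small enough that the earliest cylinder (reaching back to roughly $t_1-r^2$) stays in $(0,T]$, i.e.\ $r^2\lesssim t_1$. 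Since the statement allows $\gamma$ to depend on $t_1$ and $t_2$, the resulting dependence of the step count $N$ on $t_1$ is harmless. With that correction your argument goes through, provided---as you correctly flag---you invoke the Krylov--Safonov parabolic Harnack inequality for strong solutions with bounded measurable coefficients rather than the smooth-coefficient version in \cite{EvansBk}, since the reflected $\tilde a^{in}$ are discontinuous across $\{y_n=0\}$ even though $\tilde u$ itself remains $C^{2,1}$.
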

\begin{remark}
	If $a^{ij} = g^{ij}$, then $\gamma$ depends on the lower bounds of Ricci curvatures $R_{ij} \geq -\underline{K} g_{ij} \quad (\underline{K} \geq 0)$ and the lower bounds of principal curvatures $I\!\!I \geq -\underline{H} \quad (\underline{H} \geq 0)$ instead of $K$ and $H$, and it is independent of $||\nabla_\nu \nu||_{C^0(\partial M)}$. This case was studied by Chen \cite{ChenR00}.
\end{remark}

\begin{proof}
	Let $$v:= \log (u + \epsilon_0)$$ with $\epsilon_0 > 0$. Then by the parabolic equation (\ref{eq_Harnack}), we have
	\begin{equation}\label{v_t}
	v_t = w + \tilde{w},
	\end{equation}
	where 
	\begin{equation}\label{Harn_w&tilde_w}
	w := a^{ij}v_{ij}, \qquad \tilde{w} := a^{ij} v_i v_j.
	\end{equation}

	Fix $x \in \overline{M}$, and choose an orthonormal frame $\{e_i\}$ near $x$ such that $\nabla_{e_i}e_j (x) = 0$ for $1 \leq i, j \leq n$.
	By (\ref{Harn_w&tilde_w}) and (\ref{ijk-ikj}),
	\begin{equation}\label{Harn_ineq_1}
	\begin{aligned}
	a^{kl} \tilde{w}_{kl}
	=& - b^k a^{ij} v_{ijk} + 2 a^{kl} a^{ij} R^m_{kil} v_m v_j + 2 a^{kl} a^{ij} v_{ik} v_{jl} \\
	&+ 4 a^{kl} a^{ij}_k v_{il} v_j + a^{kl} a^{ij}_{kl} v_i v_j,
	\end{aligned}	
	\end{equation}
	where
	\begin{equation}\label{Harn_ineq_2}
	b^k := -2 a^{kl} v_l.
	\end{equation}
	Since $a^{ij}v_{ijk} = w_k - a^{ij}_k v_{ij}$, then
	by (\ref{v_t}) (\ref{Harn_w&tilde_w}) (\ref{Harn_ineq_1}) and Cauchy-Schwarz inequality,
	\begin{equation}\label{w_t-Lw}
	\begin{aligned}
	w_t - a^{kl} w_{kl} + b^k w_k &= a^{kl}_t v_{kl} + a^{kl} v_{klt} - a^{kl} w_{kl} + b^k w_k \\
	&\geq 2\theta^2 |\nabla^2 v|^2 - \epsilon|\nabla^2 v|^2 - \frac{C}{\epsilon} |\nabla v|^2 - \frac{C}{\epsilon} + 2 a^{kl} a^{ij} R^m_{kil} v_m v_j\\
	&\geq \frac{3}{2}\theta^2 |\nabla^2 v|^2 - C |\nabla v|^2 - C + 2 a^{kl} a^{ij} R^m_{kil} v_m v_j,
	\end{aligned}
	\end{equation}
	where we choose $\epsilon = \frac{ \theta^2}{2}$.
	By (\ref{v_t}) (\ref{Harn_w&tilde_w}) (\ref{Harn_ineq_1}) and Cauchy-Schwarz inequality,
	\begin{equation}\label{tilde_w_t-Lw}
	\begin{aligned}
	\tilde{w}_t - a^{kl} \tilde{w}_{kl} + b^k \tilde{w}_k 
	&\geq - 2 a^{ij} a^{kl} v_{ik} v_{jl} - \epsilon |\nabla^2 v|^2 - \frac{C}{\epsilon} |\nabla v|^2 - 2 a^{kl} a^{ij} R^m_{kil} v_m v_j\\
	&\geq - (\frac{2}{\theta^2} + \epsilon) |\nabla^2 v|^2 - \frac{C}{\epsilon} |\nabla v|^2 - 2 a^{kl} a^{ij} R^m_{kil} v_m v_j\\
	&\geq - \frac{4}{\theta^2} |\nabla^2 v|^2 - C |\nabla v|^2 - 2 a^{kl} a^{ij} R^m_{kil} v_m v_j,
	\end{aligned}
	\end{equation}
	where we choose $\epsilon = \frac{2}{\theta^2}$.

	To deal with the term $2 a^{kl} a^{ij} R^m_{kil} v_m v_j$ above, we can rearrange the orthonormal frame $\{e_i\}$ so that $a^{ij}$ is diagonalized at $x$. Since $|\text{Sec}| \leq K$, then $O \leq (R_{mkik} + K \delta_{mi}) \leq 2K(\delta_{mi})$\footnote{Fix $k$, $(M_{mi}) := (R_{mkik})$ is a symmetric $n\times n$ matrix.} for each $k$, and therefore by Cauchy-Schwarz inequality,
	\begin{equation}\label{sec_curv_term_1}
	\begin{aligned}
	&\qquad 2 a^{kl} a^{ij} R^m_{kil} v_m v_j
	= 2 \sum_{i, m, k} a^{kk} a^{ii} R_{mkik} v_m v_i \\
	&= 2 \sum_{i, m, k} a^{kk} a^{ii} (R_{mkik} + K \delta_{mi}) v_m v_i -2 \sum_{i, k} a^{kk} a^{ii}  K v_i v_i \\
	&\geq - \sum_{i, m, k} a^{kk} (R_{mkik} + K \delta_{mi})[(a^{mm}v_m)(a^{ii} v_i) + v_m v_i] - \frac{2nK}{\theta^2} |\nabla v|^2 \\
	& \geq - \frac{4nK}{\theta^3} |\nabla v|^2 - \frac{2nK}{\theta^2} |\nabla v|^2.
	\end{aligned}
	\end{equation}
	
	Let $$W := w + (\kappa - Ad)\tilde{w},$$ 
	where $\kappa \leq 1$ (small) and $A$ (large) are positive constants to be determined later. 
	We can choose $\delta_0$ small enough (recall that $0 \leq d \leq \delta_0$) so that $\kappa - Ad \geq \frac{\kappa}{2}$. Then by (\ref{w_t-Lw}), (\ref{tilde_w_t-Lw}) and (\ref{sec_curv_term_1}),
	\begin{equation}\label{Cap_w_t-Lw}
	\begin{aligned}
	&\qquad W_t - a^{kl} W_{kl} + b^k W_k \\
	&= (w_t - a^{kl} w_{kl} + b^k w_k) + (\kappa - Ad) (\tilde{w}_t - a^{kl} \tilde{w}_{kl} + b^k \tilde{w}_k) \\
	& \quad + a^{kl} (2 A d_k \tilde{w}_l + A d_{kl} \tilde{w}) -A b^k d_k \tilde{w} \\
	&\geq \Big(\frac{3}{2}\theta^2 |\nabla^2 v|^2 - C |\nabla v|^2 - C\Big) + \kappa \Big(- \frac{4}{\theta^2} |\nabla^2 v|^2 - C |\nabla v|^2\Big) \\
	&\quad - (\epsilon|\nabla^2 v|^2 + \frac{C}{\epsilon} |\nabla v|^2 + C |\nabla v|^3) + 2 [1 - (\kappa - Ad)] a^{kl} a^{ij} R^m_{kil} v_m v_j\\
	& \geq \theta^2 |\nabla^2 v|^2 - C|\nabla v|^3 - C |\nabla v|^2 - C,
	\end{aligned}
	\end{equation}
	where we choose $\kappa \leq \frac{\theta^4}{16}$ and $\epsilon = \frac{\theta^2}{4}$.
	
	Let $$Z := \zeta^2 W + \mu t,$$
	where $\mu$ is a large positive constant to be determined later and $\zeta(t)$ a smooth increasing function on $[0, T]$ satisfying $\zeta(0) = 0$ and $\zeta(t) = 1$ for $t \geq t_1$. 	
	By (\ref{Cap_w_t-Lw}),
	\begin{equation}\label{Z_t-LZ}
	\begin{aligned}
	Z_t - a^{kl} Z_{kl} + b^k Z_k &= \zeta^2 (W_t - a^{kl} W_{kl} + b^k W_k) + 2 \zeta \zeta_t W + \mu \\
	& \geq \zeta^2 \big(\theta^2 |\nabla^2 v|^2 - C|\nabla v|^3 - C |\nabla v|^2 - C\big) - C \zeta |W| + \mu.
	\end{aligned}
	\end{equation}
	
	Suppose $Z$ attains a minimum at $(x_0, t_0) \in \overline{M_T}$ with $Z(x_0, t_0) < 0$,
	then $W(x_0, t_0) < 0$ and $t_0 > 0$. 
	Assume $x_0 \in M$.
	Since $\kappa - Ad \geq \frac{\kappa}{2}$ and $\tilde{w} \geq 0$, then $w + \frac{\kappa}{2}\tilde{w} \leq W <0$.
	By (\ref{Harn_w&tilde_w}),
	\begin{equation}\label{Harn_ineq_5}
	\theta|\nabla v|^2 \leq \tilde{w} \leq \frac{-2w}{\kappa} \leq \frac{2\sqrt{n}}{\kappa \theta} |\nabla^2 v|,
	\end{equation}
	and hence,
	\begin{equation}\label{Harn_ineq_6}
	|W| \leq |w| + \kappa |\tilde{w}| \leq \frac{\sqrt{n}}{\theta} |\nabla^2 v| + \frac{\kappa}{\theta} |\nabla v|^2 \leq \frac{3\sqrt{n}}{\theta^3} |\nabla^2 v|,
	\end{equation}
	By Cauchy-Schwarz inequality,
	\begin{equation}\label{Harn_ineq_7}
	C\zeta |W| \leq C \zeta |\nabla^2 v| \leq \epsilon \zeta^2 |\nabla^2 v|^2 + \frac{C}{\epsilon}.
	\end{equation}
	By (\ref{Z_t-LZ}), (\ref{Harn_ineq_5}), (\ref{Harn_ineq_7}) with $\epsilon = \frac{\theta^2}{2}$, and $0 \leq \zeta \leq 1$, we have
	\begin{equation}\label{Z_t-LZ&Z<0}
	\begin{aligned}
	Z_t - a^{kl} Z_{kl} + b^k Z_k 
	&\geq \zeta^2 \Big(\frac{\theta^2}{2} |\nabla^2 v|^2 - C|\nabla v|^3 - C |\nabla v|^2\Big) - C_1' + \mu \\
	&\geq \zeta^2 \Big(\frac{\kappa^2 \theta^6}{8n} |\nabla v|^4 - C|\nabla v|^3 - C |\nabla v|^2\Big) - C_1' + \mu \\
	&\geq - C_1 - C_1' + \mu >0,
	\end{aligned}
	\end{equation}
	where $\frac{\kappa^2 \theta^6}{8n} |\nabla v|^4 - C|\nabla v|^3 - C |\nabla v|^2 \geq - C_1$ with $C_1$ independent of $|\nabla v|$, and $\mu \geq C_1 + C_1' + 1$.
	(\ref{Z_t-LZ&Z<0}) contradicts to $(x_0, t_0)$ being a minimum point of $Z$. Hence, $(x_0, t_0) \in \partial M \times (0, T]$. Since $Z < 0$ holds near $(x_0, t_0)$, then (\ref{Z_t-LZ&Z<0}) holds near $(x_0, t_0)$, and by strong maximum principle,
	\begin{equation}\label{strong_max}
	Z_\nu (x_0, t_0) > 0.
	\end{equation}
	On $\partial M$, since $v_\nu = 0$, and $v_{\nu t} = w_\nu + \tilde{w}_\nu = 0$, then 
	\begin{equation}
	\begin{aligned}
	Z_\nu &= \zeta^2 \big(w_\nu + (\kappa - Ad)\tilde{w}_\nu - A\tilde{w}\big) \\
	&= \zeta^2 \big((\kappa - Ad - 1)\tilde{w}_\nu - A\tilde{w}\big)\\
	&\leq \zeta^2 (|\tilde{w}_\nu| - A\theta |\nabla v|^2).
	\end{aligned}
	\end{equation}
	We claim that $|\tilde{w}_\nu| \leq C_2 |\nabla v|^2$ on $\partial M \times (0, T]$, where the constant $C_2$ is independent of $\delta_0$. If we choose $A$ large enough so that $A\theta \geq C_2$, then on $\partial M \times (0, T]$, we have $Z_\nu \leq 0,$
	which contradicts to (\ref{strong_max}).
	Therefore, $Z \geq 0$ in $\overline{M_T}$.
	
	Now we prove the claim above. Since $\tilde{w}_\nu = a^{ij}_\nu v_i v_j + 2a^{ij} v_{i\nu} v_j$, it suffices to prove that $|a^{ij} v_{i\nu} v_j| \leq C_2' |\nabla v|^2$.
	Let $(a_{ij})$ be the inverse matrix of $(a^{ij})$. Then $(a_{ij})$ is a Riemannian metric on $\overline{M}$ with connection $\tilde{\nabla}$ and Christoffel symbols $\tilde{\Gamma}^k_{ij}$. We choose a local orthonormal frame $\{ e_\alpha \}$ under the metric $(a_{\alpha\beta})$, i.e., $(a_{\alpha\beta}) = (\delta_{\alpha\beta})$, with $e_n = \frac{\nu}{\sqrt{a(\nu, \nu)}}$. Since $v_n = 0$ on $\partial M$, then 
	\begin{equation}
	\begin{aligned}
	a^{ij} v_j \tilde{\nabla}^2_{i\nu} v &= \sqrt{a(\nu, \nu)} \sum_{\alpha < n} v_\alpha \tilde{\nabla}^2_{\alpha n} v\\
	&= \sqrt{a(\nu, \nu)} \sum_{\alpha, \beta < n} v_\alpha [e_\alpha (v_n) - \tilde{\Gamma}^\beta_{\alpha n} v_\beta]\\
	&= - \sqrt{a(\nu, \nu)} \sum_{\alpha, \beta < n} \tilde{\Gamma}^\beta_{\alpha n} v_\alpha v_\beta\\
	&= - a^{\alpha\sigma}\tilde{\Gamma}^{\beta}_{\sigma\nu}v_\alpha v_\beta,
	\end{aligned}
	\end{equation}
	\footnote{Under the orthonoraml frame $\{e_\alpha\}$, $\tilde{\Gamma}_{\sigma\nu}^\beta = a(\nabla_\sigma \nu, e_\beta) = a\Big( \sqrt{a(\nu, \nu)}\, \nabla_\sigma e_n, e_\beta \Big) + a\Big( e_\sigma\big(\sqrt{a(\nu, \nu)}\big)\, e_n, e_\beta\Big) = \sqrt{a(\nu, \nu)}\, \tilde{\Gamma}_{\sigma n}^\beta$.}
	and
	\begin{equation}
	\begin{aligned}
	a^{ij}v_{i\nu}v_j &= a^{ij}(\nabla^2_{i\nu}v - \tilde{\nabla}^2_{i\nu}v) v_j + a^{ij} v_j \tilde{\nabla}^2_{i\nu} v \\
	& = a^{ij}(- \Gamma_{i\nu}^k + \tilde{\Gamma}_{i\nu}^k) v_k v_j + a^{ij} v_j \tilde{\nabla}^2_{i\nu} v\\
	&= - a^{ij} \Gamma_{i\nu}^k v_k v_j,
	\end{aligned}
	\end{equation}
	and hence $|a^{ij} v_{i\nu} v_j| \leq C_2' |\nabla v|^2$. Here $\Gamma_{i\nu}^k$ is bounded by $H$ and $\sup_{\partial M}||\nabla_\nu \nu||_g$. We have proved the claim above.
	
	In $\overline{M} \times [t_1, t_2]$, since $Z \geq 0$ and $\zeta = 1$, that is, $w + (\kappa - Ad)\tilde{w} + \mu t \geq 0$, then
	\begin{equation}\label{Harn_ineq_8}
	v_t = w + \tilde{w} \geq [1 - (\kappa - Ad)] \tilde{w} - \mu t \geq (1 - \frac{\kappa}{2})\theta |\nabla v|^2 - \mu t.
	\end{equation}
	
	For any $x_1, x_2 \in \overline{M}$, and any smooth path $l : [0, 1] \rightarrow \overline{M}$ with $l(0) = x_1, l(1) = x_2$, let $\eta(s) := \big(l(s), (1-s) t_1 + s t_2\big)$ be a smooth path in $\overline{M} \times [t_1, t_2]$, by (\ref{Harn_ineq_8}),
	\begin{equation}
	\begin{aligned}
	& \qquad v(x_2, t_2) - v(x_1, t_1)\\
	&= \int_{0}^{1} \frac{d}{ds} v\big(\eta(s)\big) ds \\
	&= \int_{0}^{1} g(\dot{l}, \nabla v) + (t_2 - t_1) v_t \; ds\\
	&\geq \int_{0}^{1} -|\dot{l}|\,|\nabla v| + (t_2 - t_1) \left[(1 - \frac{\kappa}{2})\theta |\nabla v|^2 - \mu \big( (1-s) t_1 + s t_2 \big) \right] \; ds \\
	&\geq - \frac{\int_{0}^{1} |\dot{l}|^2 ds}{4(1 - \frac{\kappa}{2})\theta (t_2 - t_1)} - \frac{1}{2}(t_2^2 - t_1^2)\mu.
	\end{aligned}
	\end{equation}
	Consequently,
	\begin{equation}
	\begin{aligned}
	\log \frac{u(x_2, t_2) + \epsilon_0}{u(x_1, t_1) + \epsilon_0} &\geq - \frac{r^2(x_1, x_2)}{4(1 - \frac{\kappa}{2})\theta (t_2 - t_1)} - \frac{1}{2}(t_2^2 - t_1^2)\mu\\
	&\geq - \frac{\big(\text{diag}(\overline{M})\big)^2}{4(1 - \frac{\kappa}{2})\theta (t_2 - t_1)} - \frac{1}{2}(t_2^2 - t_1^2)\mu,
	\end{aligned}
	\end{equation}
	and (\ref{Harnack's_ineq}) follows immediately when we take $\epsilon_0 \rightarrow 0$.
\end{proof}

\bigskip

\end{document}